\documentclass[12pt]{amsart}

\usepackage{amsfonts,amssymb,amsmath,amsthm,mathtools,mathabx,bm, leftindex}
\usepackage{url}
\usepackage{enumerate}
\usepackage{stmaryrd}
\usepackage{bbm}




\theoremstyle{plain}
\newtheorem{theorem}{Theorem}[section]
\newtheorem*{theorem*}{Theorem}
\newtheorem{lemma}[theorem]{Lemma}
\newtheorem{corollary}[theorem]{\bf Corollary}
\newtheorem{remark}[theorem]{\bf Remark}
\newtheorem{example}[theorem]{\bf Example}
\newtheorem{proposition}[theorem]{\bf Proposition}

\newcommand \Val{{\rm{CoV}}}

\newcommand \Mod{\rm{Mod}}
\newcommand \tp{\rm{tp}}

\newcommand \sqsp{\sqsupset}

\newcommand{\sqni}{\mathrel{\vphantom{\sqsupset}\text{\mathsurround=0pt\ooalign{$\sqsupset$\cr$-$\cr}}}}

\newcommand {\cl}[1]{{\rm{cl}(#1)}}

\newcommand \Sp{{\mathsf{S}}}

\newcommand \fra{Fra\"{i}ss\'{e}}
\newcommand \uhr{\upharpoonright}
\newcommand \fr{\scriptscriptstyle{f}}
\newcommand \re{\scriptstyle{r}}

\newcommand \BBf{\mathbf{B}}

\newcommand \FFf{\mathbf{F}}

\newcommand \IIf{\mathbf{I}}
\newcommand \KKf{\mathbf{K}}
\newcommand \LLf{\mathbf{L}}
\newcommand \PPf{\mathbf{P}}
\newcommand \SSf{\mathbf{S}}

\newcommand \KKb{\mathbb{K}}
\newcommand \NN{\mathbb{N}}

\newcommand \PPb{\mathbb{P}}
\newcommand \RRb{\mathbb{R}}

\DeclareMathOperator{\Cov}{\mathbf{Cov}}

\mathtoolsset{showonlyrefs} 

\title{A logic of co-valuations}

\author[M. Malicki]{Maciej Malicki}

\address{Faculty of Mathematics, Informatics and Mechanics of the University of Warsaw, ul. Banacha 2, Warsaw, Poland}
\email{mmalicki@mimuw.edu.pl}
\date{\today}

\thanks{Research was partially supported by the National Science Centre, Poland under the Weave-UNISONO call in the Weave programme [grant no 2021/03/Y/ST1/00072].}

\begin{document}
	\maketitle
	
\begin{abstract}
A co-valuation is, essentially, a minimal finite cover. We introduce a logic based on co-valuations, which play the role of valuations of free variables in classical first-order logic, and show that the fundamental tools of model theory -- such as ultraproducts, compactness, and omitting types -- can be developed in this setup. Using a recently discovered duality between certain countable posets and second-countable compact $T_1$ spaces, we show that these spaces are counterparts of countable universes in first-order logic. Thus, although no topology appears in the initial formulation, the logic of co-valuations turns out to be naturally suited for studying compact topological objects. Standard topological notions, such as connectedness and covering dimension, are easily expressible, and model-theoretic properties, such as atomicity, can be effectively analyzed. The framework also interacts well with \fra-type constructions.
\end{abstract}
	
	\newcommand{\refiningvia}{\pi_1``}

\section{Introduction}
A model-theoretic structure is a set equipped with relations and functions. Despite potential complexity, such structures are trivial from a topological perspective: the only topology they are naturally equipped with is the discrete one. To incorporate topology into logic, two main strategies have been considered.

The first apporach  is point-free topology, which ignores the points of the underlying space and focuses on algebraic structures derived from the topology, such as collections of open sets with operations like union, intersection, and closure. Early examples include Tarski’s work \cite{Ta}, which interprets intuitionistic logic via Heyting algebras of open sets; Tarski also studied algebras of closed sets and, together with MacKinsey \cite{MKTa}, closure algebras. Later, Henson et al. \cite{Heatal} investigated lattices of closed sets of topological spaces.

The second one retains the points but introduces an additional sort to capture the topology. Examples include the works of Grzegorczyk \cite{Gr}, Gurevich \cite{Gu}, and Flum-Ziegler \cite{FlZi}, where suitable fragments of second-order monadic logic are used. An entirely different direction is to study definable topologies, as proposed by Pillay \cite{Pi}.

Both strategies face inherent limitations. Algebraic structures derived from topologies are generally not axiomatizable, and two-sorted structures, where one sort represents the topology, are similarly resistant to axiomatization. As a consequence, the resulting logics often lack desirable properties: they may fail compactness, the downward L\"owenheim-Skolem theorem does not hold, etc.

One partial remedy is to restrict attention to subfamilies of topologies, such as topological bases. However, this is not a complete solution. A single space may admit multiple bases that are not elementarily equivalent. Expressibility is also limited: if one considers a sort for a basis, the downward L\"owenheim-Skolem theorem holds, but only yields countable topological spaces. This severely restricts the ability to express fundamental topological properties such as connectedness or dimension -- for instance, every countable metrizable space is homeomorphic to a subset of the rational numbers.

 One can also focus on specific classes of spaces or alternative structures derived from a topological space. Bankston \cite{Ba} considered regular compact spaces, which are equivalent, via Wallman duality \cite{Wa}, to disjunctive normal lattices.  Nevertheless, the choice of a “right” basis remains problematic, and expressibility continues to depend heavily on the basis. On the other hand, Henson et al. \cite{Heatal} investigated also the ring $C(X)$ of real-valued functions on a topological space $X$. Similarly, Eagle et al. \cite{EaGoVi} associate a Hausdorff compact space $X$ with its $C^*$-algebra $C(X)$, and apply continuous logic. This is a promising research direction, however it seems restricted to spaces without any additional structure: what would be the $C^*$-algebra associated with a compact graph?

In this study, we take a different approach. The starting point is the notion of a co-valaution on a set $M$, which plays the role of a valuation in first-order logic -- that is, an interpretation of free variables. Specifically, a co-valuation is a relation $v \subseteq X \times M$, where $X$ is a finite set (called a context) such that, for $[x]_v=\{a \in M: (x,a) \in v\}$, the family $[v]=\{[x]_v: x \in X\}$ is a minimal  cover of $M$. A fundamental relation between co-valuations is a consolidation: a covaluation $v \subseteq X \times Y$ is a consolidation of $w \subseteq Y \times M$ -- and $w$ is a fragmentation of $v$ -- if $[v]$ is refined by $[w]$, and every set $[x]_v$ is a union of elements of the form $[y]_w$. The associated consolidation  pattern is the relation $\geq \subseteq X \times Y$, defined by $x \geq y$ iff $[x]_v \supseteq [y]_w$, which describes how the consolidation is carried out. A structure $(M,V)$ in the logic of co-valuations is an ordinary first-order structure $M$ together with a family $V$ of co-valuations on the universe of $M$ in contexts drawn from a fixed set of variables,  satisfying certain additional requirements. 
Co-valuations in $V$ are called admissible.

The (relational) language of this logic of co-valuationsis is the same as in first-order logic, except that quantifiers over contexts must explicitly specify their required consolidation pattern. The satisfaction relation is defined as follows. For a structure $(M,V)$, admissible co-valuation $v \subseteq X \times M$, relation symbol $R$, and $x_1, \ldots, x_m \in X$, $M \models_v R(x_1, \ldots, x_m)$ if there exist witnesses $a_i \in [x_i]_v$ such that $R^M(a_1, \ldots, a_m)$. On the other hand, $M \models_v \exists^{\fr}_\geq \phi$ if there exists an admissible $w$ that fragments $v$ with pattern $\geq$, and $M \models_w \phi$. The connectives $\neg$ and $\wedge$ are interpreted as in first-order logic.

This logic behaves well with respect to ultraproducts: it satisfies \L{}o\'s's theorem (Theorem \ref{th:LS}), and therefore enjoys compactness (Corollary \ref{co:Compactness}). Moreover, with an aid of a duality recently discovered by Bartoš, Bice, and Vignati \cite{BaBiVi} between certain countable posets, called $\omega$-posets, and second-countable compact $T_1$ spaces, for every structure, there exists an elementarily equivalent one whose admissible co-valuations yield a second-countable  compact $T_1$ topology. We call such structures compact $\omega$-structures. Thus, a topological version of the downward L\" owneheim-Skolem theorem holds (Theorem \ref{th:LS} ), even though no topology is present in the original setup. Furthemore, the omitting types theorem can be proved (Theorem \ref{th:OM}), using Baire-category methods in the Polish space $\Mod(L)$ of all compact $\omega$-structures in signature $L$ endowed with the logic topology. This leads to a natural notion of atomicity for compact $\omega$-structures (Theorem \ref{th:UniqueAtomic}).
 
One can show that every compact $T_1$ space equipped with closed relations can be naturally presented as a structure in our logic of co-valuations (Corollary \ref{co:CompactAreStructures}). Since admissible co-valuations form only a basis for the underlying topology, not all topological properties are expressible in the logic. Nevertheless, because in fact we work with covers coming from this basis, rather than the basis itself, notions such as connectedeness, perfectness, or covering dimension are expressible; some others, such as metrizability or chainability follow from omitting a suitable family of types (Section 4.4).  

The fact that admissible co-valuations give only a basis is a limitation. However, for many classical topological objects there is a canonical choice of admissible co-valuations. Specifically, this occurs for second-countable compact $T_1$ spaces (possibly with additional relations) that arise as \fra \ limits in a variant of \fra \ theory suitable for this setting (Section 6). When admissible co-valuations are obtained via a sufficiently nice ($L$-faithful)  \fra \  sequence in an appropriate category, the resulting structure is an atomic model of its theory (Theorem \ref{th:FraAtomic}), and any two such structures are isomorphic (Theorem \ref{th:FraAreIso}). This applies, for example, to the arc, the ordered arc, or the pseudo-arc. Thus, for compact objects arising as \fra \ limits, there is a canonical way to select the “correct’’ structure that presents it in the logic of co-valuations.


The paper is organized as follows. Section 3 provides a brief overview of the Barto\v s-Bice-Vignati duality between $\omega$-posets and second- countable compact $T_1$ spaces as well as some other background facts required later. In Section 4, we introduce the logic of co-valuations, define compact structures, and examine several basic topological properties that are expressible in this logic. Section 5 develops the model theory: ultraproducts, Łoś's theorem, the downward L\"owenheim-Skolem theorem, omitting types, and atomic models. Section 6 presents a variant of \fra \ theory adapted to the logic of co-valuations, and discusses atomicity of \fra \ limits along with several examples.

\section{Relations}	
Let $R \subseteq X \times Y$, $S \subseteq Y \times Z$ be relations. The composition $R \circ S \subseteq X \times Z$, and inverse $R^{-1} \subseteq Y \times X$ are defined as $$R \circ S =\{ (x,z) \in X \times Z: \exists y (x \, R \, y \, S \, z)\}, \ R^{-1}=\{(y,x) \in (Y,X): xRy\}.$$ 
%

For $x \in X$, and $A\subseteq X$,  $$[x]_R = \{y \in Y : xRy \}, \, [R]_A=\{[x]_R: x \in A \};$$  we shortly write $[R]=[R]_X$. We think of $X$ as the co-domain, and $Y$ as the domain of $R$. Thus, $R$ is \emph{surjective} if for every $x \in X$ there is $y \in Y$ such that $xRy$. It is \emph{co-surjective} if $R^{-1}$ is surjective. The relation $\pi_R \subseteq X \times Y$ is defined as the largest $R' \subseteq R$ such that  
$$ (xR'y \mbox{ and }  x'Ry) \rightarrow x=x'.$$
%
$R$ is \emph{co-injective} if $\pi_R$ is surjective. It is \emph{co-bijective} if it is co-injective and co-surjective.

\begin{remark}
	\label{re:CoInjSurj}
	If $R$ is co-injective, then it is surjective. It is co-bijective iff $[R]$ is a minimal cover of $Y$.
\end{remark} 

For $A \subseteq X$,  $$A^R=\{y \in Y: \exists a \in A \ a R y  \}.$$
We say that $A$ is \emph{$R$-refined} by $B \subseteq Y$ (or that $A$ is refined by $B$, provided that $R$ is clear from the context) if $A^R \supseteq B$. Typically, $R \subseteq \PPb \times \PPb$ is  an ordering relation $\geq$ of a poset $\PPb$, e.g., the inclusion relation $\supseteq$.  In the case that the ordering relation of a poset $\PPb$ is not explicitely specified, we will refer to it by $\leq$ (or by $\geq$). For $A,B \subseteq \PPb$, we say that $A$ is an \emph{$\leq$-up-set} (or up-set) if $A^\leq=A$,

\section{Constructing compact spaces from posets}
\label{se:Constructing}
	
	
In this section, we present a brief overview of the  Barto\v s–Bice–Vignati duality connecting certain countable posets and second-countable compact $T_1$  spaces in the spirit of Stone duality. It has  been developed in \cite{BaBiVi}, to which the reader is referred for a detailed exposition. We provide some proofs to illustrate the role played by the main concepts of this theory.

We begin with some motivation. Let $X$ be a second-countable compact $T_1$ space, and let $\PPb_n$, $n \in \NN$, be a sequence  of finite open covers of $X$ such that every $\PPb_{n}$ is refined by $\PPb_{n+1}$, and every open cover of $X$ is refined by some $\PPb_n$. In particular, $\bigcup \PPb_n$ is a basis of $X$ (see Proposition \ref{pr:SeqGivesBasis}).  For $T_2$, i.e., metrizable, compact spaces, the construction of such a sequence, using the Lebesgue number of a finite cover, is straightfoward; the general case is also elementary (see \cite[Proposition 1.11]{BaBiVi}). Let the disjoint union $\PPb= \bigsqcup \PPb_n$ be ordered by inclusion, i.e., $p \geq q$ iff $p \supseteq q$, where $p \in \PPb_m$, $q \in \PPb_n$, $m \leq n$.   

For every $x \in X$, let us consider the family $S \subseteq \PPb$ of all those $p \in \PPb$ that contain $x$. Such families can be characterized among all subsets of $\PPb$  as precisely those whose complements $T=\PPb \setminus S$ are maximal families in $\PPb$ that do not cover $X$.  Equivalently -- since $T$ fails to cover $X$ if and only if none of its subsets covers  $X$
-- the set $S$ can be described as a minimal family that intersects every cover of  $X$ consisting of elements of  $\PPb$. In the terminology of \cite{BaBiVi}, such an $S$ is called a \emph{minimal selector}. A key point of this description is that it is purely order-theoretic: the very construction of $\PPb_n$'s ensures that a subset of $\PPb$ is a cover of $X$ if and only if it is $\supseteq$-refined by some $\PPb_n$.

In this setting we can define the analogue of the Stone space of a Boolean algebra. The \emph{spectrum} $\Sp \PPb$ of the poset $\PPb$ is the family of all minimal selectors in $\PPb$. One verifies that the sets $p^\in$ of all minimal selectors that contain $p \in \PPb$, form a basis for a topology on $\Sp \PPb$, and that this topology is homeomorphic to the original topology on $X$.

This approach can be developed abstractly for a class of posets that capture the structure of the sequence of finite open covers described above. A poset $\PPb$ is called an \emph{$\omega$-poset} if there exists a (necessarily unique) rank function  $r:\PPb \to \NN$ assigning $0$ to maximal elements, and such that all the \emph{levels} $\PPb_n=r^{-1}(n)$, $n \in \NN$, are finite.  Clearly, $\PPb=\bigsqcup \PPb_n$.

\begin{remark}
In \cite{BaBiVi}, levels consist of all minimal elements in the set of all elements of rank at most $n$. However, for $\omega$-posets that will be considered in this paper, these two definitions are equivalent.
\end{remark}


As a matter of fact, we will work with  $\omega$-posets of a particularly simple form, where the ordering relation is determined by relations between consequtive levels. Let $\PPb_n$, $n \in \NN$, be finite sets, and let $\geq_n \subseteq \PPb_n \times \PPb_{n+1}$, $n \in \NN$, be co-surjective relations. 
Let $\PPb=\bigsqcup \PPb_n$, and let $\geq$ be the reflexive and transitive closure of $\bigsqcup \geq_n$.  It is straightforward to verify that the resulting poset  $(\PPb,\geq)$ is an $\omega$-poset, referred to as an \emph{$\omega$-chain} $(\PPb_n, \geq_n)$. For an $\omega$-chain $(\PPb, \geq)$, $\PPb_n$ always denote the levels of $\PPb$, $\geq^m_n=\geq \uhr \PPb_m \times \PPb_{n}$, where $m \leq n$, and $\geq_n=\geq^n_{n+1}$. We call $(\PPb_n, \geq_n)$ \emph{level-injective} if all $\geq_n$ are co-injective.

\begin{remark}
	In the terminology of \cite{BaBiVi}, $\omega$-chains are graded, predetermined $\omega$-posets.  
\end{remark}

\begin{remark}
	Note that $(\PPb_n, \geq_n)$ has no minimal elements  iff all $\geq_n$ are  surjective. In particular, this holds  when all $\geq_n$ are co-injective, i.e.,   when $(\PPb_n, \geq_n)$  is level-injective.
\end{remark}

Let $A, B$ be covers of a set $X$ such that $A$ is refined by $B$. We say that $A$ is a \emph{consolidation}  of $B$ (or that $A$ \emph{consolidates} $B$), and that $B$ is a \emph{fragmentation}  of $A$ (or that $B$ \emph{fragments} $A$),  if every element of $A$ is a union of elements of $B$, i.e., for $a \in A$,  $$a=\bigcup \{b \in B: b \subseteq a \}.$$  
 
\begin{proposition}
\label{pr:MinCoversCoInj}
Let $A$, $B$ be minimal covers of a set $X$ such that $A$ is refined by $B$. Then $\supseteq \uhr A \times B$ is co-injective. 
\end{proposition}

\begin{proof}
Fix $p \in A$, and, using minimality of $A$, $x \in  p \setminus \bigcup \{p' \in A: p' \neq p \}$. As  $B$ covers $X$, there is $q \in B$  with $x \in q$. As it refines $A$, there is $p' \in A$ such that $p' \supseteq q$. Clearly, $p' \supseteq q$ implies $p'=p$ for every $p' \in A$.
\end{proof}

\begin{proposition}
\label{pr:MinCoversChains}
Let $\PPb_n$, $n \in \NN$, be minimal covers of a set $X$ such that each $\PPb_n$ consolidates $\PPb_{n+1}$. Then $(\PPb,\geq)=(\PPb_n,\supseteq \uhr \PPb_n \times \PPb_{n+1})$  is a level-injective $\omega$-chain, and, for any $p \in \PPb_m$, and $q \in \PPb_n$, where $m \leq n$, we have
\[ p \geq q \iff p \supseteq q.\]
Thus, slightly abusing notation, we can denote $(\PPb,\geq)$ by $(\bigsqcup \PPb_n, \supseteq)$. 
\end{proposition}

\begin{proof}
Level-injectivitiy follows from Proposition \ref{pr:MinCoversCoInj}. Let $\supseteq_n=\supseteq \uhr \PPb_n \times \PPb_{n+1}$, and, for $m<n$, denote by $\supseteq^m_n$ appropriate compositions of $\supseteq_n$. We claim that if $\supseteq_n^0$ is the transitive closure of $\bigcup_{m \leq  n } \supseteq_m$, then $\supseteq_{n+1}^0$ is the transitive closure of $\bigcup_{m \leq n+1 } \supseteq_m$. Fix $m \in \NN$ with $m<n$, and $p \in \PPb_m$, $q \in \PPb_{n+1}$ with $p \supseteq q$. By minimality of the cover $\PPb_{n+1}$, there is $x \in q \setminus \bigcup \{q' \in \PPb_{n+1}: q' \neq q \}$. Because $\PPb_m$ consolidates $\PPb_n$, there is $q' \in \PPb_n$ such that $x \in q' \subseteq p$. But then we must have $q \subseteq q'$, which immediately implies the claim.
\end{proof}

\begin{theorem}[Theorem 1.34 in \cite{BaBiVi}]
	\label{th:CoinitialCapBasis}
	Let $X$ be a compact $T_1$ space. For any minimal open covers $A, B$ of $X$ there is a minimal open cover $C$ of $X$ that fragments both $A$ and $B$. In particular,  if $X$ is second-countable,  then there exist minimal open covers $\PPb_n$ of $X$, $n \in \NN$, such that every $\PPb_{n}$ consolidates $\PPb_{n+1}$, and every open cover of $X$ is refined by some $\PPb_n$. 
\end{theorem}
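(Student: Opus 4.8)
The plan is to prove the two assertions separately, obtaining the second from the first by a routine diagonalisation. Throughout I use that a minimal open cover of a compact space is finite (being its own finite subcover), so $A$ and $B$ are finite; consequently the combined cover $A\cup B$ determines only finitely many \emph{types} $\tau(x)=\{d\in A\cup B: x\in d\}$, each a nonempty subset of the finite set $A\cup B$. The idea is to build $C$ from the \emph{cells} of $A\cup B$: for a realised type $\sigma$ (one of the form $\tau(x)$) put $c_\sigma=\bigcap_{d\in\sigma}d$, an open set containing the points of type $\supseteq\sigma$. Since $A$ and $B$ are covers, every realised $\sigma$ meets both $A$ and $B$, so each $c_\sigma$ lies inside a member of $A$ and inside a member of $B$; thus $\{c_\sigma\}$ refines both covers. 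It also consolidates each of them already: if $a\in A$ and $x\in a$ then $a\in\tau(x)$, whence $x\in c_{\tau(x)}\subseteq a$. The sole defect is minimality, since a cell $c_\sigma$ of non-minimal type has no private point, being contained in the larger cells $c_\rho$ with $\rho\subsetneq\sigma$.

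The key move is to restore minimality by \emph{shrinking} rather than discarding. Choose for each realised type $\rho$ a witness $w_\rho$ of exact type $\rho$, and replace each cell by $c_\sigma^{*}=c_\sigma\setminus\{w_\rho:\rho\supsetneq\sigma\}$. Because $X$ is $T_1$ the excised set is finite and closed, so every $c_\sigma^{*}$ is open. I would then verify three points. The family still covers, since each point lies in the shrunk cell of its own type, that cell having deleted only witnesses of \emph{strictly larger} types. The cover is now minimal, since $w_\sigma$ is a private point of $c_\sigma^{*}$: a smaller cell $c_{\sigma'}^{*}$ ($\sigma'\subsetneq\sigma$) has had $w_\sigma$ deleted, while a larger or incomparable cell never contained it. Finally, consolidation survives: for $a\in A$ and $x\in a$ one still has $x\in c_{\tau(x)}^{*}\subseteq a$, since $c_{\tau(x)}^{*}$ removes only witnesses of types strictly larger than $\tau(x)$. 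The same argument applies to $B$, so $C=\{c_\sigma^{*}\}$ is a minimal open cover fragmenting both $A$ and $B$.

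I expect the genuine obstacle to be exactly the clash between minimality and the union (consolidation) property, and the danger is to try to resolve it the wrong way. Passing to a minimal \emph{subcover} of the cells (equivalently, keeping only minimal-type cells) can fail to cover some $a$ from inside at points that lie in several elements of $A\cup B$: such a point is caught only by a cell that protrudes out of $a$. The shrinking construction sidesteps this by keeping \emph{all} cells and privatising them through deletion of the finitely many larger-type witnesses; $T_1$ is what makes this deletion legitimate (finite sets are closed, so the result stays open), and the union property is preserved because every point remains covered, inside each member containing it, by the cell of its own type. Once this single delicate point is secured, the rest is bookkeeping on the finite poset of types.

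For the \emph{in particular} clause I would fix a countable basis and enumerate all finite covers by basic open sets as $(\FF_m)_{m\in\NN}$; by compactness every open cover of $X$ is refined by some $\FF_m$ (cf. Proposition \ref{pr:SeqGivesBasis}). Construct $\PPb_n$ by recursion: let $\PPb_0$ be any minimal open cover, and given $\PPb_n$ apply the first assertion to $\PPb_n$ and to a minimal subcover $\mathcal M_{n+1}$ of $\FF_{n+1}$ to get a minimal open cover $\PPb_{n+1}$ fragmenting both. Then $\PPb_n$ consolidates $\PPb_{n+1}$ by construction, and since $\PPb_{n+1}$ refines $\mathcal M_{n+1}$ and hence $\FF_{n+1}$, any open cover $\mathcal U$ is refined by the $\PPb_m$ arising from an $\FF_m$ that refines $\mathcal U$. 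This step is entirely routine once the first assertion is available.
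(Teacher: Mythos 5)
Your proof is correct and follows essentially the same route as the paper's: both form the cells of the combined cover $A\cup B$ indexed by realized point-types, pick a witness for each type, and restore minimality by deleting witness points (the paper deletes all other types' witnesses, you delete only those of strictly larger types — an immaterial difference), with $T_1$ guaranteeing openness. Your diagonalisation for the second-countable clause is the standard argument and is fine.
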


\begin{proof}
	For each $x \in X$, let $d_x = \bigcap \{a \in  A \cup B : x \in  a\}$. As $A$ and $B$ are finite, so is $D= \{d_x : x \in  X\}$. For each $d \in  D$, choose some $x_d \in X$ such that $d= d_{x_d}$ and denote the set of all the other chosen points by $$f_d = \bigcup_{e \in D \setminus \{d\}} \{x_e\}.$$ We then have a minimal open cover refining both $A$ and $B$ given by $$C= \{d \setminus  f_d : d \in D\}.$$ Also note that if $y \in  a \in A$ then $y  \neq  x_d$ for any $d \neq d_y$ (because $y= x_d$ implies $d_y = d_{x_d}
	= d$), so $y \in d_y \setminus f_{d_y} \subseteq  a$. This shows that $a= 
	\bigcup \{c \in C: c \subseteq a\}$, for all $a \in A$, i.e., $C$ fragments $A$. For the same reason, $C$ fragments $B$.  
\end{proof}

\begin{proposition}
	\label{pr:SeqGivesBasis}
	Let $\mathcal{C}$ be a family of open covers of a  compact $T_1$ space $X$ such that every open cover of $X$ is refined by some $C \in \mathcal{C}$. Then $\bigcup \mathcal{C}$ is a basis of $X$.
\end{proposition}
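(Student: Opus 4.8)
The plan is to verify the defining property of a basis directly. Since every $C \in \mathcal{C}$ is an \emph{open} cover, all members of $\bigcup \mathcal{C}$ are automatically open sets, so the only thing that needs checking is the basis condition: for every open $U \subseteq X$ and every point $x \in U$, I must produce a member $B$ of $\bigcup \mathcal{C}$ with $x \in B \subseteq U$.

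First I would exploit the $T_1$ hypothesis. As singletons are closed, the set $X \setminus \{x\}$ is open, and hence $\mathcal{U} = \{U, \, X \setminus \{x\}\}$ is a (two-element) open cover of $X$: the point $x$ lies in $U$, and every other point lies in $X \setminus \{x\}$. This is the single place where the separation axiom enters, and selecting this particular test cover is really the only idea in the argument. Next I would apply the hypothesis on $\mathcal{C}$ to $\mathcal{U}$: there is some $C \in \mathcal{C}$ by which $\mathcal{U}$ is refined, i.e.\ every member of $C$ is contained in $U$ or in $X \setminus \{x\}$. Since $C$ covers $X$, I may pick $B \in C$ with $x \in B$. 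Because $x \notin X \setminus \{x\}$, the containment of $B$ in a member of $\mathcal{U}$ forces $B \subseteq U$. Thus $x \in B \subseteq U$ with $B \in C \subseteq \bigcup \mathcal{C}$, which is exactly what a basis requires.

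I expect no genuine obstacle here; the proof is elementary once the correct test cover has been identified, and it is essentially a one-line deduction from the refinement hypothesis. The only points worth flagging are conceptual rather than technical: the $T_1$ assumption is \emph{essential}, as it is precisely what makes $X \setminus \{x\}$ available as an open set separating $x$ from the complement of $U$; whereas compactness of $X$ plays \emph{no} role in the argument, since the single cover $\mathcal{U}$ I feed into the hypothesis is already finite. In particular the statement holds verbatim for any $T_1$ space.
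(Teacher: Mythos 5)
Your proof is correct and follows essentially the same route as the paper: the paper also tests the refinement hypothesis against an open cover extending $\{U\}$ whose other members avoid $x$ (of which your $\{U,\,X\setminus\{x\}\}$ is the canonical instance, with $T_1$ entering at exactly the same point), and concludes that the member of the refining cover containing $x$ must sit inside $U$. Your side remarks are also accurate: compactness is not used in this argument.
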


\begin{proof}
	Fix an open $c \subseteq X$, and $x \in c$. Let $C$ be an open cover of $X$ extending $\{c\}$, and such that $x \not \in \bigcup C \setminus \{c\}$. Fix $C' \in \mathcal{C}$ that refines $C$, and let $c' \in C'$ be such that $x \in c'$. Then we must have $c' \subseteq c$. 	
\end{proof}

Let $\PPb$ be an $\omega$-poset. A finite subset $B \subseteq \PPb$ is called a \emph{band} if every $p \in \PPb$ is comparable with some $b \in B$.  A subset $C \subseteq \PPb$ is called a \emph{cap} if it is  $\geq$-refined by a band. 
We call $S \subseteq \PPb$ a \emph{selector} if it has non-empty intersection with every cap (equivalently: $\PPb \setminus S$ is not a cap).  The \emph{spectrum} $\Sp \PPb$ of $\PPb$ is the family of all minimal selectors in $\PPb$. For $p \in \PPb$, let $$p^\in=\{S \in \Sp \PPb: p \in S \},$$ and, for $A \subseteq \PPb$, let $$A^\in=\{p^\in: p \in A \}.$$ We equip $\Sp \PPb$ with the topology $\tau_\PPb$ generated by the sub-basis $\PPb^\in$.

\begin{remark}
\label{re:LevelsEnough}
For $\omega$-chains $\PPb$ without minimal elements, e.g., level-injective $\omega$-chains, levels are bands, and caps can be equivalently defined as subsets of $\PPb$ that are refined by a level.
\end{remark}

	\begin{lemma}[Propositions 2.2, 2.3, 2.4, and 2.13 in \cite{BaBiVi}]
		\label{le:SelectorFilterAndContainsMin}
		Let $\PPb$ be an  $\omega$-poset.
		\begin{enumerate}
			\item Every selector contains a minimal selector.
			\item Every minimal selector is a filter, i.e., it is an up-set $S$ such that for any $p,q \in S$ there is $r \in S$ with $r \leq p,q$.
		\end{enumerate}
	\end{lemma}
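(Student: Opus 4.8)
The plan is to treat the two assertions separately, in both cases using the finiteness of bands as a substitute for compactness. Throughout I abbreviate by calling a cap disjoint from a set a \emph{witness} to that set failing to be a selector.

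For (1) I would run Zorn's lemma on the family $\mathcal{F}$ of all selectors contained in the given selector $S$, ordered by inclusion, and extract a $\subseteq$-minimal element; such an element is a minimal selector below $S$. The only nontrivial hypothesis of Zorn's lemma is that every $\subseteq$-chain $(S_i)_{i\in I}$ in $\mathcal{F}$ has a lower bound, and the natural candidate is $\bigcap_i S_i$, so the crux is that the intersection of a chain of selectors is again a selector. Dually, writing $T_i=\PPb\setminus S_i$, I must show that the increasing union $\bigcup_i T_i$ of the non-caps $T_i$ is not a cap. This is exactly where finiteness enters: were $\bigcup_i T_i$ a cap, it would be $\geq$-refined by a band $B$, so each of the finitely many $b\in B$ would lie below some $c_b\in\bigcup_i T_i$; since the $T_i$ form a chain and hence are directed under inclusion, all the $c_b$ would already lie in a single $T_{i^*}$, making $T_{i^*}$ a cap and contradicting that $S_{i^*}$ is a selector.

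For (2) the engine is a consequence of minimality: if $S$ is a minimal selector and $p\in S$, then $S\setminus\{p\}$ is not a selector, so some cap $C_p$ is disjoint from $S\setminus\{p\}$; as $S$ meets every cap, this forces $S\cap C_p=\{p\}$. I call such a $C_p$ a \emph{critical cap} for $p$, and fix a band $B$ with $C_p^\geq\supseteq B$. To see that $S$ is an up-set, suppose $p\in S$ and $p\leq q$. Then $\{q\}^\geq\supseteq\{p\}^\geq$, so replacing $p$ by $q$ gives $C_p'=(C_p\setminus\{p\})\cup\{q\}$ with $(C_p')^\geq\supseteq C_p^\geq\supseteq B$; hence $C_p'$ is a cap refined by the same band. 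Since $S\cap(C_p\setminus\{p\})=\emptyset$ while $S$ must meet $C_p'$, we conclude $q\in S$.

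The directedness is where the real work lies. Given $p,q\in S$ with critical caps $C_p,C_q$, I would first choose a single band $B^*$ refining both caps simultaneously — the abstract analogue of the common-refinement statement of Theorem \ref{th:CoinitialCapBasis}, which for $\omega$-chains without minimal elements is simply a sufficiently deep level $\PPb_m$ (Remark \ref{re:LevelsEnough}). I then form
$$ C^* = (C_p\setminus\{p\})\cup(C_q\setminus\{q\})\cup\{\,e\in B^*: e\leq p,q\,\}. $$
The point is that $C^*$ is a cap refined by $B^*$: each $e\in B^*$ lies below some element of $C_p$ and some element of $C_q$, and if in either case the only available witness is $p$, respectively $q$, then $e\leq p$ and $e\leq q$, so $e$ is itself one of the adjoined ``meet-fragments''; otherwise $e$ is already refined by a surviving element of $C_p\setminus\{p\}$ or $C_q\setminus\{q\}$. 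Since $S$ meets $C^*$ but is disjoint from both $C_p\setminus\{p\}$ and $C_q\setminus\{q\}$, it must contain some $e\in B^*$ with $e\leq p,q$, the required common lower bound. I expect the main obstacle to be precisely this directedness step, and within it the guarantee that a \emph{single} band refines two given caps at once; once that common refinement is secured, checking that $C^*$ is a cap is a routine case analysis on the refining witnesses.
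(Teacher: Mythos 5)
Your proof is correct and follows essentially the same route as the paper's: Zorn's lemma with the finiteness of a refining band securing that the intersection of a chain of selectors is again a selector, the cap-replacement trick $(C\setminus\{p\})\cup\{q\}$ for the up-set property, and a single band refining both critical caps for directedness. The only local difference is the final step, where the paper picks $r\in S\cap B$ and pushes it up into the two critical caps using the already-established up-set property, whereas you assemble the composite cap $C^{*}$ and let $S$'s forced intersection with it land directly in $\{e\in B^{*}:e\leq p,q\}$ --- an equally valid finish that happens not to use the up-set property at all.
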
 
	
\begin{proof}
To prove (1), we use Zorn's lemma. Let $S_i$, $i \in I$, be an infinite chain (under inclusion) of selectors. We show that $S= \bigcap_i S_i$ is a selector. Let $C$ be a cap. We can assume that $C$ is finite, so there is $c \in C$ such that for $S_i \cap C=\{c\}$ for almost all $i \in I$. But $S_i$'s form a chain, so, actually, $S_i \cap C=\{c\}$ for all $i \in I$.

To prove (2), fix a minimal selector $S$. Minimality means that, for every $s \in  S$, there is a cap $C$ such that $S \cap  C= \{s\}$. Note that, for any $p \geq s$, $(C \setminus \{s\}) \cup \{p\}$ is refined by $C$ and is thus also a cap. As $S$ must also overlap this new cap, the only possibility is that $S$ also contains $p$. Thus, $S$ is an up-set.  

Fix now $p,q \in S$. Again, by minimality of $S$, there are caps $C$, $D$ such that $C \cap S= \{q\}$, and $D\cap S= \{r\}$. We can assume that $C$ and $D$ are finite. Then there exists $n \in \NN$ such that the band $B$ consisting of all minimal elements in the set of elements of rank at most $n$, refines both $C$ and $D$. As $B$ is a cap, there is $r \in S \cap B$. But $B$ refines $C$ and $D$, so there are $c \in C$, $d \in D$ such that $r \leq c,d$. Because $S$ is an up-set, we also have $c,d \in S$. Thus, $c=p$, $d=q$, i.e., $S$ is a filter.
\end{proof}
	
	\begin{corollary}
		$\PPb^\in$ is a basis of $\tau_\PPb$.
	\end{corollary}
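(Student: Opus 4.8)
The plan is to upgrade the sub-basis $\PPb^\in$ to a genuine basis by verifying the two standard conditions: that $\PPb^\in$ covers $\Sp\PPb$, and that any point lying in a finite intersection of sub-basic sets also lies in a single sub-basic set contained in that intersection. The entire argument rests on part (2) of Lemma \ref{le:SelectorFilterAndContainsMin}, namely that every minimal selector is a filter.

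For the covering condition, I would observe that any band (in particular any level $\PPb_n$) is a cap, so the empty set is not a selector; hence every minimal selector $S$ is nonempty, and choosing any $p \in S$ gives $S \in p^\in$. Thus $\bigcup \PPb^\in = \Sp\PPb$, so $\PPb^\in$ covers the spectrum.

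For the intersection condition, take $S \in p_1^\in \cap \dots \cap p_k^\in$, that is, $p_1, \dots, p_k \in S$. Since $S$ is a filter, a finite induction using the filter property produces $r \in S$ with $r \leq p_i$ for every $i$. I claim $S \in r^\in \subseteq \bigcap_i p_i^\in$. The membership $S \in r^\in$ is immediate from $r \in S$. For the inclusion, let $T \in r^\in$, so that $r \in T$; since $T$ is again a minimal selector and therefore an up-set, and each $p_i$ satisfies $p_i \geq r$, the up-set property ($A^\leq = A$) forces $p_i \in T$ for all $i$, i.e. $T \in \bigcap_i p_i^\in$.

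The two conditions together establish that $\PPb^\in$ is a basis for $\tau_\PPb$. There is no serious obstacle here: the substantive content is entirely supplied by Lemma \ref{le:SelectorFilterAndContainsMin}(2), and the only mild care needed is the finite induction to obtain a common lower bound $r$ inside $S$, together with the correct reading of the up-set condition $A^\leq = A$ (so that every element above a member of an up-set is again a member).
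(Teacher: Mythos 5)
Your proof is correct and follows essentially the same route as the paper: both arguments reduce everything to Lemma \ref{le:SelectorFilterAndContainsMin}(2), using the filter property to find a common lower bound $r \in S$ below the $p_i$'s and the up-set property to get $r^\in \subseteq \bigcap_i p_i^\in$. The paper only treats the intersection of two sub-basic sets (which suffices by induction) and leaves the covering condition implicit, so your version is just a slightly more explicit rendering of the same argument.
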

	
	\begin{proof}
		Suppose that $S \in p^\in \cap q^\in$. As $S$ is a filter, there is $r \in S$ such that $r \leq p,q$. But then $S \in r^\in \subseteq p^\in \cap q^\in$.
	\end{proof}
	
A basis $\PPb$  of a topological space $X$ is called a \emph{cap-basis} if every cover of $X$ consisting of sets from $\PPb$ is a cap in the poset $\PPb$ ordered by inclusion.

\begin{remark}
\label{re:SeqGivesCapBasis}
Let $\PPb_n$, $n \in \NN$, be minimal open covers of a second-countable compact $T_1$ space $X$  such that every $\PPb_{n}$ is refined by $\PPb_{n+1}$, and every finite open cover of $X$ is refined by some $\PPb_n$. Then $\PPb=\bigcup \PPb_n$ is a cap-basis of $X$.
\end{remark}

\begin{proposition}[Proposition 2.8 in \cite{BaBiVi}]
		\label{pr:BBV}
		The topology $\tau_\PPb$ is a second-countable compact $T_1$ topology on $\Sp \PPb$, and $\PPb^\in$ is a cap-basis of $\tau_\PPb$.
\end{proposition}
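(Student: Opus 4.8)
The plan is to isolate a single order-theoretic duality and deduce all four assertions from it. Write $\Sp\PPb$ for the set of minimal selectors and, for $A \subseteq \PPb$, put $A^\in = \{p^\in : p \in A\}$. Since $\PPb = \bigsqcup \PPb_n$ is a countable union of finite levels, $\PPb$ is countable, so the basis $\PPb^\in$ (a basis by the preceding Corollary) is countable and $\tau_\PPb$ is second-countable. For the $T_1$ axiom I would first observe that two distinct minimal selectors $S, T$ are necessarily $\subseteq$-incomparable: if, say, $S \subseteq T$, then $S$ is a selector contained in the minimal selector $T$, forcing $S = T$. Hence there are $p \in S \setminus T$ and $q \in T \setminus S$, and then $S \in p^\in \not\ni T$ while $T \in q^\in \not\ni S$; thus every singleton is closed.

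The heart of the argument is the equivalence
$$A^\in \text{ covers } \Sp\PPb \iff A \text{ is a cap in } \PPb, \qquad A \subseteq \PPb.$$
For the easy direction, if $A$ is a cap then every selector, in particular every minimal selector $S$, meets $A$ by the definition of selector, so $S \in p^\in$ for some $p \in A$ and $A^\in$ covers. For the converse I would argue contrapositively: if $A$ is not a cap then, by the defining equivalence for selectors (taking $S=\PPb\setminus A$), $\PPb \setminus A$ is a selector, so by Lemma \ref{le:SelectorFilterAndContainsMin}(1) it contains a minimal selector $S_0$; since $S_0 \cap A = \emptyset$, no $p^\in$ with $p \in A$ contains $S_0$, and $A^\in$ fails to cover.

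Compactness then follows by reducing to basic covers. A cover of $\Sp\PPb$ by members of $\PPb^\in$ has the form $A^\in$ for $A = \{p : p^\in \text{ lies in the cover}\}$, so by the equivalence $A$ is a cap, i.e. it is $\geq$-refined by some band $B$, which is finite. For each $b \in B$ choose $a_b \in A$ with $a_b \geq b$ and set $A_0 = \{a_b : b \in B\}$; then $A_0$ is finite and still $\geq$-refined by $B$, hence $A_0$ is again a cap, so $A_0^\in$ is a finite subcover. Since a space is compact as soon as every basic cover has a finite subcover, $\tau_\PPb$ is compact.

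Finally, for the cap-basis property I would transport the cap structure along $p \mapsto p^\in$. This map is order-preserving from $(\PPb, \leq)$ to $(\PPb^\in, \subseteq)$: if $p \leq q$ and $S \in p^\in$, then, as minimal selectors are up-sets by Lemma \ref{le:SelectorFilterAndContainsMin}(2), $q \in S$, so $p^\in \subseteq q^\in$. Given a cover $A^\in$ of $\Sp\PPb$, the equivalence makes $A$ a cap, refined by a band $B$; the image $B^\in$ is then a band in $(\PPb^\in, \subseteq)$ (comparabilities are preserved) and $A^\in$ is $\supseteq$-refined by $B^\in$, so $A^\in$ is a cap in $\PPb^\in$. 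I expect the main obstacle to be bookkeeping rather than ideas: one must verify the defining equivalence for selectors used above, and check carefully that $p \mapsto p^\in$ transports the $\omega$-poset data (ranks, bands, refinement) faithfully enough that \emph{cap in $\PPb$} and \emph{cap in $\PPb^\in$} genuinely match up, which is the point where injectivity and order-reflection of this map could in principle intervene.
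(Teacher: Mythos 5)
Your proof is correct and follows essentially the same route as the paper: $T_1$ from the incomparability of distinct minimal selectors, the central equivalence ``$A^\in$ covers $\Sp\PPb$ iff $A$ is a cap'' (with the converse via Lemma \ref{le:SelectorFilterAndContainsMin}(1)), and compactness plus the cap-basis property extracted from that equivalence using finite bands. Your easy direction is in fact slightly more economical than the paper's, since you invoke the definition of selector directly rather than rerunning the band argument, but the substance is identical.
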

	
	\begin{proof}
		Given any distinct $S, T \in \Sp \PPb$, minimality implies that there exist $s \in S \setminus T$, and $t \in T \setminus S$. This means that $S \in s^\in$, $T \not \in s^\in$, and $S \not \in t^\in $, $T \in t^\in$, showing that $\Sp \PPb$ is $T_1$.
		
		We prove that $\tau_\PPb$ is a compact topology. Without loss of generality, we can assume that $\Sp \PPb$, and so $\PPb$, is infinite. Fix a cap $C$, and a band $B$ that refines $C$. Clearly, $B^\in$ is a band in $\PPb^\in$, i.e., $C^\in$ is a cap. Fix $S \in \Sp \PPb$. As $S$ is infinite, $B$ is finite, and $\PPb$ is an $\omega$-poset, there must exist $s \in S$, and $b \in B$ such that $s \leq b$. Because $B$ refines $C$, there is $c \in C$ such that $b \leq c$. But $S$ is an up-set, so $c \in S$, i.e., $S \in c^\in$. As $S$ was arbitrary, we get that $C^\in$ covers $\Sp \PPb$.
		
		Suppose now that $C \subseteq \PPb$ is not a cap. In other words, $\PPb \setminus C$ is a selector, so it contains a minimal selector $S$. But then $c \not \in S$, i.e., $S \not \in c^\in$ for every $c \in C$, and $C^\in$ does not cover $\Sp \PPb$. Therefore covers of $X$ by sets from $\PPb^\in$ are given by caps in $\PPb$. In particular, $\PPb^\in$ is a cap-basis, and
		$\tau_\PPb$ is compact.
	\end{proof}

\begin{proposition}
\label{pr:LevelMin-CovMin}
	If $\PPb$ is a level-injective $\omega$-chain, then every level $\PPb_n^\in$ is a minimal $\tau_\PPb$-open cover of $\Sp \PPb$, and every $\tau_\PPb$-open cover is refined by some $\PPb_n^\in$. In particular, $p^\in \neq \emptyset$, $p \in \PPb$.
\end{proposition}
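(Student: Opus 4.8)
The plan is to separate the two claims -- that $\PPb_n^\in$ covers $\Sp \PPb$, and that this cover is minimal -- since the first is essentially formal while the second is where level-injectivity is genuinely used. Throughout I would use that a level-injective $\omega$-chain has no minimal elements, so that by Remark \ref{re:LevelsEnough} a subset of $\PPb$ is a cap exactly when it is $\geq$-refined by some level.

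For the covering, each $\PPb_n$ is $\geq$-refined by the level $\PPb_n$ itself (by reflexivity $\PPb_n \subseteq \PPb_n^\geq$), so $\PPb_n$ is a cap. Since every selector meets every cap by definition and every minimal selector is a selector, each $S \in \Sp \PPb$ satisfies $S \cap \PPb_n \neq \emptyset$, i.e. $S \in p^\in$ for some $p \in \PPb_n$; equivalently this is the content of Proposition \ref{pr:BBV}. Hence $\PPb_n^\in$ covers $\Sp \PPb$.

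For minimality I would fix $p \in \PPb_n$ and construct a minimal selector $S$ with $S \cap \PPb_n = \{p\}$; this single object simultaneously shows $p^\in \neq \emptyset$ and that $p^\in$ cannot be omitted from the cover. Using that each $\geq_k$ is co-injective, i.e. that $\pi_{\geq_k}$ is surjective, I choose a descending chain $p = r_n \mathrel{\pi_{\geq_n}} r_{n+1} \mathrel{\pi_{\geq_{n+1}}} r_{n+2} \cdots$ with $r_k \in \PPb_k$, so that at each step $r_k$ is the unique $\PPb_k$-element above $r_{k+1}$. Let $T$ be the up-set generated by $\{r_k : k \geq n\}$. I would verify that $T$ is a selector by showing it meets every level (hence, by Remark \ref{re:LevelsEnough}, every cap): for $m \geq n$ the element $r_m$ lies in $T \cap \PPb_m$, and for $m < n$ co-surjectivity yields some element of $\PPb_m$ lying above $p = r_n$, which then belongs to $T$; upward closure of $T$ converts this into a point of $T$ inside any prescribed cap. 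By Lemma \ref{le:SelectorFilterAndContainsMin}(1), $T$ then contains a minimal selector $S$.

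The decisive and most delicate step is to compute $T \cap \PPb_n = \{p\}$, since this is exactly where the chain of $\pi_{\geq_k}$'s must be exploited. If $q \in \PPb_n$ and $q \geq r_k$ for some $k > n$, then because $\geq$ is the transitive closure of the level maps, $q$ is joined to $r_k$ by a chain descending one level at a time; its final link must land on $r_{k-1}$, the unique $\PPb_{k-1}$-predecessor of $r_k$, and downward induction along the $\pi_{\geq_j}$'s forces $q = r_n = p$. Thus $T \cap \PPb_n = \{p\}$, so $S \cap \PPb_n \subseteq \{p\}$; as $S$ is a selector and $\PPb_n$ a cap, the intersection is nonempty, hence equals $\{p\}$. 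Consequently $S \in p^\in$ while $S \notin q^\in$ for every $q \in \PPb_n \setminus \{p\}$, so no proper subfamily of $\PPb_n^\in$ covers $\Sp \PPb$, and in particular every $p^\in$ is nonempty.
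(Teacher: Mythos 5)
Your proof is correct and follows essentially the same route as the paper's: the covering part via $\PPb_n$ being a band/cap, and minimality via a co-injectivity-built descending chain $p = r_n, r_{n+1}, \dots$ whose generated up-set is a selector containing a minimal selector $S$ with $S \cap \PPb_n = \{p\}$. The only difference is presentational — you spell out the downward induction showing $T \cap \PPb_n = \{p\}$ in more detail than the paper does.
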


\begin{proof}
The facts that  every $\PPb_n^\in$ is a $\tau_\PPb$-open cover of $\Sp \PPb$, and every $\tau_\PPb$-open cover is refined by some $\PPb_n^\in$, directly follow from Remark \ref{re:LevelsEnough} and Proposition \ref{pr:BBV}. It remains to show minimality of  $\PPb_n^\in$'s.

Fix $n_0 \in \NN$, and $p \in \PPb_{n_0}$. As $\PPb_{n_0}$ is a band, $\PPb^\in_{n_0}$ is a cover of $\Sp \PPb$.  Using co-injectivity of $\geq_n$'s, we construct $p_n	\in \PPb_{n}$, $n \geq n_0$, so that $p_{n_0}=p$, and, for every $p' \in \PPb_{n}$, $p_{n+1} \geq p'$ implies $p'=p_n$. Observe that since $\geq$ is the transitive and reflexive closure of $\geq_n$, for every $p' \in \PPb_{n_0}$, $p_{n+1} \geq p'$ implies $p'=p$. Clearly, $\{p_n\}^\leq$ is a selector, and by our construction, for every minimal selector $S \subseteq \{p_n\}^\leq$, and $p' \in \PPb_{n_0}$, $p' \in S$ implies $p'=p$. In particular, $S \in p^\in \setminus \bigcup_{ \{q \in \PPb_{n_0}:q \neq p\} } q^\in$ .
\end{proof}

\begin{proposition}[Proposition 2.9 in \cite{BaBiVi}]
\label{pr:HomeoSpectrum}
Let $X$ be a second-countable compact $T_1$ space $X$, and let $\PPb$ be a cap-basis of $X$ ordered by inclusion. Then $x \mapsto x^\in=\{p \in \PPb: x \in p \}$ is a homeomorphism between $X$ and $\Sp \PPb$. In particular, every second-countable compact $T_1$ space arises as the spectrum of an $\omega$-chain.
\end{proposition}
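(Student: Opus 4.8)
The map in question is $\phi\colon X \to \Sp\PPb$, $\phi(x)=x^\in=\{p\in\PPb:x\in p\}$. I would organize the whole argument around a single lemma identifying caps with covers, after which every required property of $\phi$ drops out quickly; throughout I assume, as one may, that $\emptyset\notin\PPb$.

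The key step is to prove that \emph{for a cap-basis $\PPb$, a set $C\subseteq\PPb$ is a cap if and only if $\bigcup C = X$}. The implication ``$C$ covers $X$ $\Rightarrow$ $C$ is a cap'' is exactly the cap-basis hypothesis. For the converse, note that if $C$ is $\geq$-refined by a band $B$, then each $b\in B$ satisfies $b\subseteq c$ for some $c\in C$, so $\bigcup B\subseteq\bigcup C$; hence it suffices to show that every band covers $X$. Suppose $x\notin\bigcup B$. Every basic neighbourhood $p\ni x$ is comparable to some $b\in B$, and since $x\in p\setminus b$ we cannot have $p\subseteq b$, so $b\subseteq p$. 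As $B$ is finite, intersecting finitely many such neighbourhoods and refining by a basis element shows that a single $b_0\in B$ satisfies $b_0\subseteq p$ for every $p\ni x$; thus $b_0\subseteq\bigcap\{p\in\PPb:x\in p\}=\{x\}$ by $T_1$-ness, and since $x\notin b_0$ we get $b_0=\emptyset$, a contradiction. This covering lemma is the main obstacle; everything else is bookkeeping.

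Granting the lemma, the remaining steps are routine. Each $\phi(x)$ is a selector, since every cap covers $X$ and hence contains some $p\ni x$. It is minimal because, for any $p\in\phi(x)$, the family $\{p\}\cup\{q\in\PPb:x\notin q\}$ covers $X$, so it is a cap whose only member containing $x$ is $p$; thus any selector contained in $\phi(x)$ must contain $p$, forcing it to equal $\phi(x)$. Injectivity of $\phi$ is immediate from $T_1$-ness together with $\PPb$ being a basis. For surjectivity, given a minimal selector $S$ the set $\PPb\setminus S$ is not a cap, hence does not cover $X$; any $x\in X\setminus\bigcup(\PPb\setminus S)$ then satisfies $\phi(x)\subseteq S$, and minimality of $S$ forces $\phi(x)=S$.

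Finally I would verify that $\phi$ is a homeomorphism by computing $\phi^{-1}(p^\in)=p$, which is open; since $\PPb^\in$ is a basis of $\tau_\PPb$, this gives continuity, and as $\phi$ is a bijection we obtain $\phi(p)=p^\in$, so $\phi$ is also open. A continuous open bijection is a homeomorphism. The concluding ``in particular'' then follows by combining this with Theorem~\ref{th:CoinitialCapBasis} and the preceding proposition and remark, which equip any second-countable compact $T_1$ space with a cap-basis $\PPb$ that forms a level-injective $\omega$-chain under $\supseteq$, whence $X\cong\Sp\PPb$.
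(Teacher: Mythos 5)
Your argument is correct. The paper itself gives no proof of this proposition -- it is quoted verbatim from \cite{BaBiVi} (Proposition 2.9 there) -- so there is nothing internal to compare against; your write-up is a valid self-contained substitute. The one genuine pivot, and you identify it correctly, is the equivalence ``cap $\iff$ cover of $X$'' for a cap-basis: one direction is the definition, and the other reduces to showing every band covers $X$, which your finite-intersection-plus-$T_1$ argument handles cleanly. Note that the hypothesis $\emptyset \notin \PPb$, which you flag as an assumption, is actually necessary (if $\emptyset \in \PPb$ then $\{\emptyset\}$ is a band, every nonempty subset is a cap, and $\Sp\PPb$ degenerates), so it is right to make it explicit rather than treat it as cosmetic. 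The remaining steps (minimality of $x^\in$ via the cap $\{p\} \cup \{q : x \notin q\}$, surjectivity via a point outside $\bigcup(\PPb\setminus S)$, and the computation $\phi^{-1}(p^\in)=p$, $\phi(p)=p^\in$) all check out, and the ``in particular'' clause does follow from Theorem \ref{th:CoinitialCapBasis} together with the remark that a consolidating sequence of minimal covers yields a cap-basis forming an $\omega$-chain.
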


\begin{proof}
First we observe, arguing as in the introduction to Section 3, that every $x^\in$ is indeed a minimal selector in $\PPb$. As $X$ is $T_1$, for any distinct $x, x' \in X$, there is $p \in \PPb$ such that $x \in p$, $x' \not \in p$, i.e., the mapping $x \mapsto x^\in$ is injective. Finally, we note that, for every $p \in \PPb$, $x \in p $ iff $x^\in \in p^\in$, which implies that it is continuous, and its image is dense in $\Sp \PPb$. Thus, it is onto $\Sp \PPb$, i.e., a homeomorphism between $X$ and $\Sp \PPb$. 
\end{proof}

\begin{remark}
\label{re:pGoesTop^in}
Note that $x \in p$ iff $x^\in \in p$, so the homeomorphism $x \mapsto x^\in$ maps $p \in \PPb$ onto $p^\in$.
\end{remark}



%
\begin{proposition}
\label{le:GoodBehavior}
Let  $\PPb$ be a level-injective $\omega$-chain. Then $x \geq y$ iff $x^\in \supseteq y^\in$, for $x \in \PPb_m$, $y \in \PPb_n$, where $m \leq n$. 
\end{proposition}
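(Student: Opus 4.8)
The statement splits into two implications, and the forward one is immediate: if $x \geq y$ and $S \in y^\in$, then $y \in S$, and since $S$ is a minimal selector it is an up-set by Lemma~\ref{le:SelectorFilterAndContainsMin}(2); as $x \geq y$, membership of $y$ forces $x \in S$, i.e.\ $S \in x^\in$. This holds in any $\omega$-poset, so I would dispose of it in one line and concentrate on the converse.

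For the converse I would argue by contraposition: assuming $x \not\geq y$, I will exhibit a minimal selector $S$ with $y \in S$ and $x \notin S$, witnessing $y^\in \not\subseteq x^\in$. The plan is to reuse the construction from Proposition~\ref{pr:LevelMin-CovMin}. Starting from $q_n = y$, I would use co-injectivity of the relations $\geq_k$ to build a descending chain $q_n \geq q_{n+1} \geq q_{n+2} \geq \cdots$ with $q_k \in \PPb_k$, chosen so that for each $k \geq n$ the element $q_{k+1}$ is \emph{private} to $q_k$, meaning $q_k$ is the unique element of $\PPb_k$ lying above $q_{k+1}$. Then $\{q_k : k \geq n\}^\leq$ is a selector (exactly as in Proposition~\ref{pr:LevelMin-CovMin}, using that every level is a band, hence a cap by Remark~\ref{re:LevelsEnough}), and by Lemma~\ref{le:SelectorFilterAndContainsMin}(1) it contains a minimal selector $S$.

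The crux is a rigidity property forced by level-injectivity: decomposing any relation $p \geq q_k$ into single-level steps and peeling off the private elements by downward induction shows that the chain realizing it must pass through $q_n = y$; consequently any $p$ of rank $j$ with $n \leq j \leq k$ and $p \geq q_k$ must equal $q_j$, and any $p$ of rank $\leq n$ with $p \geq q_k$ satisfies $p \geq y$. I would apply this twice. First, with $j = n$ it gives $\{q_k : k \geq n\}^\leq \cap \PPb_n = \{y\}$; since $S$ is a selector meeting the band $\PPb_n$ and $S \subseteq \{q_k : k \geq n\}^\leq$, we conclude $y \in S$. Second, for $x \in \PPb_m$ with $m \leq n$ the same decomposition yields $x \in \{q_k : k \geq n\}^\leq \iff x \geq y$; hence $x \not\geq y$ gives $x \notin \{q_k : k \geq n\}^\leq \supseteq S$, so $x \notin S$. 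Thus $S \in y^\in \setminus x^\in$, as required.

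The main obstacle I anticipate is precisely this rigidity step: carefully justifying that $p \geq q_k$ can be realized only through the specific chain of private elements, which is exactly where co-injectivity of every $\geq_k$ is indispensable. I would also keep explicit track of the hypothesis $m \leq n$: the chain descends from level $n$, so the private-element uniqueness controls only elements of rank $\leq n$, and it is precisely for such $x$ that membership in the upward closure forces $x \geq y$. The remaining facts — that $\{q_k : k \geq n\}^\leq$ is a selector, and that levels are bands — I would quote from the proofs of Propositions~\ref{pr:LevelMin-CovMin} and~\ref{pr:BBV}.
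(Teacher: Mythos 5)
Your proof is correct and follows essentially the same route as the paper: the forward direction via the up-set property of minimal selectors, and the converse by building the chain of private elements below $y$ (exactly the construction of Proposition~\ref{pr:LevelMin-CovMin}, reusing co-injectivity), taking a minimal selector inside its upward closure, and using the rigidity of that chain to get $y \in S$ and $x \notin S$. Your write-up is in fact more explicit than the paper's about the rigidity step and about where the hypothesis $m \leq n$ enters, but the argument is the same.
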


\begin{proof}
	The implication from left to right is obvious. To see the other implication, we fix $x \in \PPb_{m_0}$, $y \in \PPb_{n_0}$ such that $m_0 \leq n_0$, and $x \not \geq y$. Then we proceed as in the proof of Proposition \ref{pr:LevelMin-CovMin}, i.e., we construct $y_n	\in \PPb_{n}$, $n \geq n_0$, so that $y_{m_0}=y$, and, for every $y' \in X_{n}$, $y' \geq y_{n+1}$ implies $y'=y_n$. For every minimal selector $S \subseteq \{y_n\}^\leq$, we have $S \in y^\in \setminus x^\in$, i.e., $x^\in \not \supseteq y^\in$.
\end{proof}

\begin{corollary}
\label{co:IsomorphicPosets}	
Let $(\PPb, \geq)$ be a  level-injective $\omega$-chain. Then $p \mapsto p^\in$ is an order-isomorphism between $(\PPb, \geq)$ and the level-injective $\omega$-chain  $(\bigsqcup \PPb_n^\in, \supseteq)$.
\end{corollary}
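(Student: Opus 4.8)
The plan is to combine Proposition \ref{le:GoodBehavior} with Proposition \ref{pr:LevelMin-CovMin} to show that $p \mapsto p^\in$ is a well-defined bijection that preserves and reflects the order. First I would observe that the map is defined on all of $\bigsqcup \PPb_n = \PPb$, sending each $p \in \PPb_n$ to $p^\in \in \PPb_n^\in$; by construction $\bigsqcup \PPb_n^\in = \PPb^\in$ is exactly the image, so surjectivity onto $\bigsqcup \PPb_n^\in$ is immediate. For injectivity, if $p \neq q$ then either they lie in different levels, in which case they have distinct ranks and the images cannot coincide once we know the map respects levels, or they lie in the same level $\PPb_n$; since $\PPb_n^\in$ is a \emph{minimal} cover of $\Sp \PPb$ by Proposition \ref{pr:LevelMin-CovMin}, distinct elements of $\PPb_n$ give distinct (indeed incomparable, hence unequal) members $p^\in, q^\in$ of that cover. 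In particular Proposition \ref{pr:LevelMin-CovMin} guarantees $p^\in \neq \emptyset$, which rules out the degenerate collapse of several $p$'s onto the empty set.

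The order-isomorphism property is then a direct translation of Proposition \ref{le:GoodBehavior}. That proposition states precisely that for $x \in \PPb_m$, $y \in \PPb_n$ with $m \leq n$, one has $x \geq y$ iff $x^\in \supseteq y^\in$. I would first reduce the general comparison $p \geq q$ in $\PPb$ to this form: in an $\omega$-chain any two comparable elements $p \geq q$ satisfy $\rk(p) \leq \rk(q)$, so writing $m = \rk(p)$, $n = \rk(q)$ we are always in the case $m \leq n$ covered by the proposition, and the reverse inequality $p^\in \supseteq q^\in$ likewise forces $q^\in \subseteq p^\in$ to arise only from comparable elements by the same rank inequality. Thus $p \geq q \Leftrightarrow p^\in \supseteq q^\in$ holds for all $p, q \in \PPb$, which is exactly the statement that $p \mapsto p^\in$ is an order-isomorphism onto its image ordered by $\supseteq$.

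I expect the only genuine subtlety to be bookkeeping about which direction of the order is at play. The posets are ordered by $\geq$ (the reflexive–transitive closure of the level relations), while the images $p^\in$ are ordered by \emph{inclusion} $\subseteq$; Proposition \ref{le:GoodBehavior} matches $x \geq y$ with $x^\in \supseteq y^\in$, so I must be careful to state the isomorphism as one between $(\bigsqcup \PPb_n, \geq)$ and $(\bigsqcup \PPb_n^\in, \supseteq)$, rather than accidentally inverting the order. Once this convention is fixed, no further work is needed: the equivalence from Proposition \ref{le:GoodBehavior} simultaneously gives preservation of order ($p \geq q \Rightarrow p^\in \supseteq q^\in$), reflection of order (the converse), and — via the equality case, together with the minimality of each level cover — injectivity, so the map is an order-isomorphism.
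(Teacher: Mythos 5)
Your proposal follows the same route the paper intends: the corollary is stated there without a separate proof, as an immediate consequence of Proposition \ref{le:GoodBehavior} (for the order) together with Proposition \ref{pr:LevelMin-CovMin} (for non-emptiness and within-level injectivity of $p \mapsto p^\in$), which is exactly the combination you use. Your handling of surjectivity, of injectivity within a level via minimality of the cover $\PPb_n^\in$, and of order-preservation and order-reflection between levels $m \leq n$ is correct.

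The one step that is not actually justified is the cross-level direction of order-reflection. Proposition \ref{le:GoodBehavior} gives $x \geq y \iff x^\in \supseteq y^\in$ only for $x \in \PPb_m$, $y \in \PPb_n$ with $m \leq n$; it says nothing about whether $p^\in \supseteq q^\in$ can occur when $\rk(p) > \rk(q)$ (where $p \geq q$ is automatically false). Your sentence asserting that the reverse inclusion ``arises only from comparable elements by the same rank inequality'' is not an argument: applying the proposition with the roles of $p$ and $q$ swapped controls $q^\in \supseteq p^\in$, not $p^\in \supseteq q^\in$. Moreover, as a statement about the sets $p^\in$ this inclusion can genuinely occur: if every level $\PPb_n$ is a singleton $\{p_n\}$ with $p_n \geq_n p_{n+1}$, then $\Sp \PPb$ is a single point and all $p_n^\in$ coincide, so the map is not even injective on $\bigcup_n \PPb_n$ and inclusion on the disjoint union is only a preorder. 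So either you must rerun the selector construction from the proof of Proposition \ref{pr:LevelMin-CovMin} to produce, under appropriate hypotheses, some $S \in q^\in \setminus p^\in$, or --- as the paper implicitly does, and as suffices for every later use of the corollary (e.g.\ in the proof of Theorem \ref{th:LS}, where only consolidation patterns between designated levels are transferred) --- read the corollary as an isomorphism of the graded, level-respecting order on the disjoint unions, in which case only the $m \leq n$ case of Proposition \ref{le:GoodBehavior} is ever invoked and your argument is complete.
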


Let $A$, $B$ be covers of a set $X$, and let $a \in A$, $b \in B$. We define $b \lhd_B a$ if, for $b' \in B$, $b' \cap b \neq \emptyset$ implies $b' \subseteq a$; we write $A \rhd B$, when  the relation $\rhd_B \subseteq A \times B$ is surjective, i.e., $A$ is $\rhd_B$-refined by $B$.
\begin{proposition}
\label{pr:StarRefClosure}
Let $A$, $B$ be open covers of a topological space $X$, and let $a \in A$, $b \in B$. If $b \lhd_B a$, then $\cl{b} \subseteq a$
\end{proposition}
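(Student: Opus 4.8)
The plan is to reduce the set inclusion $\cl{b} \subseteq a$ to a pointwise statement and then exploit that $B$ is an \emph{open} cover. Concretely, it suffices to show that an arbitrary point $x \in \overline{b}$ already belongs to $a$, and for this I intend to use the defining property of $b \lhd_B a$: that every member of $B$ meeting $b$ is contained in $a$.

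First I would fix $x \in \overline{b}$. Since $B$ covers $X$, there is some $b' \in B$ with $x \in b'$. Now $b'$ is open and contains the closure point $x$, so it is an open neighbourhood of $x$; because every open neighbourhood of a point of $\overline{b}$ must meet $b$, we get $b' \cap b \neq \emptyset$. At this point the hypothesis $b \lhd_B a$ applies to $b'$ and yields $b' \subseteq a$, whence $x \in b' \subseteq a$. As $x \in \overline{b}$ was arbitrary, this gives $\cl{b} \subseteq a$.

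I do not expect a genuine obstacle here: the argument is elementary, and its entire content sits in two standard facts, namely that a point of $\overline{b}$ cannot be separated from $b$ by an open set containing it, and that the covering property of $B$ supplies such an open set through $x$. The only points requiring care are to unwind the definition of $\lhd_B$ correctly and to notice that it is the openness of the members of $B$ (rather than of $A$) that is actually used; in particular the hypothesis that $A$ is a cover plays no role in this argument.
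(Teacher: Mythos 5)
Your proof is correct and essentially identical to the paper's: both fix $x \in \cl{b}$, use the covering property of $B$ to produce an open $b' \ni x$, deduce $b' \cap b \neq \emptyset$ from $x \in \cl{b}$, and conclude $x \in b' \subseteq a$ from the definition of $\lhd_B$. One remark: you read $b \lhd_B a$ as ``every $b' \in B$ meeting $b$ is contained in $a$,'' which is the reading the paper's own proof uses, even though the definition as printed says ``$b \subseteq b'$'' --- evidently a typo that you silently (and correctly) repaired.
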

	
	\begin{proof}
	Fix $x \in \cl{b}$. As $B$ is a cover of $X$, there is $b' \in B$ such that $x \in b'$. But then $b' \cap b \neq \emptyset$, so $x \in b' \subseteq  a$.
	\end{proof}

\begin{proposition}
\label{pr:Regular}
Let $X$ be a compact $T_1$ space.  Assume that there exists a family $\mathcal{C}$ of open covers of $X$ such that every open cover of $X$ is refined by some $C \in \mathcal{C}$, and every $C \in \mathcal{C}$ is $\rhd$-refined by some $D \in \mathcal{C}$. Then $X$ is regular.
\end{proposition}

\begin{proof}
Fix $C \in \mathcal{C}$, $c \in C$ and $x \in c$. Let $C'$ be an open cover of $X$ that extends $\{c\}$, and is such that $x \not \in \bigcup \{c' \in C': c' \neq c\}$. Fix $C'' \in \mathcal{C}$ that refines $C'$. Finally, fix $D \in \mathcal{C}$ with $D \lhd C''$, and $d \in D$ such that $x \in d$. There is $c'' \in C''$ such that $d \lhd_D c''$. But $x \in c''$, and $C''$ refines $C'$, so $c'' \subseteq c$, and $x \in d \lhd_D c$. By Proposition \ref{pr:StarRefClosure}, $\cl{d} \subseteq c$.
\end{proof}	

Let $\PPb$ be an $\omega$-poset.  For a cap $C$ in $\PPb$, and $p \in \PPb$, the \emph{star} $Cp$ of $p$ is  defined as $$Cp=\{c \in C: \exists r \in \PPb (r \leq c,p) \}.$$ For $p, q \in \PPb$, we define $q \lhd_C p$ iff $r \leq p$ for every $r \in Cq$. We call $\PPb$ \emph{regular} if every cap  $C$ is $\rhd_D$-refined by some cap $D$. 

\begin{corollary}[Corollary 2.40 in \cite{BaBiVi}]
\label{co:Regular}
Let $\PPb$ be an $\omega$-poset. If $\PPb$ is regular, then $\Sp \PPb$ is regular, i.e., metrizable. 
\end{corollary}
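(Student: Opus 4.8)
The plan is to transfer regularity from the poset $\PPb$ to its spectrum $\Sp \PPb$ by showing that the cap-basis $\PPb^\in$ satisfies exactly the hypothesis of Proposition \ref{pr:Regular}, and then invoke the metrization theorem for second-countable regular (equivalently, by Proposition \ref{pr:BBV}, compact $T_1$) spaces. More precisely, I would take $\mathcal{C}$ to be the family $\{C^\in : C \text{ a cap in } \PPb\}$ of all open covers of $\Sp \PPb$ arising from caps; by Proposition \ref{pr:BBV} these are exactly the open covers of $\Sp \PPb$ coming from the cap-basis, and every open cover of a compact space is refined by a finite one, hence by such a $C^\in$. So the first hypothesis of Proposition \ref{pr:Regular} is automatic.

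The substance is the second hypothesis: given a cap $C$ in $\PPb$, I must produce a cap $D$ in $\PPb$ such that the open cover $D^\in$ is $\rhd$-refined by $C^\in$ in the topological sense, using the assumption that $\PPb$ is regular, i.e., that $C$ is $\rhd_D$-refined by some poset-level cap $D$. The key step is therefore a dictionary lemma: I would show that the order-theoretic star relation $q \lhd_C p$ in $\PPb$ corresponds, under the homeomorphism $p \mapsto p^\in$, to the topological star relation $q^\in \lhd_{C^\in} p^\in$ of Proposition \ref{pr:StarRefClosure}. Concretely, $c^\in \cap q^\in \neq \emptyset$ means some minimal selector $S$ contains both $c$ and $q$, and since $S$ is a filter (Lemma \ref{le:SelectorFilterAndContainsMin}(2)) this happens iff there is $r \in S$ with $r \leq c,q$, i.e., iff $c \in Cq$ in the poset sense. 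Thus $\{c^\in : c^\in \cap q^\in \neq \emptyset\} = (Cq)^\in$, and the requirement ``$q^\in \subseteq p^\in$ whenever $c^\in \cap q^\in \neq \emptyset$'' translates, via $q^\in \subseteq p^\in \iff q \leq p$ (which holds for a cap-basis, being order-isomorphic to $\PPb$ by Remark \ref{re:pGoesTop^in} and the identification of covers with caps) or more carefully via the definition, into ``$r \leq p$ for all $r \in Cq$,'' which is exactly $q \lhd_C p$. Hence $q \lhd_C p$ in $\PPb$ is equivalent to $q^\in \lhd_{C^\in} p^\in$ topologically.

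Granting this dictionary, the proof closes quickly: if $\PPb$ is regular, pick for the given cap $C$ a cap $D$ with $C$ $\rhd_D$-refined by $D$, meaning the relation $\lhd_D \subseteq \PPb \times \PPb$ restricted appropriately is surjective onto $C$. Passing to spectra, $C^\in$ is $\rhd_{D^\in}$-refined by $D^\in$, so the family $\mathcal{C}$ is closed under $\rhd$-refinement as required. Proposition \ref{pr:Regular} then yields that $\Sp \PPb$ is regular. Finally, $\Sp \PPb$ is second-countable compact $T_1$ by Proposition \ref{pr:BBV}, and a second-countable regular space is metrizable by Urysohn's metrization theorem (a compact regular space is normal, so $T_1 + $ regular $+$ second-countable gives metrizability), giving the stated conclusion.

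The main obstacle I anticipate is the dictionary lemma, specifically matching the \emph{quantifier structure} of the two star relations. The poset definition $q \lhd_C p$ quantifies over all $r \in Cq$ (elements $c \in C$ admitting a common lower bound with $q$), whereas the topological definition quantifies over all $c' \in C^\in$ meeting $q^\in$; reconciling these requires the filter property to turn ``$c,q$ lie in a common minimal selector'' into ``$c,q$ have a common lower bound,'' and it requires care that $Cq$ as defined (via some $r \leq c,q$, not necessarily in a selector) genuinely coincides with $\{c : c^\in \cap q^\in \neq \emptyset\}$. One direction is immediate from $r \leq c,q \Rightarrow r^\in \subseteq c^\in \cap q^\in$ with $r^\in \neq \emptyset$ (Proposition \ref{pr:LevelMin-CovMin}, noting regular $\omega$-posets relevant here are, or may be assumed, level-injective); the reverse uses the filter property. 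I would handle the case distinction between a general regular $\omega$-poset and a level-injective $\omega$-chain explicitly, since Proposition \ref{pr:LevelMin-CovMin} guaranteeing $p^\in \neq \emptyset$ is stated for the latter.
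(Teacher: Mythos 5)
Your proposal is correct and follows exactly the route the paper sets up: the paper itself gives no proof of this corollary (it is quoted from \cite{BaBiVi}), but Propositions \ref{pr:StarRefClosure} and \ref{pr:Regular} are placed immediately before it precisely so that the order-theoretic star refinement can be transported to $\Sp \PPb$ via the filter property of minimal selectors, which is what you do. Note only that the sole direction of your ``dictionary'' actually used is $d^\in \cap q^\in \neq \emptyset \Rightarrow d \in Dq$ (from Lemma \ref{le:SelectorFilterAndContainsMin}(2)) together with $d \leq c \Rightarrow d^\in \subseteq c^\in$ (minimal selectors are up-sets), both valid for arbitrary $\omega$-posets, so your worry about $r^\in = \emptyset$ and the proposed level-injective case distinction are unnecessary.
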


\begin{remark}
\label{re:MetrizablePosetIsRegular}
Using the Lebesgue number, can easily prove that if $X$ is second-countable compact metrizable space, then for every sequence of minimal covers $\PPb_n$ of $X$, $n \in \NN$, such that every $\PPb_n$ consolidates  $\PPb_{n+1}$, and every open cover of $X$ is refined by some  $\PPb_n$, the $\omega$-chain $(\bigsqcup \PPb_n, \supseteq)$ is regular.
\end{remark}

		


\section{A logic of co-valuations}
	
	\subsection{Co-valuations, refinements, and consolidations.} 
	%
	
	%
	
	
 A co-bijective relation $v \subseteq X \times Y$ is called a \emph{co-valuation} on $Y$ in $X$. The family of all co-valuations on $Y$ in $X$ is denoted by $\Val(X, Y)$.  For $v \in \Val(X,Y)$, the set $X$ will be also referred to as $X_v$. 

\begin{remark}
\label{re:Identifycotext}
Recall that, for $v \in \Val(X,Y)$, the family $[v]$ is a minimal cover of $Y$. As it will usually not cause any ambiguity, we may sometimes identify $[v]$ and $X_v$.
\end{remark}

Let $v \in \Val(X, M)$,  $w \in \Val(Y, M)$, and let $\geq \Val(X,Y)$.  We write $v \geq w$ if $[v]$ is refined by $[w]$ with pattern $\geq$, i.e., $x \ge y$ implies $[x]_v \supseteq [y]_w$. The relation $\geq^v_w \subseteq X \times Y$ is defined by
$$x \geq^v_w y \Leftrightarrow [x]_v \supseteq [y]_w.$$
We denote  $C_\geq(w)=\geq \circ w$, and call $C_\geq(w)$ a \emph{consolidation} of $w$ \emph{with pattern} $\geq$, and $w$ a \emph{fragmentation} of $C_\geq(w)$ \emph{with pattern} $\geq$. Also, $v$ is a consolidation of $w$ (and $w$ is a fragmentation of $v$) if there exists a pattern $\geq \in \Val(X,Y)$ such that $v=C_{\geq} (w)$.  
Note that $v$ is a consolidation of $w$ iff the cover $[v]$ of $M$ is a consolidation of the cover $[w]$ as defined in Section \ref{se:Constructing}. If $v$ is a consolidation of $w$, by Proposition \ref{pr:MinCoversCoInj}, we have that $\geq^v_w \in \Val(X,Y)$.


A sequence $(v_i)$, $v_i  \in \Val(X_i,M)$, of co-valuations is called an $\omega$-\emph{chain} if each $v_{i}$ consolidates $v_{i+1}$, i.e., $([v_i], \supseteq \uhr [v_i] \times [v_{i+1}])$ is an $\omega$-chain poset as in Proposition \ref{pr:MinCoversChains}.

\begin{remark}	
\label{re:CovertoVal}
	Every minimal finite cover $\PPb$ of a set $M$  gives rise to a co-valuation $v \in \Val(\PPb,M)$, where $(p,a) \in v$ iff $a \in p$. Similarly, a sequence $\PPb_i$, $i \in \NN$, of minimal finite covers of $M$  such that each $\PPb_i$ consolidates $\PPb_{i+1}$, gives rise to an $\omega$-chain of co-valuations. 
\end{remark}
	
	
	%
	%
	
	
\subsection{Structures and interpretations.} Let us select a countably infinite set of variables with a special variable $*$. A \emph{context} is a finite set of variables. In practice, we may regard any finite set as a context.
	
Formulas $\phi(X)$ in context $X$ are defined by the following recursion scheme. If $R$ is a relation symbol of arity $a(R)$ with variables contained in $X$, then
\[R, \, \top \]
are formulas in context $X$.
If $\phi(X)$, $\psi(X)$ are formulas in context $X$, then
\[ \neg\phi(X), \, \phi(X) \land \psi(X) \]
are formulas in context $X$.
Finally, if $\phi(Y)$ is a formula in context $Y$, and $\geq \in \Val(X,Y)$, then
\[ \exists^{\fr}_{\geq} \phi(Y) \]
is a formula in context $X$.

We write $\bot$ for $\neg \top$, $\phi \lor \psi$ for $\neg (\neg \phi \land \neg \psi)$, $\phi \rightarrow \psi$ for $\psi \lor \neg \phi$, $\forall^{\fr}_{\geq} \phi$ for $\neg \exists^{\fr}_\geq \neg \phi$, and $\exists^{\fr}_{X\geq Y} \phi$ to explicitly indicate the domain and co-domain of $\geq$.  Note that $\exists^{\fr}_{X\geq Y} \phi$ has a unique context $X$ (as opposed to, e.g., atomic formulas that can have multiple contexts). A type ($n$-type) $p(X)$ is a set of formulas in a fixed signature and context $X$ (of size $n$). A type $p(X)$ is complete if for every formula $\phi(X)$ either $\phi \in p(X)$ or $\neg \phi \in p(X)$. A sentence is a formula in the context $\bar{*}=\{*\}$, and a theory is a type consisting only of sentences.
	
We will also use the following conventions. If $\geq \in \Val(\bar{*},X)$, we write $\exists^{\fr}_X$ instead of $\exists^{\fr}_{\geq}$ or $\exists^{\fr}_{\bar{*} \geq X}$. Also, we omit $X$ in $\exists^{\fr}_{X\geq Y}$ if it can be unequivocally read off the formula. For example, we write $\forall^{\fr}_X \exists^{\fr}_{\geq Y} \phi$ instead of $\forall^{\fr}_{\succeq} \exists^{\fr}_{\geq} \phi$, where $\succeq \in \Val(\bar{*},X)$, $\geq \in \Val(X,Y)$.

Let $V$ be a family of co-valuations on a set $M$. We write  $V^X$ for $V \cap \Val(X,M)$, $[\bar{x}]_v$ for $\prod_k [x_k]_v$, where $\bar{x}=(x_k)$,  $x_k \in X$. Also, for $v \in \Val(X,M)$, $w \in \Val(Y,M)$, $(x_k) \geq (y_k)$ if $[x_k]_v \supseteq [ y_k]_w$, for all relevant $k$. 
	
By a \emph{proto-structure} in signature $L$,  we mean a triple $M = (M,\{R^M_\alpha\},V)$ consisting of
	
	\begin{enumerate}
		\item a first-order logic structure $(M,\{R^M_\alpha\})$ in a relational signature $L=\{R_\alpha\}$,
		\item a family $V$ of co-valuations on $M$ in contexts that is closed under  consolidations by contexts, i.e., $v \in V^Y$, $\geq \in \Val(X,Y)$ implies $C_{\geq}(v) \in V^X$, where $X$, $Y$ are contexts.
	\end{enumerate}
	Co-valuations in $V$ are sometimes referred to as \emph{admissible} co-valuations. To simplify notation, if it does not cause ambiguity, we may also refer to a proto-structure $(M,\{R^M_\alpha\},V)$ by $(M,V)$ or  by $M$.
	
	For a proto-structure $(M,V)$, and co-valuation $v \in V^X$, the type $\tp(v)$ of $v$ is the set of all formulas $\phi(X)$ in context $X$ such that  $M \models_v \phi$. For proto-structure $(N,W)$ in the same signature as $(M,V)$, a bijection $f:M \to N$ is an \emph{isomorphism} if it preserves relations, and valuations, i.e., for $f[v]=\{(x,f(a)): (x,a) \in v \}$, where $v \in V$, and $f[V]=\{f[v]: v \in V\}$, we have $f[V]=W$.
	
Given a proto-structure $M = (M, V)$, a co-valuation $v \in V^X$, and a formula $\phi(X)$, the satisfaction relation $M \vDash_v \phi$ is defined by the following recursion scheme, where $\bar{x}$ is a tuple in $X$:
	\[ M \vDash_v \top, \]
	\[	M \vDash_v R(\bar{x}) \iff [\bar{x}]_v \cap R^M \neq \emptyset, \]
	\[M \vDash_v \neg \phi \iff M \nvDash_v \phi, \]
	\[	M \vDash_v \phi \land \psi \iff M \vDash_v \phi \mbox{ and }  M \vDash_v \psi, \]
	\[ M \vDash_v \exists^{\fr}_{\geq} \phi \iff
	\exists_{w \in V} (v=C_\geq(w) \mbox{ and } M \vDash_w \phi). \]

\begin{example}
Assume that $(M,V)$ is a proto-structure, $X=\{x_0,x_1\}$ is a context, $R(x_0,x_1)$ is a binary relation symbol, $S(x_1)$ is a unary relation symbol, $v \in V^X$, and $a_0,a_1,a'_1 \in M$.  Then $a_0,a_1,a'_1 \in M$ witness that $$M \vDash_v R(x_0,x_1) \land S(x_1)$$ if $a_0\in [x]_v$, $a_1,a'_1 \in [y]_v$, and $R^M(a_0,a_1)$, $S^M(a'_1)$. Note that it is not required that  $a_1=a'_1$.  
\end{example}

\begin{example}
Assume that $(M,V)$ is a proto-structure, $X=\{x_0,x_1\}$, $Y=\{y_0,y_1,y_2\}$ are contexts, $\geq \in \Val(X,Y)$, and $R(y_0,y_1,y_2)$ is a ternary relation symbol. Consider the formula $\exists^f_\geq R(y_0,y_1,y_2)$ in context $X$, and $v \in V^X$. A co-valuation $w \in V^Y$ witnesses that $$M \vDash_v \exists^f_\geq R(y_0,y_1,y_2)$$ if $v=C_\geq(w)$, and $$M \vDash_w R(y_0,y_1,y_2),$$ i.e., there exist $a_0 \in [y_0]_w$, $a_1 \in [y_1]_w$, $a_2 \in [y_2]_w$ such that  $$R^M(a_0,a_1,a_2).$$  
\end{example}

\begin{remark}
		Let $(M,V)$ be a proto-structure in signature $L$, and let $R \in L$. Let $v \in V^X$, $w \in V^Y$, and $\geq \in \Val(X,Y)$. If $M \vDash_w R(\bar{y})$, and $\bar{x} \in X^{a(R)}$ is such that $\bar{y} \leq \bar{x}$, then $M \vDash_v R(\bar{x})$.  
\end{remark}
\medskip
	
To develop a well-behaved model theory for the logic of co-valuations, we need to work with proto-structures that satisfy certain additional requirements. A \emph{structure} in signature $L$ is a proto-structure $(M,\{R_\alpha\},V)$ in signature $L$ that satisfies the following axioms:
	
	\begin{enumerate}[(I)]
		\item $V$ is directed, i.e., for all $u,v \in V$ there is $w \in V$ that fragments $u$ and $v$,
		\item for all $R \in L$, $v \in V^X$, and $\bar{x} \in X^{a(R)}$ such that $M \vDash_v R(\bar{x})$, there are $w \in V^Y$, $\bar{y} \in Y^{a(R)}$, and $\bar{a} \in [\bar{y}]_{\pi_{w}}$ such that $\bar{x} \geq^v_w \bar{y}$, and $R^M(\bar{a})$.
	\end{enumerate} 
	
We call $w$ as in Axiom (I) a witness of Axiom (I) (for $u$, $v$). Similarly, $w$, $a$ are witnesses of Axiom (II) (for $R$, $v$, $\bar{x}$).

For a structure $(M,V)$, and co-valuation $v \in V^X$, the type $\tp(v)$ of $v$ is the set of all formulas $\phi(X)$ in context $X$ such that  $M \models_v \phi$. We say that a type $p(X)$ is realized in $(M,V)$ if there is $v \in V^X$ such that $M \vDash_v \phi$ for every $\phi \in p$; otherwise, $p$ is omitted by $M$.  A type is \emph{consistent} if there exists a structure $(M,V)$ that realizes it.  A structure that realizes a theory $T$ is called a \emph{model} of $T$.

\begin{remark}
The somewhat technical Axiom (II) is needed for the proof of the downward L\" owenheim-Skolem theorem (Theorem \ref{th:LS}) to work. Let us point out that the seemingly more natural form of Axiom (II), mimicking the requirement that co-valuations give rise to minimal covers: 

\medskip
for all $R \in L$, $v \in V^X$, and $\bar{x} \in X^{a(R)}$ such that $M \vDash_v R(\bar{x})$, there is $\bar{a} \in [\bar{x}]_{\pi_{v}}$ such that $R^M(\bar{a})$, 
\medskip

would be too restrictive for our purposes. For example, suppose that $R$ is a unary relation symbol, $R^M=\{a_0\}$ is a singleton, and $v \in V^X$ is such that there are two distinct $x,x' \in X$ with $a_0 \in [x]_v,[x']_v$.  Then $M \models_v R(x)$ but $a_0 \not \in [x]_{\pi_v}$, i.e., there is no $a \in [x]_{\pi_v}$ such that $R^M(a)$.  
\end{remark}
	
\begin{proposition}
\label{pr:Amalgamation}
Let $(M,V)$ be a structure. Then $V$ has amalgamation in the following sense. Let $v_0,v_1 \in V$, and let $\geq_0$, $\geq_1$ be such that $C_ {\geq_0} (v_0)=C_{ \geq_1} (v_1)$. Then there exist $w \in V$,  and $\succeq_0$, $\succeq_1$ such that $v_0=C_{ \succeq_0} (w)$, $v_1 =C_{\succeq_1} (w)$, and $$\geq_0 \circ \succeq_0=\geq_1 \circ \succeq_1.$$ Moreover, the above equality holds for all such $w$, and $\succeq_0$, $\succeq_1$.
\end{proposition}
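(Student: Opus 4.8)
The plan is to produce $w$ from directedness and then show that both composite patterns collapse onto a single canonical pattern; this simultaneously settles existence and the ``moreover'' clause. Write $u = C_{\geq_0}(v_0) = C_{\geq_1}(v_1) \in \Val(Z,M)$, so that by the definition of consolidation $[z]_u = \bigcup\{[x]_{v_0} : z \geq_0 x\} = \bigcup\{[x']_{v_1} : z \geq_1 x'\}$ for every $z$. For existence I would invoke Axiom (I) to get $w \in V$ fragmenting both $v_0$ and $v_1$, and take $\succeq_i = \geq^{v_i}_w$; then $v_i = C_{\succeq_i}(w)$ holds because $[x]_{v_i}$, being a union of blocks of $[w]$, equals $\bigcup\{[s]_w : [x]_{v_i} \supseteq [s]_w\}$. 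Hence the only real content is the identity $\geq_0 \circ \succeq_0 = \geq_1 \circ \succeq_1$, and since the ``moreover'' allows arbitrary admissible data, I would prove the sharper statement that $\geq_0 \circ \succeq_0 = \geq^u_w = \geq_1 \circ \succeq_1$ for every $w, \succeq_0, \succeq_1$ with $v_i = C_{\succeq_i}(w)$, where $\geq^u_w = \{(z,s) : [z]_u \supseteq [s]_w\}$. By symmetry it suffices to treat $\geq_0 \circ \succeq_0 = \geq^u_w$.

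The forward inclusion is purely formal: if $(z,s) \in \geq_0 \circ \succeq_0$, there is $x$ with $z \geq_0 x$ and $x \succeq_0 s$; the first yields $[x]_{v_0} \subseteq [z]_u$, and since $v_0 = C_{\succeq_0}(w)$ gives $[x]_{v_0} = \bigcup\{[s']_w : x \succeq_0 s'\} \supseteq [s]_w$, we get $[s]_w \subseteq [z]_u$, i.e. $(z,s) \in \geq^u_w$.

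The crux is the reverse inclusion, where I would exploit minimality of the cover $[w]$. Suppose $[s]_w \subseteq [z]_u$. As $w$ is co-bijective, $\pi_w$ is surjective, so I can pick $a \in [s]_{\pi_w}$, a point lying in $[s]_w$ and in no other block of $[w]$. Then $a \in [z]_u = \bigcup\{[x]_{v_0} : z \geq_0 x\}$, so $a \in [x]_{v_0}$ for some $x$ with $z \geq_0 x$; and since $v_0 = C_{\succeq_0}(w)$ writes $[x]_{v_0}$ as the union of the $[s']_w$ with $x \succeq_0 s'$, there is $s'$ with $x \succeq_0 s'$ and $a \in [s']_w$. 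The uniqueness built into the choice of $a$ forces $s' = s$, whence $z \geq_0 x \succeq_0 s$ and $(z,s) \in \geq_0 \circ \succeq_0$. Running the identical argument with $(v_1,\geq_1,\succeq_1)$ gives $\geq_1 \circ \succeq_1 = \geq^u_w$ as well.

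The main obstacle I anticipate is exactly this reverse inclusion: showing that a block $[s]_w$ contained in $[z]_u$ must fit inside a single block $[x]_{v_0}$ lying below $z$, rather than straddling several such blocks. The point-selection $a \in [s]_{\pi_w}$ is precisely what excludes straddling, and it is the only place where minimality of $[w]$ (equivalently, co-bijectivity) is genuinely used; the remainder is routine manipulation of compositions of relations.
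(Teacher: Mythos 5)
Your proof is correct and uses essentially the same argument as the paper: existence comes from Axiom (I), and the key step is selecting a point $a\in[s]_{\pi_w}$ (guaranteed by co-bijectivity of $w$) that lies in a unique block of $[w]$, which forces the two composite patterns to agree. Your repackaging of the ``moreover'' clause as the identity $\geq_0\circ\succeq_0=\geq^u_w=\geq_1\circ\succeq_1$ is a slightly sharper but equivalent formulation of the paper's direct inclusion-plus-symmetry argument.
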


\begin{proof}
Take any $w \in V$ that fragments $v_0$, $v_1$ (there exists one by Axiom (I)), with patterns $\succeq_0$, $\succeq_1$, respectively. Put $u=C_ {\geq_0}(v_0)$, and fix  $x_u \in [u]$, $x_{v_0} \in [v_0]$, and $x _w\in [w]$ such that $x_{u} \geq_0 x_{v_0} \succeq_0 x_w$. Fix $a \in [x_w]_{\pi_w}$. Then $a \in [x_u]_u$, so there exist $x'_w \in [w]$, and $x_{v_1} \in [v_1]$ such that  $x_{u} \geq_1 x_{v_1} \succeq_0 x'_w$. But $a \in [x_w]_{\pi_w}$, so we must have $x'_w=x_w$.
\end{proof}

	\begin{remark}
		\label{re:BoundOnZII}
		Clearly, if $w \in V^Y$ is a witness of Axiom (II) for some $R$, $v$, $\bar{x}$, we can always assume that $|Y| \leq a(R)+1$. As the next proposition shows, there is an analogous bound on the size of the context of witnesses of Axiom (I).
	\end{remark}
	
	\begin{proposition}
		\label{pr:BoundOnZI}
		Let $(M,V)$ be a structure, and let $w \in V^Z$ fragment $u \in V^X$, $v \in V^Y$. Then there is a context $Z_0$, and $\geq \in \Val(Z_0,Z)$ such that $C_\geq(w)$ also fragments $u$, $v$, and $|Z_0| \leq 2^{|X|}2^{|Y|}$.
	\end{proposition}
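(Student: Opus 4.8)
The plan is to coarsen $w$ by collapsing together exactly those cells of $[w]$ that are contained in the same cells of $u$ and of $v$. Concretely, to each $z \in Z$ I would attach the \emph{trace}
\[
\sigma(z) = (\{x \in X : [z]_w \subseteq [x]_u\},\ \{y \in Y : [z]_w \subseteq [y]_v\}) \in \PP(X) \times \PP(Y).
\]
Since $w$ fragments $u$ and $v$, the cover $[w]$ refines both $[u]$ and $[v]$, so every $[z]_w$ lies inside some $[x]_u$ and some $[y]_v$, whence both coordinates of $\sigma(z)$ are nonempty. What matters for the bound, however, is only that $\sigma$ takes values in $\PP(X) \times \PP(Y)$, a set of size $2^{|X|}2^{|Y|}$.

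Next I would set $Z_0 = \sigma[Z]$, the set of traces that actually occur, and define $\geq \subseteq Z_0 \times Z$ by $s \geq z \iff \sigma(z) = s$. The fibers of $\sigma$ partition $Z$ into nonempty classes, so $[\geq]$ is a minimal (indeed partition) cover of $Z$; hence, by Remark \ref{re:CoInjSurj}, $\geq$ is co-bijective, i.e. $\geq \in \Val(Z_0, Z)$, with $|Z_0| \le 2^{|X|}2^{|Y|}$ as required. Because $V$ is closed under consolidations by contexts, $C_\geq(w) \in V^{Z_0}$; writing $w' = C_\geq(w)$, its cells are $[s]_{w'} = \bigcup\{[z]_w : \sigma(z) = s\}$.

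It then remains to check that $w'$ still fragments $u$ (and, symmetrically, $v$). The key observation is that, since every $z$ in a fixed fiber $\sigma^{-1}(s)$ shares the same first coordinate $A$ of $s = (A,B)$, one has $[z]_w \subseteq [x]_u \iff x \in A$ for each such $z$, and therefore $[s]_{w'} \subseteq [x]_u \iff x \in A$. Using this I would compute, for each $x \in X$,
\[
\bigcup\{[s]_{w'} : [s]_{w'} \subseteq [x]_u\} = \bigcup\{[z]_w : [z]_w \subseteq [x]_u\} = [x]_u,
\]
the last equality being precisely the hypothesis that $w$ fragments $u$; moreover each $[s]_{w'}$ is contained in some $[x]_u$ (any $x \in A$ works, and $A \neq \emptyset$), so $[u]$ is refined by $[w']$. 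Thus $u$ is a consolidation of $w'$, i.e. $w'$ fragments $u$, and the same argument applied to the second coordinate handles $v$.

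I expect the only genuinely delicate points to be bookkeeping: verifying that $\geq$ is a co-valuation (which reduces to the fact that distinct nonempty fibers of $\sigma$ are disjoint, so they form a minimal cover of $Z$) and confirming that merging $w$-cells with a common trace never merges cells lying in different $u$- or $v$-cells — guaranteed exactly because the trace records all such containments. No topology or compactness enters; the statement is a purely combinatorial sharpening of the directedness in Axiom (I), improving an arbitrary common fragmentation to one whose context has controlled size.
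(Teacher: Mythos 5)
Your proof is correct and follows essentially the same route as the paper: the paper partitions $Z$ by declaring $z \sim z'$ iff $[z]_w$ and $[z']_w$ are contained in exactly the same cells of $[u] \cup [v]$, which is precisely your trace map $\sigma$, and takes $Z_0 = Z/\sim$ with the membership relation as the pattern. The only difference is that you spell out the verification that $C_\geq(w)$ still fragments $u$ and $v$, which the paper leaves to the reader.
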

	
	\begin{proof}
	We partition $Z$ by defininig $z \sim z'$ iff, for every $a \in [u] \cup [v]$, $$z \subseteq a \leftrightarrow z' \subseteq a.$$
	Let $Z_0=Z/\sim$, and let $\geq  \subseteq Z_0 \times Z$ be the belonging relation $\ni \uhr Z_0 \times Z$. Then $|Z_0| \leq 2^{|X|}2^{|Y|}$, and, obviously, $\geq$ is surjective and co-surjective. Moreover, since $Z_0$ is a partition of $Z$, it is also co-injective, i.e., $\geq \in \Val(Z_0,Z)$. We leave it to the reader to verify that $C_\geq(w)$ fragments both $u$ and $v$.

		%
		%

		%
		%
		
	\end{proof}

\begin{theorem}
	\label{th:SequenceDetermines}
	Let $L$ be a signature, let $X$, $Y$ be contexts, let $\geq \in \Val(X,Y)$, and let $\phi(X)$ be a formula. There is a formula $\psi(Y)$ such that $$M \vDash_{C_\geq(v)} \phi \iff M \vDash_v \psi$$ for every structure $(M,V)$ in signature $L$, and $v \in V^Y$.
\end{theorem}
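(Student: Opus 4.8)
The plan is to prove this by induction on the structure of $\phi$, treating the consolidation pattern $\geq \in \Val(X,Y)$ as a parameter; that is, I prove simultaneously for all contexts $X,Y$, all $\geq \in \Val(X,Y)$, and all $\phi(X)$ that a suitable $\psi(Y)$ exists. Throughout I will use the associativity identity $C_\geq(C_\alpha(u)) = C_{\geq \circ \alpha}(u)$ and the computation $[x]_{C_\geq(v)} = \bigcup_{y:\, x \geq y}[y]_v$ for $v \in V^Y$.

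For the base and Boolean cases, take $\psi = \bot$ when $\phi = \bot$. For an atomic $\phi = R(\bar x)$ with $\bar x = (x_1,\dots,x_m)$, unwinding $M \vDash_{C_\geq(v)} R(\bar x) \iff [\bar x]_{C_\geq(v)} \cap R^M \neq \emptyset$ and inserting the description of $[x]_{C_\geq(v)}$ shows this holds iff there is $\bar y \in Y^m$ with $x_k \geq y_k$ for all $k$ and $[\bar y]_v \cap R^M \neq \emptyset$; as $Y$ is finite I set
$$\psi := \bigvee\{\, R(\bar y) : \bar y \in Y^m,\ x_k \geq y_k \text{ for all } k \,\}.$$
The cases $\phi = \neg\phi_0$ and $\phi = \phi_0 \wedge \phi_1$ are immediate from the induction hypothesis, taking $\psi = \neg\psi_0$ and $\psi = \psi_0 \wedge \psi_1$.

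The crux is the fragmentation quantifier $\phi = \exists^\fr_\succeq \phi_0$, where $\succeq \in \Val(X,Z)$ and $\phi_0 = \phi_0(Z)$. By definition $M \vDash_{C_\geq(v)} \exists^\fr_\succeq \phi_0$ holds iff there is $w \in V$ with $C_\succeq(w) = C_\geq(v)$ and $M \vDash_w \phi_0$. Given such a $w$, directedness (Axiom (I)) yields a common fragmentation of $v$ and $w$, and by Proposition \ref{pr:BoundOnZI} I may replace it by one, say $u \in V^W$, with $|W| \leq 2^{|Y|}2^{|Z|}$, still fragmenting both; write $v = C_\alpha(u)$ and $w = C_\beta(u)$ with $\alpha \in \Val(Y,W)$, $\beta \in \Val(Z,W)$. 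The coherence (``Moreover'') clause of Proposition \ref{pr:Amalgamation}, applied to $v,w$ with the patterns $\geq,\succeq$, gives $\geq \circ \alpha = \succeq \circ \beta$. Applying the induction hypothesis to $\phi_0$ with pattern $\beta$ produces $\chi_{W,\beta}(W)$ with $M \vDash_{C_\beta(u)} \phi_0 \iff M \vDash_u \chi_{W,\beta}$, so $M \vDash_w \phi_0 \iff M \vDash_u \chi_{W,\beta}$, and since $v = C_\alpha(u)$ the witness $u$ shows $M \vDash_v \exists^\fr_\alpha \chi_{W,\beta}$. This dictates defining $\psi$ as the finite disjunction, over all triples $(W,\alpha,\beta)$ with $|W| \leq 2^{|Y|}2^{|Z|}$ (fixing $W = \{1,\dots,N\}$ for each admissible $N$), $\alpha \in \Val(Y,W)$, $\beta \in \Val(Z,W)$ and $\geq \circ \alpha = \succeq \circ \beta$, of the formulas $\exists^\fr_\alpha \chi_{W,\beta}$. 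For the converse, if $M \vDash_v \exists^\fr_\alpha \chi_{W,\beta}$ for one such triple, there is $u \in V^W$ with $v = C_\alpha(u)$ and $M \vDash_u \chi_{W,\beta}$; putting $w := C_\beta(u)$ (in $V$, as $V$ is closed under consolidations) gives $M \vDash_w \phi_0$ by the induction hypothesis, while $C_\succeq(w) = C_{\succeq \circ \beta}(u) = C_{\geq \circ \alpha}(u) = C_\geq(v)$, so $w$ witnesses $M \vDash_{C_\geq(v)} \exists^\fr_\succeq \phi_0$.

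I expect the last case to be the main obstacle. The witness $w$ ranges over fragmentations of $C_\geq(v)$ living in the foreign context $Z$ and need not be comparable with $v$, so $M \vDash_w \phi_0$ cannot be translated into a statement about $v$ directly. The resolution is to route through a common refinement of $v$ and $w$, converting the quantifier over $w$ into a fragmentation quantifier over $v$; the two ingredients that make this honest are the size bound of Proposition \ref{pr:BoundOnZI}, which keeps the disjunction finite and hence produces a genuine formula independent of the witness $w$, and the coherence clause of Proposition \ref{pr:Amalgamation}, which supplies $\geq \circ \alpha = \succeq \circ \beta$ and thereby the equivalence $C_\succeq(w) = C_\geq(v)$ needed in both directions. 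A minor routine point is to check that the patterns $\alpha,\beta$ extracted from $u$ are genuinely co-bijective, i.e.\ lie in the relevant $\Val$, which follows from $u$ fragmenting $v$ and $w$.
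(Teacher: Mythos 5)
Your proposal is correct and follows essentially the same route as the paper's proof: the same atomic-case disjunction $\bigvee\{R(\bar y):\bar x\geq\bar y\}$, and in the quantifier case the same combination of passing to a common fragmentation via Axiom (I), bounding its context by Proposition \ref{pr:BoundOnZI} to keep the disjunction finite, and using the coherence clause of Proposition \ref{pr:Amalgamation} to get $\geq\circ\alpha=\succeq\circ\beta$, yielding a formula of the form $\bigvee_i\exists^{\fr}_{\geq_i}\psi_i$. You are in fact somewhat more explicit than the paper about restricting the disjunction to coherent pairs and about verifying the converse direction, which is a welcome addition rather than a deviation.
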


\begin{proof}
First observe that, for every structure $(M,V)$ in signature $L$, $v \in V^Y$, and $\geq \in \Val(X,Y)$,
	\[ M \models_{C_\geq(v)} R(\bar{x}) \iff M \models_v \bigvee \{ R(\bar{y}): \bar{y} \in Y^{a(R)}, \, \bar{x} \geq \bar{y}  \},  \]
	so the statement of the theorem holds for atomic formulas. The rest of the proof goes by induction on the complexity of formulas. For the connectives $\neg$ and $\land$, the inductive step is straightforward.
	Fix a formula of the form $\exists^{\fr}_\succeq \phi$, where $\succeq \in \Val(X,W)$. Fix a structure $(M,V)$, and $v \in V^Y$.  If $M \vDash_{C_\geq(v)} \exists^{\fr}_\succeq \phi$, then, by Axiom (I), there is a context $Z$,  $w \in V^Z$, $\geq_0 \in \Val(Y,Z)$, and $\succeq_0 \in \Val(W,Z)$ such that $M \vDash_{C_{\succeq_0}(w)} \phi$, and $C_{\geq_0}(w)=v$. 
	Moreover, by Proposition \ref{pr:Amalgamation}, we have $$\succeq \circ \succeq_0 = \geq \circ \geq_0,$$ which means that $C_{\succeq_0}(w)$ fragments $C_{\geq \circ \geq_0}(w)=C_{\geq}(v)$ with pattern $\succeq$. 
	
	Now, we observe that, by Remark \ref{re:BoundOnZII} and  Proposition \ref{pr:BoundOnZI}, we can choose $Z$ of size that is commonly bounded for all $(M,V)$, and $v \in V^Y$. This means that there are only finitely many ways of choosing $\geq_0$, $\succeq_0$ as above. Let $\geq_i$, $\succeq_i$, $i \leq n$, enumerate all of them, and let $\psi_i$ be formulas given by the inductive assumption for $\phi$, $w$, and $\succeq_i$, $i \leq n$. 
	Then the formula $$\psi=\bigvee_{i \leq n} \exists^{\fr}_{\geq_i} \psi_i$$ is as required.
\end{proof}

Let us  introduce a new quantifier $\exists^{\re}_{\geq}$ that involves the refining relation rather than the fragmentation relation:
\[ M \vDash_v \exists^{\re}_{\geq} \phi \iff
\exists_{w \in V}[v \geq w \mbox{ and } M \vDash_w \phi]. \]
As it is more often used than $\exists^{\fr}_{\geq}$, especially in topological settings,  we will denote it  simply by $\exists_{\geq}$ (as well as write $\forall_\geq \phi$ for $\neg \exists_\geq \neg \phi$). The following corollary shows that  $\exists^{\re}_{\geq}$ is definable in the logic of co-valuations.

\begin{corollary}
	Let $L$ be a signature, let $X$, $Y$ be contexts, let $\geq \in \Val(X,Y)$, and let $\phi(Y)$ be a formula. There is a formula $\psi(X)$ such that $$M \vDash_v \exists^{\re}_\geq \phi \iff M \vDash_v  \psi$$ for every structure $(M,V)$ in signature $L$, and $v \in V^X$.
\end{corollary}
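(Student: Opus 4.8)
The plan is to reduce the refining quantifier $\exists^{\re}_\geq$ to the already-treated fragmentation quantifier $\exists^{\fr}$ by interposing a common fragmentation. Suppose $M \vDash_v \exists^{\re}_\geq \phi$, witnessed by some $w \in V^Y$ with $v \geq w$ and $M \vDash_w \phi$. Since both $v$ and $w$ lie in $V$, Axiom (I) provides a common fragmentation, and by Proposition \ref{pr:BoundOnZI} I may take it to be some $u \in V^Z$ with $|Z| \le 2^{|X|}2^{|Y|}$, so that $v = C_\alpha(u)$ and $w = C_\beta(u)$ for the canonical patterns $\alpha = \geq^v_u \in \Val(X,Z)$ and $\beta = \geq^w_u \in \Val(Y,Z)$. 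The key observation is that the refining relation $v \geq w$ translates into a purely combinatorial condition on $(\alpha,\beta,\geq)$: call $(\alpha,\beta)$ \emph{$\geq$-compatible} if $x \geq y$ and $y \mathrel{\beta} z$ together imply $x \mathrel{\alpha} z$. Indeed, if $v \geq w$ and $y \mathrel{\beta} z$, then $[z]_u \subseteq [y]_w \subseteq [x]_v$ whenever $x \geq y$, so $x \mathrel{\alpha} z$; thus the patterns arising from a genuine refinement are always $\geq$-compatible.

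Conversely, $\geq$-compatibility recovers the refinement with no further hypothesis: if $v = C_\alpha(u)$, $w = C_\beta(u)$, and $(\alpha,\beta)$ is $\geq$-compatible, then for $x \geq y$ every piece $[z]_u$ with $y \mathrel{\beta} z$ also satisfies $x \mathrel{\alpha} z$, so $[y]_w = \bigcup\{[z]_u : y \mathrel{\beta} z\} \subseteq \bigcup\{[z]_u : x \mathrel{\alpha} z\} = [x]_v$, i.e. $v \geq w$. Note also that $w = C_\beta(u)$ lies in $V$ by closure under consolidations, so it is a legitimate witness. Hence, for each fixed $\geq$-compatible pair $(\alpha,\beta)$ over a context $Z$, the statement ``there exists $u \in V^Z$ with $v = C_\alpha(u)$ and $M \vDash_{C_\beta(u)} \phi$'' captures exactly the existence of a witness $w = C_\beta(u)$ for $\exists^{\re}_\geq \phi$ obtained from a common fragmentation.

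It remains to express this in the language. Applying Theorem \ref{th:SequenceDetermines} to $\phi$ and the pattern $\beta$ yields a formula $\psi_\beta(Z)$ with $M \vDash_{C_\beta(u)} \phi \iff M \vDash_u \psi_\beta$ for every structure and every $u \in V^Z$; consequently $M \vDash_v \exists^{\fr}_\alpha \psi_\beta$ holds iff there is $u \in V^Z$ with $v = C_\alpha(u)$ and $M \vDash_{C_\beta(u)} \phi$. I then set
$$\psi \;=\; \bigvee \bigl\{\, \exists^{\fr}_\alpha \psi_\beta \;:\; (\alpha,\beta) \text{ is a $\geq$-compatible pair over some } Z \text{ with } |Z| \le 2^{|X|}2^{|Y|} \,\bigr\}.$$
By Remark \ref{re:BoundOnZII} and Proposition \ref{pr:BoundOnZI} this bound makes the index set finite (up to relabelling of $Z$), so $\psi$ is a genuine formula in context $X$, and the two directions above give $M \vDash_v \exists^{\re}_\geq \phi \iff M \vDash_v \psi$.

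The main obstacle is conceptual rather than computational: refinement is strictly weaker than fragmentation (a cover may refine another without each coarse piece being a union of fine pieces), so one cannot read $\exists^{\re}_\geq$ off $\exists^{\fr}$ directly. The device of passing to a common fragmentation $u$, together with the translation of refinement into $\geq$-compatibility of the resulting consolidation patterns, is precisely what bridges this gap; the only quantitative point to watch is that Proposition \ref{pr:BoundOnZI} keeps $Z$ bounded so that the disjunction stays finite.
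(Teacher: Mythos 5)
Your proof is correct and takes essentially the same route as the paper's: pass to a common fragmentation via Axiom (I), use Proposition \ref{pr:BoundOnZI} to bound its context so that only finitely many pattern pairs can occur, translate $\phi$ along the $Y$-side pattern using Theorem \ref{th:SequenceDetermines}, and take a finite disjunction of $\exists^{\fr}$-formulas. Your explicit ``$\geq$-compatibility'' condition is simply a spelled-out version of the characterization of admissible pattern pairs that the paper leaves implicit.
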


\begin{proof}
We argue as in the proof of Theorem \ref{th:SequenceDetermines}. There are contexts $Z_i$, and $\geq_i$,  $\succeq_i$, $i \leq n$, such that, for every structure $(M,V)$, we have that $v \in V^X$, and $w \in V^Y$ refines $v$ with pattern $\geq$ iff there is $i \leq n$, and $w' \in V^{Z_i}$ that fragments $v$, $w$ with patterns $\geq_i$, $\succeq_i$, respectively.  Let $\psi_i$ be given by Theorem \ref{th:SequenceDetermines} for $\geq_i$ and $\phi$, $i \leq n$. Then the formula $$\psi=\bigvee_{i \leq n} \exists^{\fr}_{\geq_i} \psi_i$$ is as required.
\end{proof}

A structure $(M,V)$ is called an $\omega$-\emph{structure} if there exists an $\omega$-chain $(v_i)$  of admissible co-valuations generating $V$, i.e., $V=\langle \{v_i\} \rangle$, i.e., every admissible co-valuation is a consolidation of some $v_i$ by a context. We will call such $(v_i)$ a \emph{core of} $V$.  As bijections preserve consolidations, we have:
\begin{remark}
\label{re:CoreIsomorphism}
If $\omega$-structures $(M,V)$, $(N,W)$ have the same signature, $(v_i)$, $(w_i)$ are cores of $(M,V)$, $(N,W)$, respectively, $f:M \to N$ is  a bijection that preserves relations, and $f[v_i]=w_i$, $i \in \NN$, then $f$ is an isomorphism of $(M,V)$ and $(N,W)$.
\end{remark}

\subsection{Compact structures}

We call a structure $(M,V)$ \emph{compact} if
	\begin{enumerate}
		\item there exists a compact $T_1$ topology $\tau_V$ on $M$ such that every $[v]$, $v \in V$, is a $\tau_V$-open cover of $M$, and every $\tau_V$-open cover of $M$ is refined by some $[v]$, where $v \in V$,
		\item relations are closed in $\tau_V$.
	\end{enumerate}
\begin{remark}
By Proposition \ref{pr:SeqGivesBasis}, the topology $\tau_V$ is generated by its basis $\bigcup_{v \in V} [v]$. In the case that $(M,V)$ is a compact $\omega$-structure, by Remark \ref{re:SeqGivesCapBasis}, this is a cap-basis.
\end{remark}

If $\tau_V$ is metrizable (connected, $0$-dimensional, etc.), we call $(M,V)$ a metrizable (connected, $0$-dimensional, etc.) structure. For compact structures $(M,V)$, $(N,W)$ in the same signature, a bijection $f:M \to N$ is a \emph{topological isomorphism} if it preserves relations, and is a homeomorphism of $(M,\tau_V)$ and $(N,\tau_W)$.

	
\begin{proposition}
\label{pr:CompactAreStructures}
Let $(M,\tau)$ be a compact $T_1$ space, let $\{R^M_\alpha\}$ be a family of closed relations on $M$, and let $\mathcal{C}$ be a family of minimal open covers of $M$ such that for any $A,B \in \mathcal{C}$ there is $C \in \mathcal{C}$ that fragments $A$, $B$, and every open cover of $M$ is refined by some $C \in \mathcal{C}$. Let $V$ be the family generated by of all co-valuations defined as in Remark \ref{re:CovertoVal}, using covers in $\mathcal{C}$. Then $(M,\{R^M_\alpha\},V)$ is a compact structure in signature $L=\{R_\alpha\}$ such that $\tau_V=\tau$. 
\end{proposition}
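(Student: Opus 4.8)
The plan is to write $v_C$ for the co-valuation attached to a cover $C \in \mathcal{C}$ as in Remark \ref{re:CovertoVal}, so that $V$ is the closure of $\{v_C : C \in \mathcal{C}\}$ under consolidations. Since consolidation is reflexive and transitive, every $v \in V$ will in fact be a consolidation of a single $v_C$, and this is the fact I would use repeatedly. The proto-structure requirements are immediate—$(M,\{R^M_\alpha\})$ is a first-order structure and $V$ is closed under consolidations by construction—so what remains is to verify Axioms (I) and (II), the two clauses of compactness, and the identity $\tau_V = \tau$, taking $\tau_V := \tau$ throughout.

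I would dispatch the routine parts first. For Axiom (I), given $u, v \in V$, I would pick $A, B \in \mathcal{C}$ with $u$ consolidating $v_A$ and $v$ consolidating $v_B$, take $C \in \mathcal{C}$ fragmenting both $A$ and $B$ by hypothesis, and conclude by transitivity that $v_C \in V$ fragments $[u]$ and $[v]$. For the topology, each $[v]$, $v \in V$, consists of unions of members of some $C \in \mathcal{C}$, hence is a $\tau$-open cover, and by cofinality every $\tau$-open cover is refined by some $[v_C]$. I would also check that $\bigcup_{v \in V}[v]$ is a basis of $\tau$: given $\tau$-open $O \ni x$, refine $\{O, M \setminus \{x\}\}$ (open since $M$ is $T_1$) by some $C \in \mathcal{C}$ and take $d \in C$ with $x \in d$, so that $d \subseteq O$. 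With the $R^M_\alpha$ closed by hypothesis, this yields both clauses of compactness and $\tau_V = \tau$.

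The real content is Axiom (II). Fixing $R$, $v \in V^X$ and $\bar{x}$ with $M \vDash_v R(\bar{x})$, I would choose a witness $\bar{a} \in R^M$ with $a_k \in [x_k]_v =: U_k$, let $b_1, \dots, b_l$ enumerate the distinct values among the $a_k$, and set $V_j = \big(\bigcap_{k : a_k = b_j} U_k\big) \setminus \{b_{j'} : j' \neq j\}$, an open neighbourhood of $b_j$ (using $T_1$). Using cofinality I would pick a minimal cover $C' \in \mathcal{C}$ refining the common refinement of $[v]$ and the covers $\{V_j, M \setminus \{b_j\}\}$. Then every member of $C'$ containing $b_j$ lies in $V_j$, so the stars $D_j := \bigcup\{d \in C' : b_j \in d\}$ satisfy $b_j \in D_j \subseteq U_k$ whenever $a_k = b_j$, while $b_j \notin D_{j'}$ for $j' \neq j$; in particular the sets $S_j = \{d \in C' : b_j \in d\}$ are pairwise disjoint.

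The final step—and the one I expect to be the crux—is to \emph{consolidate} $C'$ so that each $b_j$ becomes privately covered. I would let $[w]$ consist of the $D_j$ together with the members of $C'$ in no $S_j$; this is a consolidation of $C'$ along the partition $\{S_j\} \cup \{\{d\} : d \notin \bigcup_j S_j\}$, and I would check it is a minimal cover, with $b_j$ a private point of $D_j$ and the old $C'$-private point of each remaining cell still private in $[w]$ (which also shows no such cell sits inside a $D_j$), so that $w \in V$. Setting $y_k := D_j$ where $b_j = a_k$ gives $[y_k]_w = D_j \subseteq U_k = [x_k]_v$, i.e.\ $\bar{x} \geq^v_w \bar{y}$, together with $a_k \in [y_k]_{\pi_w}$; since $R^M(\bar{a})$ holds, $w, \bar{y}, \bar{a}$ witness Axiom (II). The obstacle is exactly that a single refinement cannot by itself privatize the witness, since refining may split $\bar{a}$ across overlapping cells; the resolution is that closure of $V$ under consolidations lets one re-merge each star into a single cell, after first refining finely enough—via cofinality—to separate the distinct coordinates and trap the stars inside the $U_k$.
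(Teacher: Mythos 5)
Your proposal is correct and follows essentially the same route as the paper: the paper also reduces everything to Axiom (II) and establishes it by separating the witness points with small open neighbourhoods inside the $[x_k]_v$, refining via the cofinal family $\mathcal{C}$, and then consolidating the fine cover so that each witness point becomes private to a single cell. (Your handling of repeated coordinates among the $a_k$ via the distinct values $b_j$ is in fact slightly more careful than the paper's.)
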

	
\begin{proof}
 We only need to verify Axiom (II). Fix $v \in V$, a relation $R^M_\alpha \subseteq M^m$, an $m$-tuple $\bar{x}=(x_1, \ldots, x_m)$ in $[v]$, and an $m$-tuple $\bar{a}=(a_1, \ldots, a_m)$ in $M$ such that $a_k \in x_k$ and $R^M_\alpha(\bar{a})$. 
Because each $x_k$ is open, we can find open $z'_k$ such that $a_k \in z'_k \subseteq x_k$ and $a_l \not \in z'_k$, if $a_l \neq a_k$. Fix a minimal open cover $Z'$ of the compact subspace $M \setminus \bigcup_k z_k$ consisting of sets $z'$ such that $z' \cap \{a_k\} =\emptyset$, $k \leq m$, and $Z''=Z' \cup \{z'_k\}$ refines $v$. Finally, fix $w' \in V$ that refines $Z''$, and fragments $v$. 
One can easily verify that $w=C_{\geq^{Z''}_{w'}}(w')$, $\bar{a}$ are witnesses of Axiom (II) for $v$, $\bar{x}$, $R^M_\alpha$.
\end{proof}

By Theorem \ref{th:CoinitialCapBasis}, we get
\begin{corollary}
\label{co:CompactAreStructures}
Every compact $T_1$ space with a family of closed relations can be presented as a compact structure.
\end{corollary}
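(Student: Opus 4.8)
The goal is Corollary \ref{co:CompactAreStructures}: every compact $T_1$ space $(M,\tau)$ equipped with a family of closed relations $\{R^M_\alpha\}$ can be presented as a compact structure in the logic of co-valuations.

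The plan is to obtain this as a direct application of Proposition \ref{pr:CompactAreStructures} together with Theorem \ref{th:CoinitialCapBasis}. Proposition \ref{pr:CompactAreStructures} already does all the real work: given a compact $T_1$ space with closed relations and a family $\mathcal{C}$ of minimal open covers that is \emph{directed under fragmentation} (any two members are fragmented by a common third) and \emph{cofinal in the refinement order} (every open cover is refined by some member of $\mathcal{C}$), it produces from $\mathcal{C}$ a compact structure whose intrinsic topology $\tau_V$ coincides with $\tau$. So the only thing left to supply is a family $\mathcal{C}$ of minimal open covers with these two closure properties. Note that second-countability is \emph{not} assumed in the statement of Proposition \ref{pr:CompactAreStructures}, so I must be careful to extract from Theorem \ref{th:CoinitialCapBasis} only what holds for an arbitrary compact $T_1$ space.

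First I would set $\mathcal{C}$ to be the family of \emph{all} minimal open covers of $M$. The first part of Theorem \ref{th:CoinitialCapBasis} states precisely that for any two minimal open covers $A, B$ of a compact $T_1$ space there is a minimal open cover $C$ fragmenting both; this gives the directedness-under-fragmentation hypothesis of Proposition \ref{pr:CompactAreStructures} immediately. For the cofinality hypothesis, I would use compactness and the $T_1$ separation property: given any open cover $\mathcal{U}$ of $M$, first extract a finite subcover by compactness, and then shrink it to a minimal open cover $C$ (discard members whose removal still leaves a cover; since the cover is finite this terminates in a minimal one) that still refines $\mathcal{U}$ — each shrunk set is contained in a member of $\mathcal{U}$. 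Thus every open cover of $M$ is refined by a member of $\mathcal{C}$. With both hypotheses verified, Proposition \ref{pr:CompactAreStructures} yields a compact structure $(M, \{R^M_\alpha\}, V)$ with $\tau_V = \tau$, which is exactly the desired presentation.

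The main subtlety — what I would flag as the one place to be careful rather than a genuine obstacle — is matching the quantifier ``every open cover is refined by some $C \in \mathcal{C}$'' to the right notion: I need refinement of arbitrary (not merely finite) open covers, and I should confirm that the minimalization step preserves the refining relation and yields a genuinely \emph{minimal} cover, as required both by the definition of co-valuation (via Remark \ref{re:CovertoVal}) and by the compactness clause in the definition of a compact structure. Everything else is a routine appeal to the two cited results; indeed, the phrase ``By Theorem \ref{th:CoinitialCapBasis}, we get'' preceding the corollary signals that the intended proof is exactly this combination, so the write-up can be kept to a sentence or two once the cofinality observation is made explicit.
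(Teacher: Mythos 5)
Your proposal is correct and is exactly the paper's intended argument: the corollary is derived by feeding the family of all minimal open covers into Proposition \ref{pr:CompactAreStructures}, with the fragmentation-directedness supplied by the first part of Theorem \ref{th:CoinitialCapBasis} and cofinality supplied by extracting a finite subcover and discarding redundant members. Your explicit check that the minimization step yields a genuinely minimal cover refining the original one is a detail the paper leaves implicit, but it is the right one to verify.
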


\begin{corollary}
\label{co:SCCompactAreO-Structures}
Let $(M,\tau)$ be a second-countable compact $T_1$ space, let $\{R^M_\alpha\}$ be a family of closed relations on $M$, and let $\PPb_i$, $i \in \NN$, be a sequence of minimal open covers of $M$ such that every $\PPb_i$ consolidates $\PPb_{i+1}$, and every open cover of $M$ is refined by some $\PPb_i$. Then $(M,\{R^M_\alpha\},V)$, where $V=\langle \{v_i\} \rangle$, and $v_i$ are defined as in Remark \ref{re:CovertoVal}, is a compact $\omega$-structure in signature $L=\{R_\alpha\}$ such that $\tau_V=\tau$.
\end{corollary}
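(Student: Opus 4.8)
The plan is to reduce the statement to Proposition \ref{pr:CompactAreStructures} by presenting the consolidation chain $\{\PPb_i : i \in \NN\}$ as a suitable family $\mathcal{C}$ of minimal open covers, and then to upgrade the compact structure it produces to an $\omega$-structure by recognizing the canonical sequence $(v_i)$ as a core of $V$.

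First I would set $\mathcal{C} = \{\PPb_i : i \in \NN\}$ and check that it meets the hypotheses of Proposition \ref{pr:CompactAreStructures}. Two conditions are required there. The refinement condition, that every open cover of $M$ is refined by some member of $\mathcal{C}$, is immediate from the hypothesis that every open cover is refined by some $\PPb_i$. The fragmentation--amalgamation condition, that for any $A, B \in \mathcal{C}$ there is $C \in \mathcal{C}$ fragmenting both, is where the chain structure does the work: since each $\PPb_i$ consolidates $\PPb_{i+1}$ and consolidation is transitive (if $a \in A$ is a union of elements of $B$ and each of those is a union of elements of $C$, then $a$ is a union of elements of $C$ contained in it), the cover $\PPb_n$ fragments $\PPb_i$ whenever $i \le n$. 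Hence, given $A = \PPb_i$ and $B = \PPb_j$, the single cover $C = \PPb_{\max(i,j)}$ fragments both, so the required amalgam is supplied directly by the chain.

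With $\mathcal{C}$ verified, I would apply Proposition \ref{pr:CompactAreStructures} to conclude that $(M, \{R^M_\alpha\}, V)$ is a compact structure with $\tau_V = \tau$, noting that the family $V$ generated by the co-valuations attached to the $\PPb_i$ via Remark \ref{re:CovertoVal} (these are genuine co-valuations precisely because the $\PPb_i$ are minimal) is exactly the family $\langle \{v_i\} \rangle$ of the statement. It then remains only to verify the $\omega$-structure clause. By Remark \ref{re:CovertoVal}, since each $\PPb_i$ consolidates $\PPb_{i+1}$, the sequence $(v_i)$ is an $\omega$-chain of admissible co-valuations; since by construction it generates $V$, it is a core, and therefore $(M, V)$ is a compact $\omega$-structure.

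Because essentially all of the substantive content -- the construction of $\tau_V$, the verification of Axioms (I) and (II), and the identification $\tau_V = \tau$ -- is already carried by Proposition \ref{pr:CompactAreStructures}, I do not expect a substantial obstacle. The only point requiring genuine care is the fragmentation--amalgamation clause, and this reduces to transitivity of consolidation together with the observation that a consolidation chain is linearly ordered by fineness, so that one can always take the finer of two covers as their common fragmentation.
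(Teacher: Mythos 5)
Your proposal is correct and is exactly the derivation the paper intends: the corollary is stated without proof as an immediate consequence of Proposition \ref{pr:CompactAreStructures} applied to $\mathcal{C}=\{\PPb_i\}$, with directedness supplied by transitivity of consolidation along the chain, and the $\omega$-structure clause supplied by recognizing $(v_i)$ as a core of $V$. No gaps.
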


\begin{remark}
If $(M,V)$ is a compact $\omega$-structure in signature $L$, and $(v_i) $ is a core of $V$, then, by closedeness of the relations on $M$, we have, for every $R \in L$,
\[R^{M}(\bar{a}) \iff \forall i \in \NN \, \forall \bar{x} \in X^{a(R)}_{v_i} (\bar{a} \in [\bar{x}]_{v_i} \rightarrow M\models_{v_i} R(\bar{x})).\]
\end{remark}

Let $(M,V)$ be a structure in signature $L$, and let $(v_i) $ be an $\omega$-chain of  admissible co-valuations. Let $\PPb$ be the $\omega$-chain poset $(\bigsqcup [v_i], \supseteq)$. 
%
%
We define a compact $\omega$-stucture $(N,W)$, called \emph{the compact $\omega$-structure induced by} $(v_i)$, as follows. We put $N=\Sp \PPb$, define co-valuations $w_i \in \Val(X_{v_i},N)$ induced by $v_i$, by $$(x,S) \in w_i \iff x \in S,$$ (i.e., $w_i$ is defined essentially as in Remark \ref{re:CovertoVal}, using $[v_i]^\in$), and put $W=\langle \{w_i\} \rangle$. Finally, for $R \in L$ of arity $m$, $S_k \in N$, $k \leq m$, we define 
\[R^{N}(S_1, \ldots, S_m) \iff M \models_{v_i} R(x_1, \ldots, x_m)\]
for all $i \in \NN$, and $x_k$, $k \leq m$, such that $x_k \in S_k$. Clearly, all $R^N$ are closed in $\tau_W$. By Proposition \ref{pr:LevelMin-CovMin}, and Corollary \ref{co:SCCompactAreO-Structures}, $(N,W)$ is a compact $\omega$-structure.

\begin{remark}
\label{re:SpectraAreEnough}
If $(M,V)$ is a compact $\omega$-structure, and $(v_i)$ is a core of $V$, then, by Proposition \ref{pr:HomeoSpectrum}, Remark \ref{re:pGoesTop^in}, and Remark \ref{re:CoreIsomorphism}, the compact $\omega$-structure induced by $(v_i)$ is isomorphic with $(M,V)$.
\end{remark}

	\subsection{Properties of compact structures}

\begin{proposition}
	\label{pr:FormulaProp}	
	Let $(M,V)$ be a compact structure, let $X$ be a context, let $x_0 \in X$, $X_0 \subseteq X$, where $|X_0|>1$,  and $v \in V^X$. The following properties of $v$ can be expressed by a single formula:
	
	\begin{enumerate}
		\item $\bigcap [v]_{X_0}= \emptyset$, 
		\item $[v]$ is a partition,
		\item $[x_0]_v$ is a singleton.
	\end{enumerate}
\end{proposition}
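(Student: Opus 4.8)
The plan is to express each property through the refining quantifier $\exists_\geq$ (which is legitimate, since it was shown above to be definable in the logic), exploiting that in a compact structure $\bigcup_{v\in V}[v]$ is a basis, every open cover is refined by some admissible $[v]$, and $V$ is directed (Axiom (I)) and closed under consolidations. I would first reduce (2) to (1): since $[v]$ is a minimal cover, it is a partition iff $[x]_v\cap[x']_v=\emptyset$ for all distinct $x,x'\in X$, i.e. iff $\bigcap[v]_{\{x,x'\}}=\emptyset$ for every such pair. Hence a formula for (1) (applied with $X_0=\{x,x'\}$) yields (2) as a finite conjunction, and it suffices to handle (1) and (3).

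For (1), I would establish the geometric characterization: $\bigcap[v]_{X_0}\neq\emptyset$ iff there is an admissible $w$ refining $v$ with some cell $[y_\ast]_w\subseteq\bigcap_{x\in X_0}[x]_v$, equivalently $x\geq^v_w y_\ast$ for all $x\in X_0$. The backward direction is immediate, as $w$-cells are nonempty. For the forward direction, pick $a\in\bigcap_{X_0}[x]_v$; since $\bigcup_{v}[v]$ is a basis, a basic open $c\ni a$ lies inside the open intersection, and by directedness I choose $w\in V$ refining both $v$ and the co-valuation carrying $c$; the $w$-cell of any point uniquely covered by $c$ is then contained in $c$, hence in the intersection. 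To convert this into a single formula I would bound the context of the witness, using the signature-partition of Proposition \ref{pr:BoundOnZI} (partition $X_w$ according to which family of $[v]$-cells a $w$-cell lies in, keep the distinguished cell separate, merge the rest) together with closure of $V$ under consolidations, producing an admissible $w^\ast$ refining $v$ with $|X_{w^\ast}|\le 2^{|X|}$ still carrying a cell inside the intersection. Thus $\bigcap[v]_{X_0}\neq\emptyset$ is equivalent to $M\models_v\bigvee_{\geq\in P}\exists_\geq\top$, where $P$ is the finite set of patterns $\geq\in\Val(X,Y)$ with $|Y|\le 2^{|X|}$ admitting some $y_\ast$ with $x\geq y_\ast$ for all $x\in X_0$; property (1) is then the negation $\bigwedge_{\geq\in P}\neg\exists_\geq\top$.

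For (3), I would argue analogously that $[x_0]_v$ fails to be a singleton iff there is an admissible $w$ refining $v$ with two \emph{distinct} cells $[y]_w\neq[y']_w$ both contained in $[x_0]_v$. Indeed, if $[x_0]_v=\{a\}$ then any $w$-cell inside it equals $\{a\}$, and minimality of $[w]$ forbids two such cells, so at most one cell lies inside; conversely two distinct points $a\neq b$ of $[x_0]_v$ are separated by basic opens inside $[x_0]_v$ (using $T_1$), and a common refinement furnished by directedness places $a,b$ in distinct $w$-cells, each inside $[x_0]_v$. The context is bounded as in (1), except that now I keep \emph{both} distinguished cells unmerged and collapse the remainder by signature, giving $|X_{w^\ast}|\le 2^{|X|}+2$ and preserving minimality since the kept cells retain their uniquely covered points. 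Thus non-singletonness equals $M\models_v\bigvee_{\geq\in Q}\exists_\geq\top$ for a finite family $Q$ of patterns with two distinct elements below $x_0$, and (3) is its negation.

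The main obstacle, and the only genuinely delicate point, is precisely this \emph{uniform bounding of the context of the witnessing co-valuation}: a priori the admissible refinement supplied by the covering property of a compact structure may be arbitrarily large, so a naive translation would require an infinite disjunction over patterns. The signature-partition consolidation of Proposition \ref{pr:BoundOnZI}, combined with closure of $V$ under consolidations, is what collapses this to finitely many patterns while preserving both the containment of the distinguished cell(s) in the relevant $[v]$-cells and the minimality of the resulting cover. Verifying that these consolidations remain admissible, stay minimal, and keep the marked cells distinct is therefore the crux of the argument.
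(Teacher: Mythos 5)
Your proof is correct and follows essentially the same route as the paper: reduce (2) to (1), characterize each property by the existence of an admissible refinement carrying one (resp.\ two) distinguished cell(s) inside $\bigcap [v]_{X_0}$ (resp.\ $[x_0]_v$), and make this expressible via the quantifier $\exists_\geq$ by consolidating the witnessing refinement down to a context of uniformly bounded size. The only difference is cosmetic: the paper consolidates all the way down to the single context $X \cup \{y_0\}$ (resp.\ $X \cup \{y_0,y_1\}$), so that one fixed pattern $\geq$ and the single formula $\forall_{\geq} \bot$ suffice, in place of your finite disjunction over patterns with contexts of size up to $2^{|X|}$ obtained from the signature-partition of Proposition \ref{pr:BoundOnZI}.
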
		

\begin{proof}
	
	We show (1). Define $Y=X \cup \{y_0\}$, where $y_0 \not \in X$, and $\geq \in \Val(X,Y)$ by putting $x \geq y$ iff $x=y$ or ($x \in X_0$ and $y=y_0$). Define
	\[ \text{HasEmptyIntersection}_{X_0}(X):= \forall_{\geq} \bot.\]
	Clearly, if $[v]_{X_0}$ has empty intersection, then $$M \models_v \text{HasEmptyIntersection}_{X_0}(X).$$ To see the other direction, suppose that $c=\bigcap [v]_{X_0 } \neq \emptyset$. Fix a minimal open cover $C$ of $M$ extending $\{c\}$,  and  let $w' \in V^Z$ be a fragmentation of $v$ such that $[w']$ refines $C$. Fix $y_0 \in Z$ such that $[y_0]_{w'} \subseteq c$. Note that, by minimality of $[v] $, for every $x \in X$, there is $z \in Z$ such that $z \neq y_0$, $[x]_v \supseteq [z]_{w'}$, and $[x']_v \supseteq [z]_{w'}$ implies $x=x'$.  Finally, let $\succeq \in \Val(Y,Z)$ be a pattern defined by $y \succeq z$ iff $y=z=y_0$ or ($z \neq y_0$ and  $[x]_v \supseteq [z]_{w'}$). Then $w=C_\succeq (w')$ witnesses that $M  \models_v   \exists_{\geq} \top$.

	
	In order to show (2), we define 
	\[\text{IsAPartition}(X):= \bigwedge_{\{x_1,x_2 \in X: x_1 \neq x_2\}} \text{HasEmptyIntersection}_{\{x_1,x_2\}}(X).\]
	Finally, we show (3). Let $Y=X \cup \{y_0,y_1\}$, where $y_0,y_1\ \not \in X$, $y_0 \neq y_1$.  Define $\geq \in \Val(X,Y)$ by putting $x \geq y$ iff $x=y$ or ($x=x_0$ and  ($y=y_0$ or $y=y_1$)). Let
	\[\text{IsASingleton}_{x_0}(X):=\forall_{\geq} \bot .\]
	
	Clearly, if $[x_0]_v$ is a singleton, then $$M \models_v \text{IsASingleton}_{x_0}(X).$$ To see the other direction, fix distinct $a_0, a_1 \in [x_0]_v$. Fix open $c_0, c_1 \subseteq [x_0]_v$ such that $a_0 \in c_0 \setminus c_1$, $a_1 \in c_1 \setminus c_0$,  a minimal open cover $C$ of $M$ extending $\{c_0, c_1\}$ such that $a_0, a_1 \not \in \bigcup C \setminus \{c_0,c_1\}$,  and  let $w' \in V^Z$ be a fragmentation of $v$ such that $[w']$ refines $C$. Fix $y_0, y_1 \in Y$  such that $a_0 \in [y_0]_{w'}$, $a_1 \in [y_1]_{w'}$; in particular $y_0 \neq y_1$.  Finally, let $w$ be a consolidation of $w'$ determined by $Y$ as in (1). Then $w$ witnesses that $M  \models_v   \exists_{\geq} \top$.
	%
	%
\end{proof}

For $v \in V^X$, we denote by $G(v)$ the (reflexive) graph $(X, \sqcap)$ determined by the non-empty intersection relation on $[v]$. Similarly, $S(v)$ is the finite structure in signature $L \cup \{\sqcap\}$ induced by $v$ on $X$.
\begin{remark}
	Let $X$ be a context, and let $G \subseteq X \times X$ be a reflexive graph. By Proposition \ref{pr:FormulaProp}(1), there is a single formula in the context $X$ that expresses the fact that $G(v)=G$, for every $(M,V)$, and  $v \in V^X$. We denote this formula by $\mbox{OverlapGraph}_G(X)$. 
\end{remark}


\begin{proposition}
	\label{pr:BasicSentencesSpaces}
	Let $(M,V)$ be a compact structure. The following  properties of $M$ can be expressed by a single sentence.
	
	\begin{enumerate}
		\item $|M| \geq n$, where $n \in \NN$,
		\item $M$ is disconnected,
		\item $M$ has an isolated point.
	\end{enumerate}
\end{proposition}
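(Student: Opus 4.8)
The plan is to express each of the three properties by constructing an explicit sentence in the logic of co-valuations, using the expressibility tools already developed in Proposition \ref{pr:FormulaProp}, together with the key semantic fact that admissible co-valuations form a (cap-)basis for the compact topology $\tau_V$. The general strategy in each case is to describe a configuration of co-valuations that can be realized precisely when the topological property holds; the forward direction (property $\Rightarrow$ sentence holds) is usually immediate, while the reverse direction (sentence holds $\Rightarrow$ property holds) requires producing an actual open cover witnessing the topological fact, exactly as in the proofs of Proposition \ref{pr:FormulaProp}.

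For (1), $|M| \ge n$, I would use the partition machinery. Since $[v]$ is always a minimal cover and, by Proposition \ref{pr:FormulaProp}(2), being a partition is expressible, the idea is to assert the existence of a fragmentation $w$ of the trivial co-valuation whose context has size at least $n$ and whose $[w]$ is a partition into nonempty pieces. Concretely, for a context $Y$ with $|Y|=n$ and the unique $\geq \in \Val(\bar{*},Y)$, the sentence $\exists^{\fr}_Y (\text{IsAPartition}(Y) \wedge \bigwedge_{y \in Y} \neg \text{IsASingleton}_y(Y) \text{ or } \top)$ should force $n$ disjoint nonempty open sets, hence at least $n$ points; conversely if $|M| \ge n$, compactness and $T_1$-ness let me separate $n$ points into $n$ disjoint open sets refined by an admissible cover. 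The exact bookkeeping with singletons versus nonempty cells needs care, but it is routine.

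For (2), $M$ is disconnected, the natural sentence asserts the existence of a co-valuation $v$ whose overlap graph $G(v)$ is disconnected as a graph while $[v]$ is a partition — i.e. $M$ splits into two nonempty relatively clopen pieces, each covered by cells that do not meet cells of the other piece. Using $\text{IsAPartition}$ together with $\text{OverlapGraph}_G$ for a disconnected reflexive graph $G$, I would write $\bigvee_{G} \exists^{\fr}_X (\text{IsAPartition}(X) \wedge \text{OverlapGraph}_G(X))$ where the disjunction ranges over the finitely many disconnected reflexive graphs on a fixed vertex set, with a nondegeneracy clause ensuring both components are nonempty. The forward direction is clear from a clopen partition; the reverse direction uses that a partition cover with disconnected overlap graph yields a separation of $M$ into two nonempty open (hence, being complements of open sets, clopen) sets. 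For (3), $M$ has an isolated point, the sentence asserts a co-valuation $v$ together with some $x_0 \in X$ for which $\text{IsASingleton}_{x_0}(X)$ holds, i.e. $[x_0]_v = \{a\}$ is a single point that is open; since admissible co-valuations are open covers, an open singleton is exactly an isolated point.

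The main obstacle, I expect, is the reverse direction in cases (1) and (2): from the mere satisfaction of the sentence I must manufacture a genuine open cover of $M$ realizing the required configuration, and this relies essentially on the basis property of $V$ (every open cover is refined by some $[v]$, $v \in V$) and on directedness (Axiom (I)) to pass between co-valuations, precisely the delicate steps already executed in Proposition \ref{pr:FormulaProp}. In particular, translating "the overlap graph is disconnected" into an honest topological separation requires knowing that disjoint cells of an admissible partition are genuinely separated in $\tau_V$, and that minimality of the cover prevents degenerate collapses; I would lean on Proposition \ref{pr:StarRefClosure} and the cap-basis structure to control closures and ensure the pieces are clopen.
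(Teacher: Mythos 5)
Your treatments of (2) and (3) are essentially the paper's: for disconnectedness the paper simply takes a context $X$ of size $2$ and uses $\exists_X \text{IsAPartition}(X)$ (your disjunction over disconnected overlap graphs is redundant, since a two-cell partition coming from a minimal cover automatically has disjoint nonempty cells), and for an isolated point it asserts that one cell of a two-element context is a singleton, exactly as you do --- an open singleton is an isolated point.

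Your sentence for (1), however, does not express $|M|\geq n$. Requiring a fragmentation whose cover $[w]$ is a partition into $n$ nonempty open pieces asserts that $M$ splits into $n$ pairwise disjoint nonempty open sets, i.e., that $M$ has at least $n$ clopen pieces; this is strictly stronger than $|M|\geq n$. For the arc (a connected compact structure with infinitely many points) no such partition exists already for $n=2$, so your sentence is false while $|M|\geq n$ holds. The claimed converse, that ``compactness and $T_1$-ness let me separate $n$ points into $n$ disjoint open sets refined by an admissible cover,'' fails precisely because disjoint open neighborhoods of $n$ points cannot in general be made to cover $M$. The correct sentence is much simpler and uses no partition at all: take a context $X$ with $|X|=n$ and use $\exists_X \top$. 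If this holds, the witnessing admissible $v\in V^X$ gives a minimal open cover $[v]$ with $n$ cells, and minimality (co-bijectivity) forces each cell to contain a point lying in no other cell, yielding $n$ distinct points. Conversely, if $a_1,\dots,a_n\in M$ are distinct, the sets $U_k=M\setminus\{a_j: j\neq k\}$ form, by $T_1$-ness, a minimal open cover of size $n$; it is refined by some admissible co-valuation, which is then consolidated to a context of size $n$ exactly as in the proof of Proposition \ref{pr:FormulaProp}.
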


\begin{proof}
	For (1),  let $X$ be a context of size $n$, and  consider the sentence $\exists_X \top$. For (2), let $X$ be a context of size $2$, and consider the sentence $\exists_X \text{IsAPartition}(X)$. For (3),  let $X=\{x_0,x_1\}$, where $x_0 \neq x_1$, and consider the sentence $\exists_X \text{IsAnIsolatedPoint}_{x_0}(X)$.
\end{proof}

\begin{proposition}
	Let $(M,V)$ be a compact $\omega$-structure. The covering dimension of $(M,\tau_V)$ can be expressed by a countable theory.
\end{proposition}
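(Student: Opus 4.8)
The plan is to reduce the assertion to a statement about admissible covers and their orders, and then to express that statement by a countable set of sentences, one for each context. Recall that a finite cover has \emph{order} at most $n+1$ when no point lies in $n+2$ distinct members, and that $\dim X\le n$ means every finite open cover admits a finite open refinement of order at most $n+1$. By Proposition \ref{pr:FormulaProp}(1), for a co-valuation $w$ in a context $Y$ the property ``$[w]$ has order at most $n+1$'' is expressed by the single formula
\[ \mathrm{Ord}_{n+1}(Y) := \bigwedge_{Y_0\subseteq Y,\ |Y_0|=n+2} \mathrm{HasEmptyIntersection}_{Y_0}(Y), \]
the empty conjunction being read as $\top$ when $|Y|\le n+1$. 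The target is the equivalence
\[ \dim(M,\tau_V)\le n \iff \text{every } v\in V \text{ has } w\in V \text{ with } v\ge w \text{ and } M\models_w \mathrm{Ord}_{n+1}(X_w), \]
that is, the order-$\le n+1$ admissible covers are cofinal in the refinement order.

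For $(\Leftarrow)$ I would use the cap-basis property of a compact $\omega$-structure: any finite $\tau_V$-open cover is refined by some $[v]$, $v\in V$; if $v\ge w$ and $M\models_w\mathrm{Ord}_{n+1}(X_w)$, then $[w]$ is an open refinement of order at most $n+1$, so $\dim(M,\tau_V)\le n$. For $(\Rightarrow)$, fix $v\in V^X$ with $[v]=\{c_x:x\in X\}$. Using $\dim(M,\tau_V)\le n$ in its shrinking form (every finite open cover of a normal, e.g.\ metrizable, space has an open shrinking of order $\le n+1$), choose open $V_x\subseteq c_x$ with $\{V_x\}_{x\in X}$ covering and of order $\le n+1$. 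By cofinality of a core $(v_i)$ (Corollary \ref{co:SCCompactAreO-Structures}) pick $v_i$ whose cover refines $\{V_x\}$, and set $V_x'=\bigcup\{p\in[v_i]:p\subseteq V_x\}$. Then $V_x'\subseteq V_x\subseteq c_x$, so a point-count gives $\mathrm{order}\{V_x'\}\le\mathrm{order}\{V_x\}\le n+1$, and $\{V_x'\}$ still covers. Each $V_x'$ is a union of cells of $[v_i]$, so $\{V_x'\}$ — and any minimal subcover $\{V_x':x\in X'\}$ of it — is a consolidation of $v_i$; as $V$ is closed under consolidations and $v_i\in V$, the associated co-valuation $w\in V^{X'}$ is admissible, $V_x'\subseteq c_x$ witnesses $v\ge w$ via the inclusion pattern $\ge_{X'}\in\Val(X,X')$ (with $x\ge_{X'}x$ for $x\in X'$), and $M\models_w\mathrm{Ord}_{n+1}(X')$.

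To express the right-hand side, recall that for a fixed pattern the refining quantifier $\exists_{\ge}$ is definable (the corollary to Theorem \ref{th:SequenceDetermines}), so, since the witness $w$ may be taken with context $X'\subseteq X$ and inclusion pattern $\ge_{X'}$, the statement ``$v$ has an admissible refinement of order $\le n+1$'' is the \emph{finite} disjunction
\[ \mathrm{Refinable}_n(X) := \bigvee_{X'\subseteq X}\exists_{\ge_{X'}}\mathrm{Ord}_{n+1}(X'), \]
a single formula in context $X$. Quantifying universally over $V^X$ by $\forall^\fr_X$ (which ranges over all of $V^X$, as every admissible co-valuation fragments the trivial co-valuation on $\bar *$), the countable theory
\[ T_{\le n} := \{\, \forall^\fr_X\, \mathrm{Refinable}_n(X) : X \text{ a context} \,\} \]
satisfies $(M,V)\models T_{\le n}$ iff $\dim(M,\tau_V)\le n$. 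Hence $\dim(M,\tau_V)$ is the least $n$ with $(M,V)\models T_{\le n}$ (and $\infty$ if there is none), so each value of the covering dimension is pinned down by the countable family $\{T_{\le n}\}_{n\in\NN}$.

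The main obstacle is the $(\Rightarrow)$ direction, and specifically producing a witness that is simultaneously \emph{admissible} and of \emph{bounded context}. Admissibility is handled by replacing the topological shrinking with its cell-union $\{V_x'\}$ and invoking closure of $V$ under consolidations; the bounded context ($X'\subseteq X$, hence at most $|X|$ cells) is what makes $\mathrm{Refinable}_n(X)$ a finite disjunction rather than an infinite one over all contexts and patterns, and it is exactly here that order reduction forces the shrinking form of covering dimension. For normal (in particular metrizable) $(M,\tau_V)$ the shrinking theorem supplies this directly; in the general compact $T_1$ case one either adopts the shrinking formulation as the definition of covering dimension, or one must prove a uniform bound on the number of cells needed to refine a $k$-cell cover to order $\le n+1$, in the spirit of Remark \ref{re:BoundOnZII} and Proposition \ref{pr:BoundOnZI}. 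Establishing that uniform bound is the crux of the argument.
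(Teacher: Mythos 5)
Your overall architecture matches the paper's: one sentence per context size, a ``ply at most $n+1$'' formula built by conjoining $\mathrm{HasEmptyIntersection}$ over large subsets of the context, a finite disjunction over witness contexts of bounded size, and the whole assembled into a countable theory $T_{\le n}$. But the specific finite disjunction you write down has two defects, and the second is exactly the point you yourself flag as ``the crux'' and leave open. First, for $X'\subsetneq X$ the relation $\{(x,x):x\in X'\}$ is not surjective (an $x\in X\setminus X'$ has no successor), hence by Remark \ref{re:CoInjSurj} not co-injective and not an element of $\Val(X,X')$, so $\exists_{\ge_{X'}}$ is not a well-formed quantifier of the logic; nor can it in general be repaired to a co-bijective pattern, since a cell $[x]_v$ whose shrinking was discarded as redundant need not contain any of the surviving shrunk cells. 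Second, even granting a legal pattern, producing a witness whose cells sit inside the \emph{corresponding} cells of $[v]$ requires the shrinking theorem, i.e.\ normality, whereas the ambient setting is compact $T_1$.

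The paper closes both gaps with one move, which supplies the uniform bound you ask for: index the witness by a subset $A$ of $2^X$ rather than of $X$. Given any finite open refinement $\{d_j\}$ of $[v]$ of order at most $n+1$, group the $d_j$ by their trace $a_j=\{x\in X: d_j\subseteq [x]_v\}$ and replace each trace class by its union. A point lying in $k$ distinct trace-class unions lies in $k$ distinct $d_j$'s, so the order bound is preserved; each union is contained in $\bigcap_{x\in a}[x]_v$, so the membership pattern $x\ge a\iff x\in a$ witnesses the refinement; and the index set has size at most $2^{|X|}$, which yields the finite disjunction $\bigvee_{A\subseteq 2^X}\exists_{\ge_A}\mathrm{PlyAtMostN}(A)$ appearing in the paper's sentence $\mathrm{DimAtMostN}_n$. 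This is precisely the mechanism of Proposition \ref{pr:BoundOnZI} that you cite but do not deploy; with that substitution in place of the shrinking argument, the rest of your proposal (the cap-basis direction, the cell-union trick for admissibility, and the passage to a countable theory indexed by context size) goes through.
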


%
%
%
%
%

%
%
%


\begin{proof}
	Let $X$ be a context of size $n+1$,  and define
	\[\text{PlyAtMostN}(X)= \bigwedge_{\{X_0 \subseteq X: |X_0|=n\}} \text{HasEmptyIntersection}_{X_0}(X),\]
	\[\text{DimAtMostN}_n=\forall_X \bigvee_{A \subseteq 2^X} \exists_{\geq A} \text{PlyAtMostN}(A),\]
	\[\text{DimAtMostN}=\{\text{DimAtMostN}_n: n \in \NN \}.\]
\end{proof}
%

Let $(M,V)$ be a compact $\omega$-structure, let $X$, $Y$ be contexts, let $v \in V^X$, and $w=C_\geq(v)$, for some $\geq \in \Val(X,Y)$. 
Note that it entirely depends on $G=G(w)$ and $\geq$ whether or not $[w] \lhd [v]$ . For a graph $G \subseteq X \times X$, denote the family of all corresponding $\geq \in \Val(X,Y)$ by $\mbox{Star}_G(X,Y)$.

%

\begin{proposition}
	Metrizable $\omega$-structures are exactly compact $\omega$-structures that omit all the types
	\[ A_n(X_n)=\{ \neg(\mbox{OverlapGraph}_G(X_n) \wedge  \exists_{\geq} \top): G \subseteq X_n\times X_n, \, \geq \in \mbox{Star}_G(X_n,Y) \},\]
	where $n \in \NN$, $|X_n|=n$.
\end{proposition}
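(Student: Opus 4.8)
The plan is to characterize metrizability via the topological notion of regularity and star-refinement, and to show that omitting the types $A_n$ is exactly the statement that every admissible cover admits a star-refining consolidation. Recall from Corollary \ref{co:Regular} that a compact $\omega$-structure is metrizable iff its spectrum is regular, and from Proposition \ref{pr:Regular} (together with Remark \ref{re:MetrizablePosetIsRegular}) that regularity of a second-countable compact $T_1$ space is equivalent to the existence of a family $\mathcal{C}$ of open covers, cofinal in the refinement order, such that every $C \in \mathcal{C}$ is $\rhd$-refined by some $D \in \mathcal{C}$. Since in a compact $\omega$-structure the admissible covers $[v]$, $v \in V$, form exactly such a cofinal family (by the definition of compact structure and the directedness Axiom (I)), metrizability of $(M,V)$ is equivalent to: for every $v \in V$ there is $w \in V$ with $[w] \lhd [v]$.

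The next step is to decode the types $A_n$. A type $A_n(X_n)$ is \emph{omitted} precisely when there is no admissible $v \in V^{X_n}$ satisfying all its formulas simultaneously, i.e., no $v$ that, for its overlap graph $G = G(v)$, also satisfies $\neg \exists_\geq \top$ for every $\geq \in \mbox{Star}_G(X_n, Y)$. Unwinding the double negation, omitting $A_n$ says: for every $v \in V^{X_n}$, if $G(v) = G$, then there exists $\geq \in \mbox{Star}_G(X_n, Y)$ with $M \models_v \exists_\geq \top$. By the semantics of $\exists_\geq$, the witness is a refinement $w \in V^Y$ of $v$ along the pattern $\geq$, and the key point is that, by the definition of $\mbox{Star}_G(X_n,Y)$, the condition $\geq \in \mbox{Star}_G(X_n,Y)$ is exactly the combinatorial condition guaranteeing $[w] \lhd [v]$. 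Thus I would show that omitting every $A_n$, $n \in \NN$, is equivalent to the statement that every admissible $v$ admits an admissible star-refinement $w$ with $[w] \lhd [v]$, which is the metrizability criterion from the previous paragraph. Since every context is, up to the size bound, one of the $X_n$, and since by Axiom (I) and Proposition \ref{pr:BoundOnZI} we may restrict to witnesses $w$ of bounded context size, quantifying over all $n$ suffices.

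The main obstacle I expect is verifying carefully that the combinatorial relation $\lhd$ between admissible covers is faithfully captured by the formulas in $A_n$. Concretely, I must confirm the observation recorded just before the proposition -- that whether $[w] \lhd [v]$ holds depends only on the overlap graph $G(w)$ and the pattern $\geq$, and not on the particular covers realizing them. This requires translating $b \lhd_B a$ (the nonempty-intersection condition $b' \cap b \neq \emptyset \Rightarrow b \subseteq b'$) into purely order-theoretic data about the poset $\bigsqcup [v_i]$, using Proposition \ref{le:GoodBehavior} and Corollary \ref{co:IsomorphicPosets} to transfer between the abstract $\omega$-chain and the concrete covers of $\Sp \PPb$; the star $Cp$ and the relation $q \lhd_C p$ defined for $\omega$-posets are precisely the tool for this. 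One delicate part is matching the cover-level relation $\lhd$ (Proposition \ref{pr:StarRefClosure}) with the poset-level relation of the same name (Corollary \ref{co:Regular}), ensuring $[w] \lhd [v]$ as covers corresponds to $w \lhd v$ as elements of the $\omega$-chain, so that Proposition \ref{pr:StarRefClosure} yields $\cl{[y]_w} \subseteq [x]_v$ and hence genuine regularity.

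Finally, I would assemble the equivalence: if $(M,V)$ omits all the $A_n$, then every admissible cover has an admissible star-refinement, so by the regularity criterion $\Sp \PPb$ is regular, hence by Corollary \ref{co:Regular} metrizable; conversely, if $(M,V)$ is metrizable, then by Remark \ref{re:MetrizablePosetIsRegular} every admissible cover is star-refined by another admissible cover, which produces, for each $v \in V^{X_n}$ with $G(v)=G$, a suitable $\geq \in \mbox{Star}_G(X_n,Y)$ and an admissible witness $w$, so that $M \models_v \exists_\geq \top$ and thus $A_n$ is omitted. This closes the characterization.
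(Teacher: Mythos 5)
Your proposal follows the paper's proof essentially verbatim: decode omission of $A_n$ as the statement that every admissible $v$ admits an admissible $w$ with $[w]\lhd[v]$ (using the observation preceding the proposition that this is determined by the overlap graph and the pattern), conclude regularity of $\tau_V$ via Proposition \ref{pr:Regular} and hence metrizability, and obtain the converse from Remark \ref{re:MetrizablePosetIsRegular}. The only difference is that you spell out the unwinding of the type and the verification of the $\lhd$-translation, which the paper leaves implicit.
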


\begin{proof}
	Suppose that a compact $\omega$-structure $(M,V)$ omits all the types $A_n(X_n)$. 
	As it was observed above, if for some $v \in V^X$, $G \subseteq X \times X$,  and $\geq \in \mbox{Star}_G(X,Y)$, $w \in V^Y$ witnesses that $$M \models_v \mbox{OverlapGraph}_{G}(X) \wedge  \exists_{\geq} \top,$$ then $[w] \lhd [v]$. Thus, by Proposition \ref{pr:Regular}, the topology $\tau_V$ is regular, i.e., metrizable.
	 For the other implication, use Remark \ref{re:MetrizablePosetIsRegular}.
	
\end{proof}

\paragraph{\textbf{The pseudo-arc}.} 
For a context $X$, let $\mbox{Lin}(X)$ be the set of all linear reflexive graphs on $X$. We say that a co-valuation $v \in \Val(X,M)$ is a \emph{chain-cover} if $[v]$ is a chain-cover of $M$, i.e., $G(v)$ is a linear (or chain-) graph. A continuum $X$, i.e., a connected, metrizable  compact space $X$, is \emph{chainable} if every  open cover of $X$ can be refined by an open chain-cover. Observe that if a compact $\omega$-structure $(M,V)$ is connected, $w \in V^Y$, and $C$ is a chain-cover of $M$ refined by $[w]$ with pattern $\geq \in \Val(C,Y)$, then $C_{\geq}(w)$ is also a chain-cover refining $C$.  Therefore we have:

\begin{proposition}
	Chainable continua are exactly the compact $\omega$-structures that are connected, and omit all the types $A_n(X_n)$, and $B_n(X_n)$,  $n \in \NN$, defined by	
	\[ B_n(X_n)=\{ \neg \exists_{\geq} \text{OverlapGraph}_G(Y): G \in \mbox{Lin}(Y), \, \geq \in \Val(X_n,Y) \}.\]
\end{proposition}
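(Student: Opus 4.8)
The plan is to prove the two inclusions separately, leaning on the previously established characterization of metrizability so that the types $A_n$ require no new argument; the genuinely new content is the interaction between connectedness and the types $B_n$. First I would record what omitting $B_n(X_n)$ says concretely: since each member of $B_n$ has the form $\neg \exists_{\geq} \text{OverlapGraph}_G(Y)$ with $G \in \mbox{Lin}(Y)$ and $\geq \in \Val(X_n,Y)$, the type $B_n$ is omitted exactly when every admissible $v \in V^{X_n}$ can be refined by an admissible chain-cover, i.e. there are $Y$, a linear $G$, a pattern $\geq \in \Val(X_n,Y)$, and $w \in V^Y$ with $v \geq w$ and $G(w)=G$.

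For the forward direction I would start from a chainable continuum $(M,V)$. By definition it is connected and metrizable, so by the preceding proposition it omits every $A_n$. To omit $B_n$, fix $v \in V^{X_n}$; chainability gives an open chain-cover $C$ refining $[v]$, and the compact $\omega$-structure property gives $w \in V^Y$ with $[w]$ refining $C$, say with pattern $\geq \in \Val(C,Y)$. Here I invoke the observation stated just before the proposition -- this is the only place connectedness is used -- to conclude that the consolidation $C_\geq(w)$ is again a chain-cover; it is admissible because $V$ is closed under consolidations. Since $C_\geq(w)$ refines $C$ and hence $[v]$, the refinement pattern $\geq' = \geq^v_{C_\geq(w)}$ witnesses $M \models_v \exists_{\geq'} \text{OverlapGraph}_{G(C_\geq(w))}(Y')$, so $v$ falsifies a member of $B_n$, and $B_n$ is omitted.

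For the converse I would take a connected compact $\omega$-structure omitting all $A_n$ and all $B_n$. Omitting the $A_n$ yields metrizability, and with connectedness and compactness this makes $(M,\tau_V)$ a continuum. To verify chainability, fix an arbitrary open cover $U$, refine it by some $[v]$ with $v \in V^{X_n}$ (compact $\omega$-structure property), and use that $v$ does not realize $B_n$ to obtain an admissible chain-cover $[w]$ with $v \geq w$; then $[w]$ is an open chain-cover refining $[v]$, hence refining $U$. As $U$ is arbitrary, $M$ is chainable.

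The step I expect to be the main obstacle is producing an \emph{admissible} chain refinement in the forward direction: the chain-cover $C$ supplied by chainability need not be admissible, while a bare admissible refinement $w$ of $C$ need not be a chain. Both issues are dissolved at once by consolidating $w$ along the pattern of $C$, but this works only because $M$ is connected -- without connectedness the overlap graph of $C_\geq(w)$ could be a proper, disconnected subgraph of a chain. I would therefore lean on the cited observation for that point rather than reprove it. A minor routine check I should not skip is that the refinement pattern between two minimal covers is automatically co-bijective, so that $\geq'$ really lies in $\Val(X_n,Y')$; this follows from minimality using the private points of $[x]_{\pi_v}$.
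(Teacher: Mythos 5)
Your proof is correct and follows essentially the same route as the paper, which states the proposition as an immediate consequence of the preceding observation that in a connected compact $\omega$-structure the consolidation $C_{\geq}(w)$ of an admissible refinement $w$ of a chain-cover $C$ is again a chain-cover; you have simply (and accurately) filled in the details the paper leaves implicit, including the correct unpacking of what omitting $B_n$ means and the co-bijectivity of the maximal refinement pattern via private points.
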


Finally, it is easy to define types $C_n(X_n)$, where $X_n$ is of size $n$,  such that $v \in V^{X_n}$ does not realize it iff $[v]$ is not a chain-cover or there exists $w \in V$ such that $[w]$ is a chain-cover, $[w] \lhd [v]$, and $[w]$ is crooked in $[v]$ (see \cite{BaBiVi2} for the definition). As the pseudo-arc is the unique chainable crooked continuum, we have:

\begin{proposition}
	\label{pr:Pseudoarc}
	The pseudo-arc is the unique up to homeomorphism  compact $\omega$-model of its theory that omits all the types $A_n$, $B_n$, and $C_n$, $n \in \NN$.
\end{proposition}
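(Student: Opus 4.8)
The plan is to reduce the statement to the topological characterization of the pseudo-arc as the unique chainable crooked continuum, by showing that the stated type-omission conditions correspond exactly to connectedness, metrizability, chainability, and crookedness. I would proceed by assembling the three preceding results. First, by the previous proposition, a compact $\omega$-structure that is connected and omits all $A_n(X_n)$ and $B_n(X_n)$ is exactly a chainable continuum; this already handles connectedness, metrizability (via omitting $A_n$, hence regularity through Proposition \ref{pr:Regular} and Remark \ref{re:MetrizablePosetIsRegular}), and chainability (via omitting $B_n$). So the only remaining work is to verify that omitting the types $C_n(X_n)$ corresponds precisely to crookedness of the resulting continuum.

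For the crookedness part, I would argue as follows. By the construction of $C_n$ described just before the statement, a co-valuation $v \in V^{X_n}$ realizes $C_n$ exactly when $[v]$ is a chain-cover admitting a refinement $[w]$ with $[w] \lhd [v]$ that is crooked in $[v]$. Thus omitting all $C_n$ means: no chain-cover $[v]$ admits such a crooked star-refinement. The key observation I would use is that in a chainable continuum, the collection of chain-covers arising from admissible co-valuations is cofinal among open chain-covers (by the remark preceding the previous proposition, $C_{\geq}(w)$ remains a chain-cover refining a given one), so the crookedness condition formulated via $C_n$ faithfully captures the purely topological crookedness of $(M,\tau_V)$. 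I would then invoke that a chainable continuum is crooked if and only if every open chain-cover admits a crooked open chain-cover refinement—precisely the condition negated by omitting all $C_n$—to conclude that the structures in question are exactly the chainable crooked continua.

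The final step is to appeal to the classical Bing characterization, cited as the uniqueness of the chainable crooked continuum, which identifies the pseudo-arc up to homeomorphism. Combining this with Remark \ref{re:SpectraAreEnough}, which guarantees that the compact $\omega$-structure is recovered up to isomorphism from its spectrum, yields that any two compact $\omega$-models of the pseudo-arc's theory omitting all $A_n$, $B_n$, $C_n$ are topologically isomorphic, hence homeomorphic to the pseudo-arc.

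I expect the main obstacle to be the careful verification that the type-level crookedness encoded by $C_n$ matches the topological crookedness of the ambient continuum—specifically, ensuring that restricting attention to chain-covers coming from admissible co-valuations (a cap-basis, not the full topology) loses no crookedness information. This requires the cofinality of admissible chain-covers among open chain-covers and a confirmation that crookedness, being a property witnessed by refinements, is preserved under passing to the basis; the definitional details deferred to \cite{BaBiVi2} would need to be reconciled with the logic's $\lhd$ relation.
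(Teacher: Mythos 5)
Your overall plan matches the paper's: the proposition is stated there as an immediate corollary of the preceding discussion (connectedness is a sentence of the theory, omitting the $A_n$ gives metrizability via Proposition \ref{pr:Regular}, omitting the $B_n$ gives chainability, omitting the $C_n$ gives crookedness), followed by an appeal to Bing's characterization of the pseudo-arc as the unique chainable crooked continuum; the paper supplies no further argument. One correction: you have inverted the reading of $C_n$. The paper defines $C_n$ so that $v$ \emph{fails} to realize it iff $[v]$ is not a chain-cover \emph{or} admits an admissible crooked star-refinement; hence realizing $C_n$ means being a chain-cover with \emph{no} such refinement, and omitting all $C_n$ asserts that every admissible chain-cover does admit one --- which, by the cofinality of admissible chain-covers you correctly identify, is exactly crookedness. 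Your intermediate claim that omitting all $C_n$ means ``no chain-cover admits a crooked star-refinement'' is therefore backwards, even though your final conclusion states the right equivalence; with that sign fixed, the argument is the paper's.
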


\paragraph{\textbf{The linearly ordered arc}.} For a binary relation $\preceq$, let us write $x\prec y$ for $(x \preceq y) \wedge \neg (y \preceq x)$. The following is easy to verify.

\begin{proposition}
	Linear orderings $(M,\preceq,V)$ on metrizable structures can be axiomatized by the following sentences: 
	
	\[ \forall_X [(x_0 \preceq x_1)\vee (x_1 \preceq x_0)],   \]
	where $X=\{x_0, x_1, x_2\}$,
	\[ \forall_X \exists_{\geq Y} [(y_1 \prec y_2)\vee (y_2 \prec y_1)],   \]
	where $X=\{x_0, x_1, x_2\}$, $Y=\{ y_0,y'_0, y_1,y'_1, y_1 \}$, $x_0 \geq y_0, y'_0$, $x_1 \geq y_1, y'_1$, $x_2 \geq y_2$, and
	\[ \forall_X [(x_0 \prec x_2)\vee \neg (x_0 \prec x_1) \vee \neg (x_1 \prec x_2) ],   \]
	where $X=\{x_0, x_1, x_2,x_3\}$.
	
\end{proposition}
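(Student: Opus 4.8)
The plan is to translate the three sentences into assertions about the closed relation $\preceq^M$ on points of $M$, using two semantic observations together with the core characterization of closed relations. First I would record that, for $v \in V^X$ with nonempty pieces and $x,y \in X$,
\[ M \models_v (x \preceq y) \iff \exists a \in [x]_v\, \exists b \in [y]_v\; a \preceq^M b, \]
directly from the clause for atomic formulas, and consequently, unwinding $x \prec y = (x \preceq y) \wedge \neg(y \preceq x)$,
\[ M \models_v (x \prec y) \iff \forall a \in [x]_v\, \forall b \in [y]_v\; a \prec^M b; \]
that is, $\models_v(x\prec y)$ says that the whole piece $[x]_v$ lies pointwise strictly $\preceq^M$-below $[y]_v$. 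I would also invoke the Remark that, in a compact $\omega$-structure with core $(v_i)$, one has $a \preceq^M b$ iff $M \models_{v_i}(x \preceq y)$ for every level $i$ and all $x,y$ with $a \in [x]_{v_i}$ and $b \in [y]_{v_i}$. These three facts are the bridge between the syntax and the order on points.

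For the implication from a linear order to the axioms, the first sentence is exactly totality: given any $v$ on a $3$-context, choose $a \in [x_0]_v$ and $b \in [x_1]_v$ and use $a \preceq^M b$ or $b \preceq^M a$, reading off the disjunction via the first displayed equivalence (the dummy variable $x_2$ merely leaves room for a third piece). The third sentence is transitivity of $\prec^M$: if $M \models_v (x_0 \prec x_1)$ and $M \models_v(x_1 \prec x_2)$ then, by the pointwise description and nonemptiness of $[x_1]_v$, any $a \in [x_0]_v$, $c \in [x_2]_v$ and $b \in [x_1]_v$ satisfy $a \prec^M b \prec^M c$, hence $a \prec^M c$, which is $M\models_v(x_0\prec x_2)$. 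The only sentence requiring topology is the middle one: for a given $3$-piece cover $\{A_0,A_1,A_2\}$ I would first produce points $p \in A_1$, $q \in A_2$ with $p \prec^M q$ (or $q \prec^M p$); since $\preceq^M$ is closed and $M$ is metrizable, hence regular, I can shrink to pointwise-$\prec^M$ ordered open sets $U \ni p$, $U' \ni q$ inside $A_1,A_2$, and then extend $\{U,U'\}$ to a minimal open cover realizing the prescribed pattern (splitting $A_0$ and $A_1$ into two pieces each and keeping $A_2$), which by the second equivalence witnesses the inner formula.

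For the converse I would argue contrapositively and reduce everything to a single level of the core. To get totality of $\preceq^M$ on points, suppose $\neg(a\preceq^M b)$ and $\neg(b\preceq^M a)$; the core characterization yields a level $v_k$ and pieces $[x]\ni a$, $[y]\ni b$ with $M\not\models_{v_k}(x\preceq y)$ and $M\not\models_{v_k}(y\preceq x)$, and consolidating $v_k$ to the $3$-piece cover $\{[x],[y],\text{rest}\}$ contradicts the first sentence. Reflexivity is then the case $a=b$ of totality, and transitivity of $\preceq^M$ follows by a similar contrapositive argument against the third sentence, setting up a $4$-piece consolidation around a would-be violating triple. The remaining and most delicate point is antisymmetry, which is what the middle sentence is designed to force: if $a\neq b$ but $a\preceq^M b\preceq^M a$, I must exhibit a $3$-piece cover for which \emph{no} admissible pattern-refinement can make one of its $[x_1]$-subpieces pointwise strictly comparable to its $[x_2]$-piece.

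The hard part is exactly this last step, for two reasons. First, the middle sentence quantifies over all pattern-refinements $w \in V^Y$, so blocking it means ruling out every admissible way of splitting the chosen pieces; I expect to pick $v$ as a consolidation of a fine core level with $[x_1]_v,[x_2]_v$ trapped inside the $\preceq^M$-equivalence region of $a,b$, and then use closedness of $\preceq^M$ to argue that subpieces of such pieces remain mutually non-strictly-comparable, so that $\models_w(y_1\prec y_2)$ and $\models_w(y_2\prec y_1)$ both fail. Second, there is genuine bookkeeping hidden in the phrase ``consolidate to a $3$-piece cover'': one must check minimality of the consolidated cover (guaranteed because each piece of a core level has a private point) and that it is admissible in the sense of Axiom (I), and one must pass to a level fine enough that the relevant pieces neither cover $M$ by themselves nor are forced to be singletons, so that the splitting demanded by the pattern is available. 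These verifications are routine, but they are precisely where the metrizability hypothesis and the structure axioms are actually used.
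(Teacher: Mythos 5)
The paper offers no argument for this proposition (it is prefaced only by ``The following is easy to verify''), so there is no proof of record to compare against; I can only assess your proposal on its own terms. Your semantic translations are the right starting point, with one caveat: $M \models_v (x \prec y)$ unwinds to ``some pair in $[x]_v \times [y]_v$ is $\preceq^M$-related and no pair is related the other way round'', which coincides with your pointwise-strict description only once totality of $\preceq^M$ is available. Since in the converse direction you derive totality from the first sentence before using $\prec$, this is a repairable imprecision rather than an error.

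The genuine gap is in the forward direction for the middle sentence, precisely where you hedge. Because $[w]$ must cover $M$ while the pattern confines $[y_0]_w,[y'_0]_w$ to $[x_0]_v$ and $[y_1]_w,[y'_1]_w$ to $[x_1]_v$, the piece $[y_2]_w$ is forced to contain the set $[x_2]_v \setminus ([x_0]_v \cup [x_1]_v)$; you cannot shrink it to a small $U'$, and ``keeping $A_2$'' meets the same obstruction. That residual set need not lie on one side of $[x_1]_v$: on the ordered arc, consolidating a fine admissible interval cover by a co-bijective pattern that groups the two end intervals into the single piece $[x_2]_v$ and two middle intervals into $[x_1]_v$ yields an admissible $v$ (proto-structures are closed under all consolidations) for which $[x_2]_v \setminus ([x_0]_v \cup [x_1]_v)$ contains points both strictly below and strictly above every point of $[x_1]_v$, so no nonempty open $[y_1]_w \subseteq [x_1]_v$ can be pointwise strictly comparable with $[y_2]_w$. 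So your argument for this direction cannot be completed as written, and the difficulty appears to lie in the statement (or in the intended restriction on admissible co-valuations) rather than in your handling of it; this should be flagged rather than patched. Separately, the antisymmetry step of your converse remains a plan rather than a proof: the sub-pieces $[y_1]_w$ quantified over in the middle sentence need not contain the offending point $a$, so closedness of $\preceq^M$ alone does not rule out a refinement making $[y_1]_w$ and $[y_2]_w$ strictly comparable, and an actual argument is still required there.
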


It is well known that the linearly ordered arc (i.e., the unit interval) is the unique linearly ordered continuum. Thus, we get:

\begin{proposition}
	\label{pr:OrderedArc}
	The linearly ordered arc is the unique up to topological isomorphism compact $\omega$-model of its theory that omits all the types $A_n$, $n \in \NN$.
\end{proposition}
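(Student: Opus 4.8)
The plan is to assemble the three structural facts that the preceding results already make available -- metrizability, connectedness, and the presence of a compatible closed linear order -- and then invoke the classical characterization of the unit interval as the unique linearly ordered continuum, which is quoted just before the statement. So let $(M,V)$ be an arbitrary compact $\omega$-structure that models the theory $T$ of the linearly ordered arc and omits every type $A_n$, $n \in \NN$. First I would use the omission of the $A_n$: by the proposition characterizing metrizable $\omega$-structures as precisely those omitting all the $A_n$, the topology $\tau_V$ is metrizable, hence $(M,\tau_V)$ is a second countable compact metrizable space.

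Next I would read off the remaining information from $T$. The sentence expressing disconnectedness (Proposition \ref{pr:BasicSentencesSpaces}(2)) fails in the arc, so its negation belongs to $T$; therefore $M$ is connected, and $(M,\tau_V)$ is a metrizable continuum. Moreover, the three axioms of the previous proposition, which axiomatize linear orderings on metrizable structures, all lie in $T$, since the arc is itself such a structure. Consequently $(M,\preceq^M,V)$ is a linearly ordered metrizable structure: $\preceq^M$ is a closed linear order on $M$ whose order topology coincides with $\tau_V$.

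At this point $(M,\tau_V,\preceq^M)$ is a compact, connected, metrizable linearly ordered space, i.e.\ a linearly ordered continuum. By the classical theorem quoted before the statement, there is an order-preserving homeomorphism $f\colon M \to [0,1]$ (after possibly reversing $\preceq^M$, which changes nothing up to topological isomorphism via $x \mapsto 1-x$). Such an $f$ is simultaneously a homeomorphism of $(M,\tau_V)$ onto $[0,1]$ and an isomorphism of the order relations, hence a topological isomorphism of $(M,V)$ onto the linearly ordered arc. Finally, the arc itself is a compact $\omega$-model of $T$ omitting all the $A_n$ -- it is metrizable, so it omits them by the same metrizability proposition -- so the class in question is nonempty and every member is topologically isomorphic to it.

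The only genuinely delicate transition is the one recorded in the second paragraph: that satisfaction of the three order axioms on a metrizable structure really does force $\preceq^M$ to be a bona fide linear order (in particular point-level antisymmetry) whose order topology agrees with $\tau_V$. I would lean entirely on the preceding proposition for this, as it is exactly its content; once that is granted, the metrizability and connectedness steps and the appeal to the unique-linear-continuum theorem are immediate.
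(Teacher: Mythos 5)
Your argument is exactly the one the paper intends: the paper states this proposition with no explicit proof, deriving it (``Thus, we get'') from the omission of the $A_n$ giving metrizability, the theory forcing connectedness and the linear-order axioms, and the classical uniqueness of the linearly ordered continuum. Your write-up is a correct and faithful expansion of that same route, including the appropriate deferral of the only delicate point (that the co-valuation-level order axioms yield a genuine closed linear order compatible with $\tau_V$) to the preceding proposition.
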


Later we will see that more can be proved about certain metrizable $\omega$-structures topologically isomorphic with the arc, the ordered arc, or with the pseudo-arc.

\section{Elements of model theory}
	
	\subsection{\L{}o\'s's theorem} Let $D $ be an ultrafiler on a set $I$, and let $(M_i,V_i)$, $i \in I$, be proto-structures in signature $L$. Let $\prod_D M_i$ be the ultraproduct of first-order structures $M_i$. For a context $X$, the ultraproduct $\prod_D v_i \in \Val(X, \prod_D M_i)$ of co-valuations $v_i \in V_i^X$ on $\prod_D M_i$ in $X$ is defined by
	\[
	(x, [(a_i)]) \in \prod_D v_i \iff (x, a_i) \in v_i \text{ for $D$-many $i$}.
	\]
	 Observe that this definition does not depend on the choice of the representative $(a_i) \in \prod M_i$. We denote the family of all ultraproducts of admissible co-valuations by $\prod_D V_i$. It is immediate to verify that
	$$ [(a_i)] \in [x]_{\pi_{\prod_D v_i}} \iff  a_i \in [x]_{\pi_{v_i}} \text{ for $D$-many  $i$},$$ 
	$$ \prod_D v_i \geq \prod_D w_i \iff v_i \geq w_i \text{ for $D$-many  $i$}, $$
	$$ C_\geq(\prod_D v_i) = \prod_D C_\geq(v_i).$$

	In particular, $(\prod_D M_i, \prod_D V_i)$ is a proto-structure. By Proposition \ref{pr:BoundOnZI}, if for proto-structures $M_i$, admissible co-valuations satisfy Axiom (I), then admissible co-valuations satisfy Axiom (I) for $(\prod_D M_i, \prod_D V_i)$. By the above observations, if $w_i$, $\bar{a}_i$ are witnesses for $R$, $v_i$, $\bar{x}$, then $\prod_D w_i$, $\prod_D \bar{a}_i$  are witnesses for $R$, $\prod_D v_i$, $\bar{x}$.  Hence,  $(\prod_D M_i, \prod_D V_i)$ is a structure, if all $(M_i,V_i)$ are structures. Thus, we have:
	
\begin{proposition}
The class of proto-structures, and the class of structures are closed under ultraproducts.
\end{proposition}
	
	\begin{theorem} [\L{}o\'s's theorem]
		Let $D$ be an ultrafilter on a set $I$,
		let $(M_i, V_i)$, $i \in I$, be a family of proto-structures in signature $L$, and let
		$v_i \in V_i$ be co-valuations in  context $X$, $i \in I$.
		For every formula~$\phi(X)$,
		\[
		\prod_D M_i \models_{\prod_D v_i} \phi
		\iff M_i \models_{v_i} \phi \text{ for $D$-many  $i$ }.
		\]
	\end{theorem}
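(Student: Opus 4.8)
The plan is to argue by induction on the complexity of $\phi(X)$, following the classical proof, with the quantifier clause carrying all the new content. Throughout I freely use the three displayed identities preceding the theorem — the descriptions of $\pi_{\prod_D v_i}$, of the refinement relation, and the commutation $C_\geq(\prod_D v_i)=\prod_D C_\geq(v_i)$ — together with the already-established fact that $(\prod_D M_i,\prod_D V_i)$ is a structure.

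For the base cases, $\bot$ is immediate since neither side holds. For an atomic formula $R(\bar{x})$, I unfold $\prod_D M_i\models_{\prod_D v_i}R(\bar{x})$ as the existence of $\bar{a}\in[\bar{x}]_{\prod_D v_i}$ with $R^{\prod_D M_i}(\bar{a})$. Writing $a^k=(a^k_i)_D$, the membership $a^k\in[x_k]_{\prod_D v_i}$ means, by definition, that $a^k_i\in[x_k]_{v_i}$ for $D$-many $i$, while $R^{\prod_D M_i}(\bar{a})$ means, by the classical \L o\'s's theorem for the first-order reduct, that $R^{M_i}(\bar{a}_i)$ for $D$-many $i$. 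Since the context $X$ is finite, only finitely many such $D$-large conditions occur, and their intersection is again $D$-large; this gives the forward implication. For the converse I pick, for each of the $D$-many $i$ with $M_i\models_{v_i}R(\bar{x})$, a witnessing tuple inside the relevant classes, complete the remaining coordinates arbitrarily, and read off a witness for $\prod_D M_i$.

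The connectives are routine. For $\neg\phi$ the chain ``$\prod_D M_i\models_{\prod_D v_i}\neg\phi$ iff not $D$-many $i$ satisfy $\phi$ iff $D$-many $i$ satisfy $\neg\phi$'' uses that $D$ is an \emph{ultrafilter}, so the complement of a non-$D$-large set is $D$-large. For $\phi\wedge\psi$ I use only that $D$ is a filter: the set where both conjuncts hold is the intersection of the two individual $D$-large sets.

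The clause $\exists^{\fr}_\geq\phi$, with $\geq\in\Val(X,Y)$ and $\phi$ in context $Y$, is the heart of the argument and the step I expect to be the main obstacle. A witness on the product side is some $w=\prod_D w_i\in\prod_D V_i$ with $\prod_D v_i=C_\geq(w)$ and $\prod_D M_i\models_w\phi$; using $C_\geq(\prod_D w_i)=\prod_D C_\geq(w_i)$, the constraint becomes the equality of ultraproducts of co-valuations $\prod_D C_\geq(w_i)=\prod_D v_i$, while the satisfaction of $\phi$ is handled by the inductive hypothesis applied to the $w_i$. The key auxiliary fact, which I will isolate as a lemma, is that for co-valuations $u_i,u_i'\in\Val(X,M_i)$ in a fixed \emph{finite} context $X$ one has $\prod_D u_i=\prod_D u_i'$ if and only if $u_i=u_i'$ for $D$-many $i$. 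For the nontrivial direction I assume the ultraproducts agree yet the coordinates differ for $D$-many $i$; using the ultrafilter I first fix the direction of the inclusion-failure, then — by finiteness of $X$ — a single coordinate $x\in X$ along which it persists, and exhibit a pair $(x,a)$ lying in one ultraproduct but not the other, a contradiction. Granting the lemma, the equality of ultraproducts descends to $C_\geq(w_i)=v_i$ for $D$-many $i$, which with $M_i\models_{w_i}\phi$ yields $M_i\models_{v_i}\exists^{\fr}_\geq\phi$ for $D$-many $i$. Conversely, from witnesses $w_i\in V_i^Y$ for $D$-many $i$ I assemble $w=\prod_D w_i$ (the choice off a $D$-large set being irrelevant); then $C_\geq(w)=\prod_D v_i$ because $C_\geq(w_i)=v_i$ holds $D$-almost everywhere, and $\prod_D M_i\models_w\phi$ by the inductive hypothesis, so $w$ witnesses the quantifier on the product. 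The finiteness of $X$ and the ultrafilter property are precisely what drive the descent of the equality constraint, and this is where the argument would fail without them.
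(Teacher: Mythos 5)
Your proof is correct and follows essentially the same route as the paper's: induction on the complexity of formulas, with the quantifier clause handled via the commutation $C_\geq(\prod_D w_i)=\prod_D C_\geq(w_i)$ and the descent of the consolidation constraint to a $D$-large set of coordinates. The descent lemma you isolate (equality of ultraproducts of co-valuations in a fixed finite context holds iff coordinatewise equality holds for $D$-many $i$) is precisely the content the paper declares ``immediate to verify'' in the paragraph preceding the theorem, and your argument for it, using the ultrafilter property and the finiteness of $X$, is sound.
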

	\begin{proof}
		The proof is by induction on the complexity of formulas.  First, observe that, for every $R \in L$, $\prod_D M \models_{\prod_D v_i} R(\bar{x})$ iff there exist $D$-many $i$, and $\bar{a}_i \in [\bar{x}]_{v_i}$ such $R^{M_i}(\bar{a}_i)$.
		The only non-trivial inductive step is $\exists^{\fr}_\geq \phi$, where $\phi$ is a formula in context $Y$, and $\geq \in \Val(X,Y)$.
		To see the direction from left to right,
		take some admissible co-valuation $\prod_D w_i \in \Val(Y,\prod_D M_i)$
		such that  $\prod_D M \models_{\prod_D w_i} \phi$,
		and $\prod_D v_i $ consolidates  $\prod_D w_i$ with pattern $\geq$.
		Since $v_i$ consolidates  $w_i$ with pattern $\geq$ for $D$-many  $i$, by the induction hypothesis,
		for $D$-many $i$, we have $M_i \models_{w_i} \phi$.
		Then we have $M_i \models_{v_i} \exists^{\fr}_\geq \phi$ for $D$-many $i$.
		To see the other direction,
		suppose $M_i \models_{v_i} \exists^{\fr}_\geq \phi$ for $D$-many $i$.
		For those $i$, take $w_i$ such that $v_i $ consolidates $w_i$ with pattern $\geq$ and $M_i \models_{w_i}\phi$,
		and let $w_i$ be an arbitrary admissible co-valuation of $M_i$ in $X$ otherwise.
		Then $\prod_D v_i$ consolidates $\prod_D w_i$ with pattern $\geq$, and $\prod_D M_i \models_{\prod_D w_i} \phi$ by the induction hypothesis,
		so we obtain $\prod_D M_i \models_{\prod_D w_i} \exists^{\fr}_\geq \phi$.
	\end{proof}
	
	A standard argument yields:
	\begin{corollary}[Compactness theorem]
		\label{co:Compactness}
		A type is consistent iff all its finite subsets are consistent. 
	\end{corollary}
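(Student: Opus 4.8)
The statement to prove is the Compactness theorem (Corollary~\ref{co:Compactness}): a type is consistent iff every finite subset is consistent. The plan is to derive this as a standard corollary of \L o\'s's theorem, exactly as in classical first-order model theory, using an ultraproduct indexed by the finite subsets of the type.

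First I would fix the nontrivial direction: assuming every finite subset of a type $p(X)$ is consistent, I must produce a single structure realizing all of $p$ at once. (The reverse direction is trivial, since any witness realizing $p$ realizes each finite subset.) The index set will be $I = [p]^{<\omega}$, the collection of all finite subsets of $p$. For each $s \in I$, consistency of $s$ gives a structure $(M_s, V_s)$ together with an admissible co-valuation $v_s \in V_s^X$ with $M_s \models_{v_s} \phi$ for every $\phi \in s$. Next I would equip $I$ with the Fr\'echet-style filter generated by the ``cones'' $\hat{\phi} = \{ s \in I : \phi \in s\}$ for $\phi \in p$. These cones have the finite intersection property, since $\hat{\phi_1} \cap \cdots \cap \hat{\phi_k} \ni \{\phi_1,\dots,\phi_k\}$, so they generate a proper filter, which I then extend to an ultrafilter $D$ on $I$.

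Now form the ultraproduct $(\prod_D M_s, \prod_D V_s)$ together with the co-valuation $\prod_D v_s \in \prod_D V_s$, which is an admissible co-valuation in context $X$ on a genuine structure (the class of structures being closed under ultraproducts, as established just before \L o\'s's theorem in the excerpt). For each $\phi \in p$, the set of indices $s$ with $M_s \models_{v_s} \phi$ contains $\hat{\phi}$: indeed, whenever $\phi \in s$ we chose $v_s$ so that $M_s \models_{v_s} \phi$. Since $\hat{\phi} \in D$, the set of such $s$ is $D$-large, so by \L o\'s's theorem $\prod_D M_s \models_{\prod_D v_s} \phi$. As $\phi \in p$ was arbitrary, $\prod_D v_s$ realizes the whole type $p$, witnessing its consistency.

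The only genuine subtlety, and the step I would be most careful about, is making sure the framework admits the same ``standard argument'' as the classical case: the co-valuation $\prod_D v_s$ must live in the same fixed context $X$ for all $s$ (which is why $p(X)$ is a type in a fixed context, and why I choose each $v_s \in V_s^X$), and I must invoke \L o\'s's theorem in the form stated, which already presupposes the $v_s$ share the context $X$. There is no real obstacle beyond bookkeeping, since the preceding paragraph of the excerpt has already verified that Axioms~(I) and~(II) are preserved under ultraproducts, so $(\prod_D M_s, \prod_D V_s)$ is legitimately a structure and \L o\'s's theorem applies verbatim.
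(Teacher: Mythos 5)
Your proof is correct and is precisely the ``standard argument'' the paper invokes without spelling out: index by finite subsets of the type, take an ultrafilter extending the filter generated by the cones $\hat{\phi}$, and apply \L{}o\'s's theorem to the resulting ultraproduct. Your attention to keeping all $v_s$ in the fixed context $X$ is the right bookkeeping point and matches how the paper's ultraproduct of co-valuations is set up.
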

	
	

	\subsection{The downward L\"owenheim-Skolem theorem}


	Let $(M,V)$, $(N,W)$ be structures. An injection $f:W \rightarrow V$ is called an \emph{embedding} if
	$$ v=C_{ \geq} (w) \iff f(v)=C_ {\geq} (f(w)).$$
	If, additionally,
	$$ M \models_w \phi \iff N \models_{f(w)} \phi$$
	for $w \in W$, then $f$ is called an \emph{elementary embedding}.

	\begin{theorem}[the downward L\"owenheim-Skolem theorem]
		\label{th:LS}
		Let $M = (M, V)$ be a structure in a countable signature $L$. For every $v_0 \in V$, there exists a compact $\omega$-structure $N=(N,W)$, and an elementary embedding $f: W \to V$ such that $v_0 \in f[W]$. In particular, $M$ and $N$ are elementarily equivalent, and for every $v_0 \in V$, there is a compact $\omega$-structure realizing the type $\tp(v_0)$.
	\end{theorem}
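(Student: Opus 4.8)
The plan is to build the countable compact $\omega$-structure $N$ by selecting a countable subfamily $W_0 \subseteq V$ that is rich enough to witness all the relevant existential formulas, then close it off into an $\omega$-chain that induces a compact $\omega$-structure via the Barto\v s--Bice--Vignati machinery developed in Section 3. The guiding intuition is a Skolem-closure argument adapted to the co-valuation setting: starting from $v_0$, I repeatedly add fragmentations that witness satisfied existential formulas $\exists^{\fr}_\geq \phi$, and consolidations and amalgamations (Axiom (I), Proposition \ref{pr:Amalgamation}) needed to keep the family directed and closed. The crucial fact enabling a countable construction is Theorem \ref{th:SequenceDetermines}: since every formula over a consolidation pattern reduces to a formula over the fragment, and since Remark \ref{re:BoundOnZII} and Proposition \ref{pr:BoundOnZI} bound the size of witnessing contexts, at each stage only finitely many ``new'' witnesses (up to the finitely many choices of patterns) need to be added.

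First I would enumerate all pairs $(\phi, \geq)$ with $\phi$ a formula and $\geq$ a consolidation pattern in the countable signature $L$ (there are countably many, since $L$ is countable and contexts are finite). Then I would construct an increasing sequence of finite subfamilies $W_0 \subseteq W_1 \subseteq \cdots$ of $V$, where $W_0 = \{v_0\}$ (closed under the trivial operations), and at stage $n+1$ I would: (a) for each $w \in W_n$ and each enumerated formula $\exists^{\fr}_\geq \phi$ with $M \models_w \exists^{\fr}_\geq \phi$, add a witnessing fragmentation $w' \in V$, which exists by the satisfaction clause and can be taken with context bounded by Proposition \ref{pr:BoundOnZI}; (b) for each pair $u, v \in W_n$, add a common fragmentation from Axiom (I), again with bounded context by Proposition \ref{pr:BoundOnZI}; (c) add the relevant consolidations by contexts so the family stays closed. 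Setting $W_\infty = \bigcup_n W_n$ yields a countable, directed subfamily of $V$ closed under the operations, in which every satisfied existential is witnessed. By directedness and countability I can extract a cofinal $\omega$-chain $(u_i)$ in $W_\infty$ (each $u_{i+1}$ a common fragmentation of $u_i$ and the $i$-th element), and this chain generates a countable $\omega$-structure.

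Next I would pass to the compact $\omega$-structure $(N,W)$ induced by the $\omega$-chain $(u_i)$, as defined just before Remark \ref{re:SpectraAreEnough}: set $N = \Sp \PPb$ for $\PPb = \bigsqcup [u_i]$, with induced co-valuations $w_i$ and closed relations $R^N$. By Corollary \ref{co:SCCompactAreO-Structures} this is indeed a compact $\omega$-structure. The map $f : W \to V$ sends the induced $w_i$ (and their consolidations) back to the corresponding $u_i$ (and consolidations); by Corollary \ref{co:IsomorphicPosets} and Proposition \ref{le:GoodBehavior} the posets $\bigsqcup[u_i]$ and $\bigsqcup[w_i]$ are order-isomorphic, so $f$ is a well-defined embedding preserving the consolidation relation. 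Since $v_0 = u_0$ appears in the chain, $v_0 \in f[W]$. The main obstacle, and where the construction must be calibrated, is showing $f$ is \emph{elementary}, i.e.\ $M \models_w \phi \iff N \models_{f(w)} \phi$. This I would prove by induction on $\phi$, with the existential case being the heart of the matter: the forward direction uses that $W_\infty$ contains a witnessing fragmentation (step (a) of the construction), while the backward direction requires that \emph{any} witness in $N$ corresponds, via Proposition \ref{pr:Amalgamation} (amalgamation) and Theorem \ref{th:SequenceDetermines}, to a satisfied formula over some $u_i$ in $M$ — this is precisely why the witnessing contexts must be uniformly bounded (Remark \ref{re:BoundOnZII}, Proposition \ref{pr:BoundOnZI}) and why Axiom (II), which governs how relations descend to fragments, is invoked. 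Verifying that the atomic relational clauses transfer correctly under $f$ (using the closedness of relations in $\tau_W$ and the defining condition of $R^N$) is the remaining routine but essential check.
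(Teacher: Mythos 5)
Your overall strategy is the same as the paper's: a Skolem-type closure inside $V$ starting from $v_0$, extraction of an $\omega$-chain, passage to the induced compact $\omega$-structure on the spectrum, and an induction on formulas using Theorem \ref{th:SequenceDetermines} and Corollary \ref{co:IsomorphicPosets}. However, there is one genuine gap: your closure steps (a)--(c) add witnesses for existential formulas and common fragmentations for directedness, but never add the \emph{Axiom (II) witnesses}. The paper's construction explicitly requires (its condition (2)) that for every $v_i$ in the chain, every $R\in L$, and every tuple $\bar{x}$ with $M\models_{v_i}R(\bar{x})$, the witnessing co-valuation $w$, tuple $\bar{y}$, and points $\bar{a}\in[\bar{y}]_{\pi_w}$ of Axiom (II) are captured by the generated family, so that some $v_j$ in the chain refines $w$.

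This omission breaks precisely the step you call ``routine'': the direction $M\models_{v_i}R(\bar{x})\Rightarrow N\models_{w_i}R(\bar{x})$. To realize $R(\bar{x})$ in $N=\Sp\PPb$ one must produce minimal selectors $S_k$ with $x_k\in S_k$ and $R^N((S_k))$; the natural candidates are minimal selectors inside $S'_k=\{x\in\PPb: a_k\in x\}$ for some $\bar{a}\in[\bar{x}]_{v_i}\cap R^M$. But if $a_k$ lies in the overlap of $[x_k]_{v_i}$ with some other $[x'_k]_{v_i}$, a minimal selector $S_k\subseteq S'_k$ may select, at deeper levels of the chain, only pieces lying below $x'_k$ and not below $x_k$, so that $x_k\notin S_k$ and the selector fails to witness $R(\bar{x})$ at $x_k$. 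The paper avoids this by choosing $\bar{a}\in[\bar{y}]_{\pi_w}$ for an Axiom (II) witness $w$ (so each $a_k$ lies in a \emph{unique} element of $[w]$, which sits below $x_k$) and by ensuring some $v_j$ in the chain refines $w$; then every element of a deep level containing $a_k$ is forced below $x_k$, and the up-set property of minimal selectors gives $x_k\in S_k$. You do invoke Axiom (II), but you attribute its role to the backward direction of the existential case rather than to the atomic case, and your construction as written does not guarantee that the needed witnesses are reachable from the chain. Adding a fourth closure step -- for each $u\in W_n$, each $R\in L$, and each $\bar{x}$ with $M\models_u R(\bar{x})$, adjoin the Axiom (II) witnesses (with contexts bounded via Remark \ref{re:BoundOnZII}) -- repairs the argument and brings it in line with the paper's proof.
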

	
	\begin{proof}
		Fix $v_0 \in V$. Using Axioms (I) and (II), we construct an $\omega$-chain $(v_i)$ in $V$ such that, for  $V'=\langle \{v_i\} \rangle$,
		
		\begin{enumerate}
			\item $v_0 \in V'$,
			\item for all $i \in \NN$, tuples $\bar{x}$ in $X_{v_i}$, and $R \in L$ there are witnesses for $v_i$, $\bar{x}$, $R$ in $V'$,
			\item if $M \models_{v_i} \exists^{\fr}_{\geq} \phi$, then there is $j \in \NN$, and a consolidation $w$ of $v_j$ that witnesses it, i.e., $v_i=C_{ \geq} (w)$ and $M \models_{w} \phi$.
		\end{enumerate}
		 Let $\PPb$ be the $\omega$-chain poset $(\bigsqcup [v_i], \supseteq)$. Let $(\Sp \PPb,W)$ be the compact $\omega$-structure induced by $(v_i)$, where $W=\langle \{w_i\} \rangle$, and $(w_i)$ is the core induced by $(v_i)$.
		 We claim that, for every $i \in \NN$,
		\[ M \models_{v_i} \phi \iff N \models_{w_i} \phi. \]
		By Theorem \ref{th:SequenceDetermines}, this will finish the proof.
		
		Fix $R \in L$ be of arity $m$. Suppose that $N \models_{w_i} R(\bar{x})$, for $\bar{x}=(x_k)$. This means that there are $S_k \in (x_k)^\in$, $k \leq m$, such that $R^{N}((S_k))$, so $M \models_{v_i} R(\bar{x})$. On the other hand, if $M \models_{v_i} R(\bar{x})$, using Axiom (II), we fix witnesses $v \in (V')^Y$, $\bar{y} \in Y^m$, $\bar{a}$ for $v_i$, $\bar{x}$, $R$, i.e.,  $\bar{a} \in [\bar{y}]_{\pi_v}$, $R^M(\bar{a})$, and $\bar{x} \geq^{v_i}_v \bar{y}$.  
		For each $k \leq m$, we consider $S'_k=\{x \in \PPb: a_k \in x\}$. As every $[v_{i'}]$, $i' \in \NN$, is a cover of $M$, every level $\PPb_{i'}$ has non-empty intersection with $S'_k$.  Hence, by Remark \ref{re:LevelsEnough}, $S'_k$ is a selector, and, by Lemma \ref{le:SelectorFilterAndContainsMin}, there is a minimal selector $S_k \subseteq S'_k$. Let $j \in \NN$ be such that $v_j$ fragments $v$. Observe that for every $y \in [v_j]$ we have that $y \in S'_k$ implies $x_k  \geq^{v_i}_v \circ \geq^{v}_{v_j} y$; in particular, it must hold that  $S_k \in (x_k)^\in$. Finally, $\bar{a}$ ensures that for every $i' \in \NN$, if $x'_k \in S_k$, $k \leq m$, then $M \models_{v_{i'}} R((x'_k))$. Hence,  $R^N((S_k))$, and $N \vDash_{w_i} R(\bar{x})$.
		
		
		To finish the proof, we proceed by induction on the complexity of formulas. Clearly, the only non-trivial case is $\exists^{\fr}_{\geq} \phi$. Let $i \in \NN$, and $\phi$ be such that $M \models_{v_i} \exists^{\fr}_{\geq} \phi$, and let $v \in V$ witness it, i.e., $v_i=C_{ \geq} (v)$ and $M \models_v \phi$.  By (3), we can assume that $v \in V'$, i.e., there are $j \geq i$, $\geq_1$ and $\geq_2$ such that $v_i=C_{\geq_1}(v_j)$, $v=C_{\geq_2}(v_j)$. By the inductive assumption, $N \models_{C_{\geq_2}(w_j)} \phi$. By Corollary \ref{co:IsomorphicPosets}, $C_{\geq_1}(w_j)=w_i$, and $C_{\geq_1}(w_j)$ consolidates  $C_{\geq_2}(w_j)$ with pattern $\geq$, i.e., $N \models_{w_i} \exists^{\fr}_{\geq} \phi$. Using Corollary \ref{co:IsomorphicPosets} again, the other implication can be proved exactly in the same way.
		
	\end{proof}
	
	

	\subsection{The Polish space $\Mod(L)$ of infinite compact $\omega$-structures in a countable signature $L$}
	
	Let $L$ be  a countable signature. Fix pairwise disjoint contexts $X_i$, $i \in \NN$, of size $|X_0|=1$, $|X_{i+1}|=2^{|X_i|}$, and let $\PPb=\bigcup X_i$. Let $M=(M,V)$ be an infinite compact $\omega$-structure, and let $(v_i)$ be a core of $V$. We can assume that $v_i \in V^{X_i}$. Let $a \in 2^{\PPb \times \PPb}$ be the characteristic function of the $\omega$-chain ordering $\geq$ on $\PPb$ determined by $(v_i)$, i.e. $$x \geq y \iff x \geq^{v_i}_{v_j} y,$$ where $x \in X_i$, $y \in X_j$, $i \leq j$.  For a fixed $m \in \NN$, let $T_m$ be the family of all the $m$-tuples in $X_i$, $i \in \NN$. For $R \in L$, we define $b_R \in 2^{T_{a(R)}}$ by
	$$b_R(\bar{y})=1 \iff M \models_{v_i} R^M(\bar{y}),$$
	where $i \in \NN$ is unique such that $\bar{y}$ is a tuple in $X_i$. Note that because $R^M \subseteq M^m$ is closed, $b^R$ determines $R^M$. Thus, $M$ can be coded as an element $\mu_M$ of the space $2^{\PPb \times \PPb} \times \prod_{R \in L}2^{T_{a(R)}}$ with the product topology (i.e., the topology of the Cantor set).
	
	We fix an enumeration $\alpha: \NN \to C$ of the set $C$ defined by
	$$C=\{ (i,R,\bar{x}): i \in \NN, \, R \in L, \, \bar{x} \in X_i^{a(R)} \}.$$
	%
	%
	Observe that, by Remark \ref{re:BoundOnZII} and Proposition \ref{pr:BoundOnZI}, there exists a function $\beta: \NN  \to \NN$, so that the following holds. For every compact $\omega$-structure $(M,V)$ in signature $L$ there exists a core $(v_i)$ of $V$ such that, for every $m \in \NN$, witnesses of Axiom (II) for elements represented by tuples $\alpha(0), \ldots,  \alpha(m)$ can be found among consolidations of co-valuations $v_0, \ldots, v_{\beta(m)}$. We fix such $\beta$, and consider only codes of compact $\omega$-structures in signature $L$ that satisfy the above condition (this will be needed to secure an appropriate topological structure on the set of codes). To be more specific, we consider $\mu=(a,b) \in 2^{\PPb \times \PPb} \times \prod_{R \in L}2^{T_{a(R)}}$ satisfying the following conditions:
	\begin{enumerate}
		\item $a$ is the characteristic function of an ordering relation $\leq_\mu$ on $\PPb$ turning it into an $\omega$-chain $(X_i, \geq_\mu \uhr X_{i} \times X_{i+1})$ such that each $\geq_\mu \uhr X_{i+1}$ is co-bijective,
		\item for all $R \in L$, $i \in \NN$, and $\bar{y} \in X^{a(R)}_i$, if $b_R(\bar{y})=1$ and $\bar{y} \leq_\mu \bar{x}$, then $b_R(\bar{x})=1$.
			\item for all $R \in L$, $i \in \NN$, and $(x_k) \in X^{a(R)}_i$, if $b_R((x_k))=1$, then there is $j \leq 
			\beta \circ \alpha^{-1}(i,R,(x_k))$, a minimal cover $\mathcal{A}$ of $X_j$, and $(A_k) \in \mathcal{A}^{a(R)}$ such that
			\begin{enumerate}[a)]
				\item $x_k \geq_\mu x$,  for $x \in A_k$,
				\item for all $j' \geq j$, there is $(y_k) \in X^{a(R)}_{j'}$ such that $b_R((y_k))=1$, and, for all $k \leq a(R)$ and $x' \in X_{j}$, we have that  $x' \geq_\mu y_k$ implies $x'  \in  A_k$.
			\end{enumerate}
	\end{enumerate}
We define co-valuations $v_i \in \Val(X_i,M)$, $i \in \NN$, on $M=\Sp(\PPb,\leq_\mu)$ by
\[ (x,S) \in v_i \iff x \in S, \]
and relations $R^M$ on $M$, $R \in L$, by
\[ R^M((S_k)) \iff \forall (x_k) \in T_{a(R)} \, ( (\forall k \leq a(R) \, x_k  \in S_k) \rightarrow b_R((x_k))=1).  \]
Finally, we put $V=\langle \{v_i\} \rangle$, and $M_\mu=(M,V)$. Observe that all $R^M$ are closed in $\tau_V$, so, by Corollary \ref{co:SCCompactAreO-Structures}, $M_\mu$ is a compact $\omega$-structure (note that Condition (3) has not been used so far).
	
Let us define $\Mod(L)$ as the subspace of $2^{\PPb \times \PPb} \times \prod_{R \in L}2^{T_{a(R)}}$ consisting of all the elements that satisfy Conditions (1)-(3) above.  It is immediate to verify that these conditions are closed, i.e., that $\Mod(L)$ is a compact (in particular: Polish) subspace of $2^{\PPb \times \PPb} \times \prod_{R \in L}2^{T^{a(R)}}$. To sum up, we get
	
	\begin{theorem}
		\label{th:ModIsEnough}
		Let $L$ be a countable signature. Then $\Mod(L)$ is a zero-dimensional metrizable compact space, and for every compact $\omega$-structure $M$, there exists $\mu \in \Mod(L)$ such that $M_\mu$ is isomorphic with $M$.
	\end{theorem}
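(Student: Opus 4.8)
The plan is to treat the two assertions separately. First I would observe that the ambient space $2^{\PPb \times \PPb} \times \prod_{R \in L} 2^{T_{a(R)}}$ is a countable product of two-point discrete spaces, since $\PPb$, each $T_{a(R)}$, and $L$ are countable; hence it is a zero-dimensional compact metrizable (in particular Polish) space homeomorphic to the Cantor set. As a closed subspace of a zero-dimensional compact space is again zero-dimensional and compact, the first assertion reduces to showing that Conditions (1) and (2) cut out a closed set, which I would do by writing each condition as a countable intersection of constraints that each depend on only finitely many coordinates and are therefore clopen.

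For Condition (1): that $a$ is the characteristic function of a reflexive, antisymmetric, transitive relation is a countable conjunction of clopen constraints indexed by single elements, pairs, and triples of $\PPb$; that the comparabilities respect the grading (equality within each $X_i$ and monotone rank along $\leq_\mu$) is again a conjunction of per-pair clopen constraints; and co-bijectivity of $\geq_\mu \uhr X_i \times X_{i+1}$---equivalently, that $[\geq_\mu \uhr X_i \times X_{i+1}]$ is a minimal cover of $X_{i+1}$ (Remark \ref{re:CoInjSurj})---depends only on the finitely many coordinates lying in $X_i \times X_{i+1}$ and so is clopen for each $i$. For Condition (2), each instance ``$b_R(\bar y) = 1$ and $\bar y \leq_\mu \bar x$ imply $b_R(\bar x) = 1$'' involves only the two coordinates $b_R(\bar y)$, $b_R(\bar x)$ together with the entries of $a$ at the pairs $(y_k, x_k)$, hence is clopen, and the whole condition is the countable conjunction over all $R$, $i$, $\bar y$, $\bar x$. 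Thus $\Mod(L)$ is closed, giving the first assertion.

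For the second assertion, let $M = (M,V)$ be an infinite compact $\omega$-structure; this is the class parametrized by $\Mod(L)$, and the only one for which $M_\mu \cong M$ is possible, since $M_\mu$ is always compact. The core of the argument is to obtain a core $(v_i)$ of $V$ with $v_i \in V^{X_i}$, i.e.\ with contexts of the exact prescribed sizes $1, 2, 4, 16, \dots$. Granting this, I would put $a$ equal to the characteristic function of the inclusion order on $\PPb = \bigsqcup [v_i]$ and set $b_R(\bar y) = 1$ iff $M \models_{v_i} R(\bar y)$, where $i$ is the level of $\bar y$. Condition (1) then holds because the $[v_i]$ are minimal covers each consolidating the next, so $(\PPb, \supseteq)$ is a level-injective $\omega$-chain with co-bijective consecutive relations (Proposition \ref{pr:LevelMin-CovMin}, Corollary \ref{co:IsomorphicPosets}); Condition (2) holds because atomic truth is preserved upward under consolidation. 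Unwinding the definitions shows that $M_\mu$ is exactly the compact $\omega$-structure induced by $(v_i)$: the induced co-valuations agree, and the decoding $R^{M_\mu}((S_k)) \iff \forall (x_k)(\forall k\, x_k \in S_k \to b_R((x_k)) = 1)$ coincides with the induced relation, since $b_R((x_k)) = 1 \iff M \models_{v_i} R((x_k))$. Hence $M_\mu \cong M$ by Remark \ref{re:SpectraAreEnough}.

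The main obstacle is exactly the construction of a core with contexts of the prescribed cardinalities, since minimality of covers forbids padding with redundant sets. I would build $(v_i)$ recursively from a fixed core $(u_j)$ of $V$, starting with $v_0$ the trivial cover $\{M\}$ (of size $1 = |X_0|$). Given $v_i$, I would choose $u_j$ fine enough to fragment $v_i$ and take $v_{i+1}$ to be a consolidation of $u_j$---automatically admissible, as $V$ is closed under consolidations---grouping $[u_j]$ into exactly $2^{|X_i|}$ classes whose unions form a minimal cover still fragmenting $v_i$. Such a regrouping exists precisely because $M$ is infinite: the gap between $|X_i|$ and $2^{|X_i|}$ leaves room to split, and $[u_j]$ may be taken with more than $2^{|X_i|}$ members; the exponential growth of the $|X_i|$ is chosen to accommodate the witness-size bounds of Remark \ref{re:BoundOnZII} and Proposition \ref{pr:BoundOnZI} that govern the contexts needed at each stage. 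Cofinality---needed to make $(v_i)$ a genuine core---is secured by interleaving, requiring $v_{i+1}$ to refine the $i$-th member of $(u_j)$ whenever the budget $2^{|X_i|}$ permits, the finitely many exceptional early stages being absorbed into later ones. Once the core is fixed, $M_\mu$ is a compact $\omega$-structure by Corollary \ref{co:SCCompactAreO-Structures}, and the computations above finish the proof.
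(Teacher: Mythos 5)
Your route is the same as the paper's: the paper proves this theorem simply by carrying out the construction preceding it, and the two genuinely load-bearing points are (i) that Conditions (1) and (2) cut out a closed subset of the Cantor-type product (which you verify correctly and in more detail than the paper), and (ii) the paper's unexplained assertion ``we can assume that $v_i \in V^{X_i}$''. You have correctly identified (ii) as the real content of the second half, correctly noted that the statement must be read for infinite compact $\omega$-structures, and your splitting argument for producing $v_{i+1}$ of size exactly $2^{|X_i|}$ fragmenting $v_i$ is sound: the containment-classes of a fine $[u_j]$ relative to $[v_i]$ number at most $2^{|X_i|}-1$, and since $M$ is infinite one can take $|[u_j]| \geq 2^{|X_i|}$ and split classes to hit the target size while preserving minimality and the fragmentation of $v_i$.

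The gap is in your cofinality step. First, a core requires every $u_j$ to be a consolidation of some $v_i$, i.e.\ to be \emph{fragmented} by it, not merely refined by it, so ``requiring $v_{i+1}$ to refine the $i$-th member of $(u_j)$'' is already too weak. More seriously, ``whenever the budget $2^{|X_i|}$ permits, the finitely many exceptional early stages being absorbed into later ones'' presupposes that the budget eventually permits, and this is exactly what is unclear. The only general device for producing an admissible co-valuation of controlled size fragmenting prescribed covers is the class construction of Proposition \ref{pr:BoundOnZI}, and fragmenting $v_i$ alone already costs up to $2^{|X_i|}-1$ of the budget $|X_{i+1}|=2^{|X_i|}$ --- that is precisely why your splitting step works, but it leaves essentially no slack: fragmenting $v_i$ jointly with a fixed $u_l$ of size $m\geq 2$ costs up to $(2^{|X_i|}-1)(2^{m}-1) > 2^{|X_i|}$, and waiting until a later stage $i'$ does not help, since the cost of fragmenting $v_{i'}$ grows at exactly the same rate as the budget $2^{|X_{i'}|}$. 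So the ``exceptional stages'' could a priori be all stages, and without a sharper bound on the number of containment-classes actually realized (or a different organization of the recursion) the chain you build generates a possibly proper subfamily of $V$, in which case Remark \ref{re:SpectraAreEnough} gives an isomorphism with the wrong structure. To be fair, the paper offers no justification for this step either, so you have inherited the gap rather than introduced it; but your proposal presents as routine the one point of the proof that is not.
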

	
	We will also need a finer Polish topology on $\Mod(L)$. First of all, we have:
	
\begin{proposition}
\label{pr:GoodBasis}
Let $\mu=(a,b) \in \Mod(L)$, $R \in L$, $i \in \NN$, and $\bar{x} \in X^{a(R)}_i$. Then $$b(\bar{x})=1 \iff M_\mu \vDash_{v_i} R(\bar{x}).$$
	
\end{proposition}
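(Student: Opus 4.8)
The plan is to unfold both sides of the claimed equivalence through the definitions of $M_\mu$. Since $[x_k]_{v_i}=x_k^\in$ by the definition of $v_i$, the atomic clause of the satisfaction relation gives
\[ M_\mu\vDash_{v_i}R(\bar x)\iff \exists\,(S_k)\in\textstyle\prod_k x_k^\in\ \text{with}\ R^M((S_k)),\]
and unravelling the definition of $R^M$,
\[ R^M((S_k))\iff \forall\,(z_k)\in T_{a(R)}\ \big((\forall k\ z_k\in S_k)\to b_R((z_k))=1\big).\]
The implication from right to left is then immediate: if such minimal selectors $S_k\ni x_k$ exist, the tuple $\bar x$ itself satisfies $x_k\in S_k$ for all $k$, so taking $(z_k)=\bar x$ forces $b(\bar x)=b_R(\bar x)=1$.

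For the converse I would manufacture the selectors $S_k$ from the defining conditions of $\Mod(L)$. Assume $b_R(\bar x)=1$. First I apply Condition (3) to obtain a level $j\geq i$, a minimal cover, and pieces $A_k\subseteq X_j$ with $A_k\subseteq\{x:x\leq_\mu x_k\}$, such that at every level $l\geq j$ there is a tuple $(y_k)\in X_l^{a(R)}$ with $b_R((y_k))=1$ all of whose level-$j$ ancestors lie in $A_k$. Calling such tuples \emph{good}, I organise them into a tree whose nodes at level $l$ are the good tuples in $X_l^{a(R)}$, a node at level $l+1$ being joined to any good tuple $\geq_\mu$-above it at level $l$. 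Condition (2) guarantees that a $\leq_\mu$-parent of a good tuple is again good (its level-$j$ ancestors shrink, staying inside $A$, and monotonicity preserves $b_R=1$), so every node has a parent; the levels $X_l$ are finite, so the tree is finitely branching and König's lemma yields an infinite branch. This produces, in each coordinate $k$, a $\leq_\mu$-decreasing thread $(y_k^{(l)})_{l\geq j}$ with $b_R((y_k^{(l)})_k)=1$ for every $l$ and $y_k^{(l)}\leq_\mu x_k$.

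It then remains to promote each thread to a minimal selector through $x_k$ and to verify $R^M((S_k))$. The up-set $T_k=\{y_k^{(l)}\}^{\leq_\mu}$ is a filter containing $x_k$ (since $y_k^{(j)}\in A_k$ forces $x_k\geq_\mu y_k^{(l)}$), so by Lemma \ref{le:SelectorFilterAndContainsMin} it contains a minimal selector. Using the level-injective construction of Proposition \ref{pr:LevelMin-CovMin}, I would thin the thread so that $x_k$ is its unique ancestor in $X_i$; this makes $T_k$ itself a minimal selector and secures $x_k\in S_k:=T_k$. Finally, because every element of $S_k$ lies $\geq_\mu$ above some entry of the thread, any tuple $(z_k)$ with $z_k\in S_k$ dominates, coordinatewise, a thread-tuple $(y_k^{(l)})_k$ for $l$ large; since $b_R((y_k^{(l)})_k)=1$, Condition (2) yields $b_R((z_k))=1$. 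As $(z_k)$ was arbitrary, this is exactly $R^M((S_k))$, whence $M_\mu\vDash_{v_i}R(\bar x)$.

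I expect the third paragraph to be the main obstacle: one must reconcile the König branch, which secures goodness ($b_R=1$), with the level-injective thinning of Proposition \ref{pr:LevelMin-CovMin}, which secures that $T_k$ is a \emph{minimal} selector containing the prescribed $x_k$, and do so simultaneously in all $a(R)$ coordinates. The delicate point is that minimality of $S_k$ is precisely what makes the good thread coinitial in $S_k$, and hence what allows Condition (2) to upgrade goodness \emph{along the thread} to goodness on the whole box $\prod_k S_k$; were $S_k$ merely a proper sub-selector of $T_k$, this coinitiality, and with it the verification of $R^M((S_k))$, could fail.
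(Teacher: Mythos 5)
Your overall route coincides with the paper's: right-to-left is immediate from the definitions of $v_i$ and $R^{M}$, and left-to-right extracts from Condition (3) a descending thread of $b_R$-good tuples confined over $(A_k)$, takes minimal selectors in the up-sets of its coordinates, and upgrades goodness from the thread to the whole box $\prod_k S_k$ via Condition (2). Your explicit K\"onig argument (goodness is inherited by $\leq_\mu$-parents, levels are finite and nonempty by (3b), hence a coherent infinite branch exists) is a welcome elaboration of a step the paper compresses into ``using (3b), as in Proposition \ref{pr:LevelMin-CovMin}''.

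The obstacle you flag in your third paragraph, however, dissolves once you notice that you are asking for more than you need. You do not need $T_k$ itself to be a minimal selector, nor do you need $x_k$ to be the \emph{unique} level-$i$ element of $S_k$ (that stronger conclusion, $S_k\in[x_k]_{\pi_{v_i}}$, is what the unique-parent thinning buys, but the proposition only requires $S_k\in x_k^\in$). Take any minimal selector $S_k\subseteq T_k$ provided by Lemma \ref{le:SelectorFilterAndContainsMin}(1). Since $\leq_\mu$ is level-injective, the level $X_j$ from Condition (3) is a cap, so $S_k\cap X_j\neq\emptyset$. Any $z\in T_k\cap X_j$ satisfies $z\geq_\mu y^{(l)}_k$ for some $l\geq j$, hence $z$ is a level-$j$ ancestor of a good tuple and so $z\in A_k$ by the defining property of your branch; by (3a), $z\leq_\mu x_k$. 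As minimal selectors are up-sets, $x_k\in S_k$ follows, with no thinning and no tension with goodness. Your final coinitiality remark should also be adjusted accordingly: every element of $S_k$ dominates a thread entry simply because $S_k\subseteq T_k$ by choice, not because of minimality of $S_k$; minimality is only used to make $S_k$ a point of $\Sp\PPb$ and an up-set. With that correction your verification of $R^{M}((S_k))$ goes through verbatim.
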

	
	\begin{proof}
		The implication from right to left is obvious. We show the converse. For any given $i \in \NN$, $R \in L$, and tuple $\bar{x}=(x_k) \in X^{a(R)}_i$, and for $j$, $\mathcal{A}$, and $(A_k)$ as in Condition (3) above, we define $\succeq \in \Val(\mathcal{A}, X_j)$ by 
		$$ A \succeq x \iff x \in A.$$
		We put $v=C_{\succeq}(v_j)$. By (3a), we have $[x_k]_{v_i} \supseteq \bigcup_{a \in A_k} [a]_v$. 
		Using (3b), we construct, exactly as in the proof of Proposition \ref{pr:LevelMin-CovMin}, $x^j_k \in X_{i+j}$, $j \in \NN$, $k \leq a(R)$ such that, $x^0_k=x_k$, $b_R((x^j_k))=1$, and, for  $x' \in X_j$, $x' \geq_\mu x^{j+1}_k$ implies $x'=x^j_k$.  Then, since, by Condition (1), $(\PPb,\leq_\mu)$ is an $\omega$-chain, we get that, for $x' \in X_i$, $x' \geq_\mu x^k_{j+1}$ implies $x'=x^0_k=x_k$. In particular, every minimal selector $S_k \subseteq \{x^j_k\}^{\leq_\mu}$ is an element of $[x_k]_{v_i}$. By the definition of $M_\mu$, we get $R^M((S_k))$, so $M_\mu \vDash_{v_i} R(\bar{x})$.
		
		
	\end{proof}

	Proposition \ref{pr:GoodBasis} implies that the sets $[i,\geq]$, and $[i,R,\bar{x}]$, where $i \in \NN$, $\geq$ is a binary relation on $\bigcup_{i' \leq i} X_{i'}$, $R \in L$, and $\bar{x} \in X^{a(R)}$, defined by
	\[ [i,\geq]=\{ \mu \in \Mod(L): (x \geq_\mu y \leftrightarrow x \geq y), \, x, y \in \bigcup_{i' \leq i} X_{i'}\}, \]
	\[ [i, R,\bar{x}]=\{\mu \in \Mod(L): M_\mu \vDash_{v_i} R(\bar{x}) \},\]
	form a sub-basis for the topology on $\Mod(L)$ inherited from $2^{\PPb \times \PPb} \times \prod_{R \in L}2^{T_{a(R)}}$. By standard arguments as in the proof of \cite[Theorem 11.4.1]{Gao}, this topology can be refined to a Polish topology given by sub-basic open sets
	\[ [i,\geq], \]
	\[ [i,\phi]=\{\mu \in \Mod(L): M_\mu \vDash_{v_i} \phi \},\]
	where $\phi$ is a formula. We call it the \emph{logic topology} on $\Mod(L)$. For a theory $T$ in signature $L$, we denote by $\Mod(T)$ the closed (in the logic topology), and thus Polish, subspace of $\Mod(L)$ consisting of all $\mu \in \Mod(L)$ such that $M_\mu$ is a model of $T$.

	\subsection{Omitting types, isolated types, atomic models.}
	
	For a theory $T$ in a countable signature $L$, $n \in \NN$, and a fixed context $X$ of size $n$, the type space $S_n(T)$ is the set of all complete types $p(X)$ realized by models of $T$. We endow $S_n(T)$ with the standard logic topology, given by open neighborhoods $$[\phi]=\{ p \in S_n(T): \phi \in p \},$$ where $\phi$ is a formula. Clearly, $S_n(T)$ is zero-dimensional, and, by Corollary \ref{co:Compactness}, it is compact.
	
\begin{theorem}
\label{th:OM}
Let $T$ be a theory in a countable signature. Assume that, for some $n \in \NN$, $F \subseteq S_n(T)$ is meager (in the logic topology). Then, in the logic topology on $\Mod(T)$, there are comeagerly many $\mu \in \Mod(T)$ such that $M_\mu$ omits every type in $F$.
\end{theorem}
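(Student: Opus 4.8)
The plan is to show that the complementary set
\[ B=\{\mu\in\Mod(T): M_\mu \text{ realizes some } p\in F\} \]
is meager, so that its complement -- the set of $\mu$ for which $M_\mu$ omits every type in $F$ -- is comeager, and in particular dense, in the Polish space $\Mod(T)$. First I would decompose $B$ according to \emph{where} a type of $F$ is realized. Since the admissible co-valuations of $M_\mu$ are exactly the consolidations $C_\geq(v_i^\mu)$ of the core $(v_i^\mu)$ induced by $\mu$, and since each type in $F$ is complete, $M_\mu$ realizes some $p\in F$ if and only if $\tp(C_\geq(v_i^\mu))\in F$ for some $i\in\NN$ and some $\geq\in\Val(X,X_i)$. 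Fixing a decomposition $F\subseteq\bigcup_k C_k$ into closed nowhere dense sets $C_k\subseteq S_n(T)$ (possible as $F$ is meager), this gives
\[ B\subseteq\bigcup_{i\in\NN}\ \bigcup_{\geq\in\Val(X,X_i)}\ \bigcup_{k\in\NN} g_{i,\geq}^{-1}(C_k), \qquad g_{i,\geq}(\mu):=\tp(C_\geq(v_i^\mu)). \]
As $\Val(X,X_i)$ is finite for each $i$, this is a countable union, so it suffices to prove that every $g_{i,\geq}^{-1}(C_k)$ is nowhere dense in the logic topology on $\Mod(T)$.

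Each map $g_{i,\geq}:\Mod(T)\to S_n(T)$ is continuous: by Theorem \ref{th:SequenceDetermines}, for every formula $\phi(X)$ there is $\psi(X_i)$ with $M\models_{C_\geq(v)}\phi\iff M\models_v\psi$, whence $g_{i,\geq}^{-1}([\phi])=\{\mu: M_\mu\models_{v_i^\mu}\psi\}=[i,\psi]$, a sub-basic logic-open set. Consequently $g_{i,\geq}^{-1}(C_k)$ is closed, and it remains to show its complement $W:=g_{i,\geq}^{-1}(S_n(T)\setminus C_k)=\bigcup\{[i,\psi_\eta]:[\eta]\cap C_k=\emptyset\}$ (writing $\psi_\eta$ for the Theorem \ref{th:SequenceDetermines}-translate of $\eta$) is dense. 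So fix a nonempty basic logic-open $U\subseteq\Mod(T)$; I must produce $\mu\in U$ with $\tp(C_\geq(v_i^\mu))\notin C_k$. The set $U$ is a finite condition -- it fixes the $\omega$-chain ordering up to some level $N$ and forces finitely many formulas $\phi_l$ at core co-valuations $v_{i_l}^\mu$ -- and so, pulling everything back to a single sufficiently fine level $j$ via Theorem \ref{th:SequenceDetermines}, the content of $U$ together with the prescribed pattern $\geq$ reduces to a single consistent formula $\sigma$ about $v_j^\mu$, which constrains $\tp(C_\geq(v_i^\mu))$ to a fixed nonempty clopen $[\theta]\subseteq S_n(T)$ while leaving it free to be enlarged at levels beyond $j$.

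The crux is then a genericity (extension) step. Because $C_k$ is nowhere dense and $[\theta]$ is a nonempty clopen, there is a formula $\eta$ with $[\theta]\cap[\eta]\neq\emptyset$ and $[\eta]\cap C_k=\emptyset$; the goal is to realize $\eta$ in the type of $C_\geq(v_i^\mu)$ while keeping $\mu\in U$. I would argue that the enriched condition -- $\sigma$ at level $j$ together with ``$\eta$ holds at $C_\geq(v_i^\mu)$'', i.e. membership in $[i,\psi_\eta]$ -- is consistent, hence realized by some compact $\omega$-structure lying in $U\cap[i,\psi_\eta]\subseteq U\cap W$. Consistency should follow because a finite condition constrains only finitely much of the structure: using directedness (Axiom (I)) and Axiom (II), via Proposition \ref{pr:Amalgamation} and the context bounds of Remark \ref{re:BoundOnZII} and Proposition \ref{pr:BoundOnZI}, one can freely fragment at levels beyond $j$ to realize the additional consistent formula $\eta$, after which compactness (Corollary \ref{co:Compactness}) together with the downward L\"owenheim--Skolem theorem (Theorem \ref{th:LS}) supplies an actual compact $\omega$-structure, i.e. a point of $\Mod(T)$, witnessing it. With each $g_{i,\geq}^{-1}(C_k)$ shown closed and nowhere dense, $B$ is meager, and since $\Mod(T)$ is Polish -- hence Baire -- the complement is comeager, as asserted. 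I expect this final consistency/extension step to be the main obstacle: one must confirm that the positive (lower-bound) nature of the conditions in $U$, combined with the amalgamation and extension axioms, genuinely leaves room to steer $\tp(C_\geq(v_i^\mu))$ into the prescribed open set $S_n(T)\setminus C_k$.
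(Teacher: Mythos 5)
Your proposal is correct and follows essentially the same route as the paper: both reduce the theorem to showing that the countably many maps $\mu\mapsto\tp_{M_\mu}(C_{\geq}(v_i))$ from $\Mod(T)$ to $S_n(T)$ are continuous (via Theorem \ref{th:SequenceDetermines}) and category-preserving, so that the preimage of the meager set $F$ under each is meager. Your ``crux'' density/extension step is precisely the paper's inclusion $f_{\succeq}([j,\geq]\cap[j,\phi])\supseteq[\exists^{\fr}_{\geq\circ\succeq}\phi]$, which the paper likewise justifies only by an appeal to the construction in the proof of Theorem \ref{th:LS}.
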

	
\begin{proof}
For $i \in \NN$, and $\succeq \in \Val(X,X_i)$, let $f_{\succeq}:\Mod(T) \rightarrow S_n(T)$ be the mapping defined by $f_{\succeq}(\mu)=\tp_{M_\mu}(C_{\succeq}(v_i))$. We  clearly have $$f_{\succeq}([i, \psi]) \subseteq [\phi],$$ for every neighborhood $[\phi]$, where $\psi$ is given by Theorem \ref{th:SequenceDetermines} for $X_i$, $X$, $\succeq$ and $\phi$. In other words, $f_{\succeq}$ is continuous in the logic topology on $\Mod(T)$.
		Moreover, an inspection of the proof of Theorem \ref{th:LS} gives that $$f_{\succeq}([j, \geq] \cap [j, \phi]) \supseteq [\exists^{\fr}_{\geq \circ \succeq} \phi],$$
		for every $j \geq i$. Thus, $f_{\succeq}$ is category-preserving, and so $E=\bigcup_{ \succeq} f^{-1}_{\succeq}[F]$ is meager. 
		Obviously, $M_\mu$ omits every type in $F$,  for $\mu \in \Mod(T) \setminus E$.
	\end{proof}
	
	
	\begin{theorem}
		\label{th:OM-TFAE}
		Let $T$ be a theory in a countable signature, and let $p \in S_n(T)$. The following are equivalent:
		\begin{enumerate}
			\item $p$ is isolated (in the logic topology),
			\item $p$ is realized in every model of $T$,
			\item there exist $i \in \NN$ and $\geq$  such that $\{\mu \in \Mod(T): M_\mu \models_{C_\geq(v_i)} p \}$ has non-empty interior.
		\end{enumerate}
	\end{theorem}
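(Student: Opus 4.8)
The plan is to prove the two equivalences $(1)\Leftrightarrow(2)$ and $(1)\Leftrightarrow(3)$ separately, via the four implications $(2)\Rightarrow(1)$, $(1)\Rightarrow(2)$, $(1)\Rightarrow(3)$, $(3)\Rightarrow(1)$; I work under the standing assumption that $T$ is complete, so that $S_n(T)$ is compact Hausdorff and $T$ decides every sentence. For $(2)\Rightarrow(1)$ I would argue contrapositively: if $p$ is not isolated, then $\{p\}$ is a closed subset of $S_n(T)$ with empty interior, hence nowhere dense and meager. Applying Theorem~\ref{th:OM} with $F=\{p\}$ produces comeagerly many $\mu\in\Mod(T)$ whose $M_\mu$ omit $p$; since $\Mod(T)$ is a nonempty Polish, hence Baire, space, at least one such $M_\mu$ exists, giving a model of $T$ that omits $p$ and contradicting $(2)$.

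For $(1)\Rightarrow(2)$, fix $\phi(X)$ isolating $p$, i.e.\ $[\phi]\cap S_n(T)=\{p\}$. Since $p$ is realized by some model of $T$, the sentence $\exists^{\fr}_X\phi$ holds there, and by completeness it belongs to $T$ and hence holds in every model $M\models T$. Thus each such $M$ carries an admissible $v\in V^X$ with $M\models_v\phi$; then $\tp(v)\in S_n(T)$ contains $\phi$, so $\tp(v)=p$ by isolation, and $v$ realizes $p$ in $M$.

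The topological equivalence $(1)\Leftrightarrow(3)$ is where the machinery of $\Mod(T)$ enters. For $(1)\Rightarrow(3)$, by Theorem~\ref{th:LS} some compact $\omega$-structure realizes $p$, and by Theorem~\ref{th:ModIsEnough} I may take it to be $M_{\mu_0}$ with $\mu_0\in\Mod(T)$; the realizing co-valuation is admissible and so equals $C_\geq(v_i)$ for some level $i$ and $\geq\in\Val(X,X_i)$, whence $M_{\mu_0}\models_{C_\geq(v_i)}\phi$. The key move is to apply Theorem~\ref{th:SequenceDetermines} to $\geq$ and $\phi$, obtaining $\psi(X_i)$ with $M\models_{C_\geq(v_i)}\phi\iff M\models_{v_i}\psi$ uniformly; then $\mu_0\in[i,\psi]$, a nonempty logic-open set. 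For any $\mu\in[i,\psi]$ one has $M_\mu\models_{C_\geq(v_i)}\phi$, so $\tp_{M_\mu}(C_\geq(v_i))$ lies in $S_n(T)$ and contains $\phi$, forcing it to equal $p$; hence $[i,\psi]\subseteq\{\mu:M_\mu\models_{C_\geq(v_i)}p\}$, which therefore has nonempty interior.

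For $(3)\Rightarrow(1)$ I would reuse the maps $f_\succeq(\mu)=\tp_{M_\mu}(C_\succeq(v_i))$ from the proof of Theorem~\ref{th:OM}, which are continuous and category-preserving, so that preimages of meager sets are meager. Fixing $i,\geq$ as in $(3)$ and a nonempty logic-open $O$ inside $\{\mu:M_\mu\models_{C_\geq(v_i)}p\}$, every $\mu\in O$ satisfies $f_\geq(\mu)\supseteq p$, hence $f_\geq(\mu)=p$, so $O\subseteq f_\geq^{-1}[\{p\}]$. Since $O$ is nonempty open in the Baire space $\Mod(T)$, the set $f_\geq^{-1}[\{p\}]$ is non-meager, and category-preservation forces $\{p\}$ to be non-meager in $S_n(T)$; being closed, it then has nonempty interior, i.e.\ $p$ is isolated. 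The main obstacle, and the point deserving the most care, is precisely this interface between logic and topology: in $(1)\Rightarrow(3)$ the reduction of the consolidation-level statement to the core-level formula $\psi$ through Theorem~\ref{th:SequenceDetermines} is what turns realizability of $p$ into an \emph{open} condition, and in $(3)\Rightarrow(1)$ one must check that the category-preservation established for the family $f_\succeq$ indeed covers the particular $\geq$ at hand; completeness of $T$ is used only in $(1)\Rightarrow(2)$, to ensure that the isolating formula is realized in every model rather than merely in those of one completion.
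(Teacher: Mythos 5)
Your proof is correct, and three of your four implications coincide with the paper's: $(2)\Rightarrow(1)$ and $(3)\Rightarrow(1)$ are obtained, as in the paper, from Theorem~\ref{th:OM} (your use of the category-preserving maps $f_\succeq$ is just an unwinding of that theorem's proof), and $(1)\Rightarrow(2)$ is the standard argument the paper dismisses as obvious -- your explicit remark that completeness of $T$ is needed here is a point the paper leaves implicit. The genuine difference is how you reach condition $(3)$: the paper proves $(2)\Rightarrow(3)$ by a soft Baire-category pigeonhole, writing $\{\mu: M_\mu \text{ realizes } p\}$ as the countable union of the closed sets $F_{\geq,i}=\{\mu: M_\mu\models_{C_\geq(v_i)} p\}$ and concluding that if this union is all of $\Mod(T)$ then some piece is non-meager, hence (being closed) has interior. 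You instead prove $(1)\Rightarrow(3)$ directly: starting from an isolating formula $\phi$, you produce a single realizing $\mu_0\in\Mod(T)$ via Theorems~\ref{th:LS} and~\ref{th:ModIsEnough}, and then use Theorem~\ref{th:SequenceDetermines} to convert $M_{\mu_0}\models_{C_\geq(v_i)}\phi$ into membership in an explicit basic open set $[i,\psi]$ contained in $F_{\geq,i}$. Your route is more constructive -- it exhibits the open set witnessing $(3)$ rather than inferring its existence by category -- at the cost of invoking the L\"owenheim-Skolem and coding machinery; the paper's route is shorter but non-effective. Both are valid closures of the cycle of implications.
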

	
	\begin{proof}
		(1)$\Rightarrow$(2) is obvious, and (2)$\Rightarrow$(1), (3)$\Rightarrow$(1) follow from Theorem \ref{th:OM}. To show that (2)$\Rightarrow$(3), observe that the set $F \subseteq \Mod(T)$ consisting of all $\mu \in \Mod(T)$ that realize $p$ is an $F_\sigma$ subset of $\Mod(T)$. Indeed, $F=\bigcup_{\geq,i} F_{\geq,i}$, where $F_{\geq,i}$ is the set of $\mu \in \Mod(T)$ such that $C_\geq(v_i)$ realizes $p$. Therefore, if $F=\Mod(T)$, there must exist  $\geq$ and $i \in \NN$ such that $\{\mu \in \Mod(T): M \vDash_{C_\geq(v_i)} p \}$ is non-meager. As this set is closed, it must have non-empty interior.
	\end{proof}
	
As usual, we say that a type $p(X)$ of a theory $T$ \emph{isolated} if there is a formula $\phi(X)$ such that $M \models_v \phi$ implies $M \models_v \psi$, $\psi \in p$. By Theorem \ref{th:OM-TFAE}, $p \in S_n(T)$ is isolated iff it is an isolated point in $S_n(T)$.

\begin{corollary}[Omitting types]
	\label{co:OM}
		Let $T$ be a theory in a countable signature, and let $p(X)$ be a non-isolated $n$-type of $T$. Then the subset of $S_n(T)$ of all $r \in S_n(T)$ such that $p \subseteq r$, is meager in the logic topology. In particular, every countable family $F$ of non-isolated types of $T$ is omitted by some compact $\omega$-model of $T$. 
	\end{corollary}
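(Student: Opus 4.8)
The plan is to prove the two assertions separately, deriving the ``in particular'' clause from the first statement together with Theorem~\ref{th:OM}.

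For the first statement, I would first note that $P:=\{r \in S_n(T): p \subseteq r\}=\bigcap_{\psi \in p}[\psi]$ is closed, since each $[\psi]$ is clopen: as the types in $S_n(T)$ are complete, the complement of $[\psi]$ is $[\neg\psi]$. A closed set is meager as soon as it is nowhere dense, so it suffices to show that $P$ has empty interior. Suppose not; since the sets $[\phi]$ form a basis (because $[\phi]\cap[\phi']=[\phi\wedge\phi']$ and $[\top]=S_n(T)$), there is a formula $\phi$ with $[\phi]\neq\emptyset$ and $[\phi]\subseteq P$. Here $[\phi]\neq\emptyset$ says exactly that $\phi$ is consistent with $T$, i.e.\ realized by some admissible $v$ in some model of $T$. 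The key observation is that, for formulas $\phi,\psi$ in context $X$, one has $[\phi]\subseteq[\psi]$ iff $M\models_v\phi$ implies $M\models_v\psi$ for every model $M\models T$ and every admissible $v\in V^X$: the forward direction is immediate, and the converse holds because a witness $M\models_v\phi$ with $M\models_v\neg\psi$ would make $\tp_M(v)$ a point of $S_n(T)$ lying in $[\phi]\setminus[\psi]$ (using completeness of $\tp_M(v)$). Applying this to each $\psi\in p$, the inclusion $[\phi]\subseteq P$ means precisely that the consistent formula $\phi$ isolates $p$, contradicting the assumption that $p$ is non-isolated. Hence $P$ is nowhere dense, and therefore meager.

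For the ``in particular'' clause, write $F=\{p_m:m\in\NN\}$, where $p_m$ is a non-isolated type in a context of some size $n_m$; after relabelling variables I may assume each $p_m$ is a type in the fixed context of size $n_m$ used to define $S_{n_m}(T)$. For each $m$ set $F_m=\{r\in S_{n_m}(T):p_m\subseteq r\}$, which is meager in $S_{n_m}(T)$ by the first statement. Theorem~\ref{th:OM} then provides, for each $m$, a comeager set $G_m\subseteq\Mod(T)$ of those $\mu$ for which $M_\mu$ omits every type in $F_m$. I would next verify that $M_\mu$ omits every member of $F_m$ iff $M_\mu$ omits $p_m$: if some admissible $v$ realized $p_m$ in $M_\mu$, then $\tp_{M_\mu}(v)$ is a complete type of the model $M_\mu\models T$ extending $p_m$, hence an element of $F_m$ realized by $v$; conversely, realizing any $r\in F_m$ realizes $p_m\subseteq r$. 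Thus $G_m=\{\mu\in\Mod(T):M_\mu\text{ omits }p_m\}$ is comeager.

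Finally, since $\Mod(T)$ is a Polish, hence Baire, space that is nonempty (as $T$ is consistent, a compact $\omega$-model exists by Theorem~\ref{th:LS} and Theorem~\ref{th:ModIsEnough}), the countable intersection $\bigcap_m G_m$ is comeager and in particular nonempty. Any $\mu\in\bigcap_m G_m$ yields a compact $\omega$-model $M_\mu$ of $T$ omitting every $p_m$, i.e.\ omitting all of $F$. The only genuinely delicate point is the first statement --- the equivalence between $P$ having empty interior and $p$ being non-isolated --- which rests on the completeness of the types in $S_n(T)$ and on reading the semantic definition of isolation through the clopen basis $\{[\phi]\}$. Everything after that is a routine Baire-category argument built on Theorem~\ref{th:OM}.
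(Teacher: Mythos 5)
Your proof is correct and follows essentially the same route as the paper: the set $\{r\in S_n(T):p\subseteq r\}$ is closed, so non-meagerness would force it to contain a basic open $[\phi]$, and such a $\phi$ would isolate $p$; the ``in particular'' clause then follows from Theorem~\ref{th:OM} by a countable intersection of comeager sets in the Baire space $\Mod(T)$. You simply spell out details (clopenness of $[\psi]$, consistency of $\phi$, the reduction from omitting $p_m$ to omitting all its completions) that the paper leaves implicit.
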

	
\begin{proof}
Clearly, the subset $A \subseteq S_n(T)$ as in the statement of the corollary is closed in the logic topology. If $A$ is not meager, there is a formula $\phi$ such that $[\phi] \subseteq A$, i.e., $p$ is isolated; a contradiction.
\end{proof}
	
A model of a theory $T$ is \emph{atomic} if it is a compact $\omega$-structure realizing only isolated types of $T$. A compact $\omega$-structure is atomic if it is an atomic model of its theory. 

\begin{remark}
\label{re:CoreAtomicIsAtomic}
By Theorem \ref{th:SequenceDetermines}, a compact model of a theory $T$ is atomic iff it has a core consisting of co-valuations with isolated types.  
\end{remark}	

\begin{corollary}
\label{co:MiniSuzuki}
Let $T$ be a theory in a countable signature. Suppose that there exists a countable family $F$ of types such that any two compact $\omega$-models of $T$ that omit $F$  are isomorphic. Then every such model is an atomic model of $T$.
\end{corollary}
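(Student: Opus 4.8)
The plan is to argue by contradiction, exploiting the uniqueness hypothesis against the omitting types theorem. Suppose $(M,V)$ is a compact $\omega$-model of $T$ that omits $F$ (if no such model exists, the statement is vacuous), and assume toward a contradiction that $(M,V)$ is not atomic. By the definition of atomicity, $M$ then realizes some non-isolated type; since every realized type has the form $\tp(v)$ for an admissible $v \in V^X$, and such types are complete, we obtain a non-isolated complete type $p=\tp(v)$, $v \in V^X$, realized by $M$. The strategy is to produce a second compact $\omega$-model $(N,W)$ of $T$ that still omits $F$ but omits $p$; since $N$ omits $F$, the hypothesis forces $N \cong M$, and as isomorphic structures realize the same types (Remark \ref{re:CoreIsomorphism}), $M$ and $N$ realize exactly the same types, contradicting that $M$ realizes $p$ while $N$ omits it.

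The first step is to verify that every type in $F$ is non-isolated. If some $q \in F$ were isolated, then by Theorem \ref{th:OM-TFAE} it would be realized in every model of $T$, in particular in $M$, contradicting that $M$ omits $F$. (Equivalently: were some $q\in F$ isolated, no model of $T$ could omit $F$ at all, and the corollary would hold vacuously.) Hence $F$, and therefore $F \cup \{p\}$, is a countable family of non-isolated types of $T$.

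With this in hand, the second step is a direct appeal to the omitting types theorem. By Corollary \ref{co:OM}, the countable family $F \cup \{p\}$ of non-isolated types is omitted by some compact $\omega$-model $(N,W)$ of $T$. In particular $N$ omits $F$, so the uniqueness hypothesis yields $N \cong M$, while $N$ omits the complete type $p$ that $M$ realizes, the promised contradiction. Therefore $(M,V)$ realizes only isolated types, i.e.\ it is an atomic model of $T$; equivalently, by Remark \ref{re:CoreAtomicIsAtomic}, it admits a core of co-valuations with isolated types.

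The main obstacle is the first step, linking ``omitted by $M$'' to ``non-isolated''. This rests on the implication that an isolated type is realized in every model of $T$, which is precisely $(1)\Rightarrow(2)$ of Theorem \ref{th:OM-TFAE} for complete types; for a partial $q \in F$ one passes to its completions, the set $\{r \in S_{n}(T): q \subseteq r\}$, whose meagerness is equivalent to non-isolation of $q$ by Corollary \ref{co:OM}. Should $T$ fail to be complete, the clean fix is to replace $T$ by $\Th(M)$, which is legitimate because every compact $\omega$-model of $T$ omitting $F$ is isomorphic to $M$ and hence a model of $\Th(M)$; once this identification is secured, the remaining steps are a routine invocation of Corollary \ref{co:OM} and the uniqueness hypothesis.
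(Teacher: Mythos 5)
Your proof is correct and follows essentially the same route as the paper: realize a non-isolated type $p$ in $M$, use Corollary \ref{co:OM} to produce a compact $\omega$-model omitting $F \cup \{p\}$, and derive a contradiction from the uniqueness hypothesis. You are in fact slightly more careful than the paper's own two-line proof, since you explicitly justify that the types in $F$ are non-isolated (or the statement is vacuous) before invoking the omitting types theorem on $F \cup \{p\}$ — a point the paper glosses over.
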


\begin{proof}
Fix $M$ as in the statement of the corollary, and suppose that it realizes a non-isolated type $p$. By Corollary \ref{co:OM}, there is a compact model $N$ of $T$ that omits $F \cup \{p\}$. But then $M$ is isomorphic to $N$, a contradiction.
\end{proof}


	

	\begin{theorem}
		A theory $T$ in a countable signature has an atomic model iff isolated types are dense in $S_n(T)$ for every $n \in \NN$.
	\end{theorem}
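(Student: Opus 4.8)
The plan is to follow the classical proof of the atomic model existence theorem, replacing the usual Henkin-style construction by the Baire-category machinery already available on the Polish space $\Mod(T)$. Assume throughout that $T$ is consistent, so that $\Mod(T) \neq \emptyset$ by the downward L\"owenheim--Skolem theorem (Theorem~\ref{th:LS}) together with compactness (Corollary~\ref{co:Compactness}). I begin with a topological reformulation of the hypothesis: for each $n$ the set $I_n \subseteq S_n(T)$ of isolated types is open (a union of open singletons), so the set $F_n = S_n(T) \setminus I_n$ of non-isolated types is closed, and $F_n$ has empty interior exactly when $I_n$ is dense. Thus the condition ``isolated types are dense in $S_n(T)$ for every $n$'' is equivalent to ``$F_n$ is nowhere dense, hence meager, for every $n$''.

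For the implication from density to existence, apply the omitting types theorem (Theorem~\ref{th:OM}) to each meager $F_n$: the set $U_n = \{\mu \in \Mod(T) : M_\mu \text{ omits every type in } F_n\}$ is comeager in the logic topology. Since $\Mod(T)$ is Polish, and hence Baire, the intersection $\bigcap_{n} U_n$ is comeager, in particular nonempty. Any $\mu \in \bigcap_n U_n$ yields, by Theorem~\ref{th:ModIsEnough}, a compact $\omega$-structure $M_\mu$ modelling $T$ that realizes no non-isolated type in any $S_n(T)$; by definition $M_\mu$ is then an atomic model of $T$. (Atomicity can equivalently be read off a core, by Remark~\ref{re:CoreAtomicIsAtomic}.)

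For the converse, let $M = (M,V)$ be an atomic model and fix a nonempty basic open $[\phi] \subseteq S_n(T)$ with $\phi = \phi(X)$, $|X| = n$; the goal is to exhibit an isolated type inside $[\phi]$. Nonemptiness furnishes a model $N$ of $T$ with a co-valuation in context $X$ satisfying $\phi$, so the sentence $\exists_X \phi$ holds in $N$ and is therefore consistent with $T$. Assuming $T$ complete (the classical hypothesis for this theorem), this forces $\exists_X \phi \in T$, whence $M \models \exists_X \phi$; we thus obtain $v \in V^X$ with $M \models_v \phi$, so that $\tp(v) \in [\phi]$, and $\tp(v)$ is isolated because $M$ realizes only isolated types. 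As $[\phi]$ was arbitrary, $I_n$ is dense.

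The substantive content is split between the two directions. In the forward direction the only real care is that Theorem~\ref{th:OM} must be invoked simultaneously for all $n$; this is exactly what the Baire property of $\Mod(T)$ secures, after which a single parameter $\mu$ avoids every $F_n$ and the translation back to ``realizes only isolated types'' is immediate. The main obstacle lies in the converse, where completeness of $T$ is essential rather than cosmetic: for incomplete $T$ the atomic model might satisfy $\neg \exists_X \phi$ and hence realize no type of the nonempty set $[\phi]$, so that density can genuinely fail. With $T$ complete this pathology disappears, since consistency of $\exists_X \phi$ with $T$ already places it in $T$ and supplies the witness $v$.
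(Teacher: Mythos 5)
Your proof is correct and follows essentially the same route as the paper: the substantive direction is exactly the paper's argument (non-isolated types form a closed, nowhere dense, hence meager subset of each $S_n(T)$, so Theorem \ref{th:OM} together with the Baire property of the Polish space $\Mod(T)$ produces a single $\mu$ whose $M_\mu$ omits all of them), and the other direction is the one the paper dismisses as obvious. Your remark that this ``obvious'' direction actually needs $T$ complete (or at least that the atomic model satisfies $\exists_X \phi$ whenever $[\phi]\neq\emptyset$) is a fair point the paper glosses over; with that standard hypothesis your argument is precisely the intended one.
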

	
	\begin{proof}
		The implication from left to right is obvious. The converse follows from Theorem \ref{th:OM} because then non-isolated types form a meager subset of $S_n(T)$ for every $n \in \NN$.
	\end{proof}
	
	\begin{theorem}
		\label{th:UniqueAtomic}
		Let $T$ be a theory in a countable signature, let $(M,V)$, $(N,W)$ be atomic models of $T$, and suppose that $v \in V$, $w \in W$ have the same  type. Then there is an isomorphism of $M$ and $N$ that maps $v$ to $w$. 
	\end{theorem}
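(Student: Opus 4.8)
The plan is to run a back-and-forth construction on suitable cores of the two models and then invoke the spectral description of compact $\omega$-structures. Concretely, I would reduce the theorem to the following: build an $\omega$-chain $(v_i)$ that is a core of $V$ and an $\omega$-chain $(w_i)$ that is a core of $W$, with $v_0 = v$, $w_0 = w$, such that for every $i$ the co-valuations $v_i$ and $w_i$ share the same context $X_i$, are consolidated from the next level with the \emph{same} pattern $\geq_i$ (so that, via Remark \ref{re:Identifycotext}, $\bigsqcup[v_i]$ and $\bigsqcup[w_i]$ are one and the same abstract $\omega$-poset $\PPb = \bigsqcup X_i$), and satisfy $\tp_M(v_i) = \tp_N(w_i)$. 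Once this is done, equality of types forces $M \models_{v_i} R(\bar{x}) \iff N \models_{w_i} R(\bar{x})$ for every relation symbol $R$ and tuple $\bar{x}$, so the compact $\omega$-structures induced by $(v_i)$ and by $(w_i)$ carry identical relations on the common poset $\PPb$. By Remark \ref{re:SpectraAreEnough} each of $M$, $N$ is isomorphic to its induced structure via a map sending $v_i$, resp.\ $w_i$, to the induced core co-valuation, and composing these gives an isomorphism $f \colon M \to N$ with $f[v_i] = w_i$, in particular $f[v] = w$ (cf.\ Remark \ref{re:CoreIsomorphism}).

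The backbone is the recursive back-and-forth. I would fix once and for all cores $(u_i)$ of $V$ and $(t_i)$ of $W$, set $v_0 = v$, $w_0 = w$ (legitimate since $\tp_M(v) = \tp_N(w)$ is the hypothesis and the two share a common context), and maintain the invariant $\tp_M(v_n) = \tp_N(w_n)$ throughout. At an even step I would ensure progress for $V$: using directedness (Axiom (I)) pick $v_{n+1} \in V$ fragmenting both $v_n$ and the next generator $u_k$, and let $\geq \in \Val(X_n, X_{n+1})$ be the resulting pattern $\geq^{v_n}_{v_{n+1}}$. At an odd step I would do the symmetric thing on the $W$-side with $t_k$, constructing $w_{n+1}$ first. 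This bookkeeping guarantees that every $u_k$ (resp.\ $t_k$) is a consolidation of some $v_j$ (resp.\ $w_j$), so the two chains are genuine cores.

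The decisive step --- and the one I expect to be the main obstacle --- is the \emph{reflection} transporting a fragmentation from one model to the other while preserving the type. Suppose in an even step we have chosen $v_{n+1}$ with pattern $\geq$. Since $M$ is atomic, $q = \tp_M(v_{n+1})$ is an isolated type of $T$; let $\theta(X_{n+1})$ isolate it. Then $v_{n+1}$ witnesses $\exists^{\fr}_\geq \theta \in \tp_M(v_n)$, and by the invariant $\exists^{\fr}_\geq \theta \in \tp_N(w_n)$, so there is $w_{n+1} \in W$ with $w_n = C_\geq(w_{n+1})$ and $N \models_{w_{n+1}} \theta$. As $\theta$ isolates $q$ and $N \models T$, we obtain $\tp_N(w_{n+1}) = q = \tp_M(v_{n+1})$; moreover $w_{n+1}$ lives in the domain context $X_{n+1}$ of $\geq$ and is consolidated to $w_n$ with exactly the pattern $\geq$. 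This re-establishes all three parts of the invariant (same context, same pattern, same type), and the odd step is symmetric.

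The subtle points to check carefully are that the isolating formula governs the \emph{whole} complete type in \emph{every} model of $T$ --- so that equality of the full types, not merely of the witnessing formula, is recovered --- and that the context and pattern produced by $\exists^{\fr}_\geq$ coincide on the nose with those used on the $v$-side; the former is precisely the definition of an isolated type, while the latter is built into the semantics of $\exists^{\fr}_\geq$. Finally, Theorem \ref{th:SequenceDetermines} is what lets the level-by-level agreement of satisfaction propagate, confirming that $(v_i)$ and $(w_i)$ induce the very same relational data on $\PPb$ and thereby closing the argument.
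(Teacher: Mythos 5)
Your proposal is correct and follows essentially the same route as the paper: a back-and-forth construction of matching cores $(v_i)$, $(w_i)$ starting from $v_0=v$, $w_0=w$, interleaving the generators of $V$ and $W$ for cofinality, transporting each fragmentation step across via the formula $\exists^{\fr}_{\geq}\theta$ where $\theta$ isolates the (necessarily isolated, by atomicity) type of $v_{n+1}$, and concluding with Remarks \ref{re:SpectraAreEnough} and \ref{re:CoreIsomorphism}. In fact you spell out the key reflection step --- why a $w_{n+1}$ with the same pattern and the same complete type exists --- more explicitly than the paper does.
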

	
\begin{proof}
	By Remark \ref{re:SpectraAreEnough}, we can assume that $M= \Sp \PPb$, $N=\Sp \RRb$, where $\PPb$, $\RRb$ are $\omega$-chains in $V$, $W$ induced by cores $(v'_i)$, $(w'_i)$, respectively. We  construct sequences of co-valuations $v_i \in V^{X_i}$, $w_i \in W^{X_i}$ with the same type. Let $v_0=v$, $w_0=w$, and suppose that $v_i$, $w_i$, $i \leq n$, have been already constructed for some  $n=2k$. Let $v_{n+1} \in V^{X_{n+1}}$ be an admissible co-valuation fragmenting $v_n$ and $v'_{k}$. Then there exists $w_{n+1} \in W^{X_{n+1}}$ that fragments $w_n$ with pattern $\geq^{v_n}_{v_{n+1}}$, and that has the same type as $v_{n+1}$. Analogously, switching the roles of $v_n$, $w_n$, we construct $w_{n+1}$, $v_{n+1}$, for odd $n$. Clearly, $(v_i)$, $(w_i)$ are cores of $V$, $W$, respectively, they induce the same ordering on $\bigsqcup X_i$, and $M \models_{v_i} R(\bar{x})$ iff $N \models_{w_i} R(\bar{x})$, for $R \in L$. By Remark \ref{re:SpectraAreEnough}, the mapping $v_i \mapsto w_i$ induces an isomorphism of $M$ and $N$. 
\end{proof}
	
	
	
We say that a theory is \emph{$\omega$-categorical} if it has a unique up to isomorphism compact $\omega$-model.

\begin{theorem}[a Ryll-Nardzewski theorem]
		Let $T$ be a theory in a countable signature. The following are equivalent.
		
		\begin{enumerate}
			\item $T$ is  $\omega$-categorical,
			\item every type of $T$ is isolated,
			\item $S_n(T)$ is finite for every $n \in \NN$.
		\end{enumerate}
	\end{theorem}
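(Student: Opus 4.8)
The plan is to prove the three conditions equivalent by establishing (2) $\Leftrightarrow$ (3) topologically, (1) $\Rightarrow$ (3) by a combination of omitting and realizing a bad type, and (2) $\Rightarrow$ (1) via uniqueness of atomic models. Throughout I take $T$ to be complete, as in the classical Ryll--Nardzewski theorem: this is harmless because an $\omega$-categorical $T$ is automatically complete (its models are all isomorphic, hence satisfy the same sentences), and completeness is genuinely needed in the implications producing a unique model.

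For (2) $\Leftrightarrow$ (3) I would argue entirely inside the space $S_n(T)$. It is compact by Corollary \ref{co:Compactness}, and each basic set $[\phi]$ is clopen with complement $[\neg\phi]$, so distinct complete types are separated by complementary clopen sets; thus $S_n(T)$ is a compact, Hausdorff, zero-dimensional space. By the remark following Theorem \ref{th:OM-TFAE}, a type $p \in S_n(T)$ is isolated exactly when it is an isolated point of $S_n(T)$. Hence (2) asserts that every point of every $S_n(T)$ is isolated, i.e. each $S_n(T)$ is discrete; a compact discrete space is finite, giving (3), and conversely a finite Hausdorff space is discrete, giving (2).

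For (1) $\Rightarrow$ (3) I proceed by contraposition. If some $S_n(T)$ is infinite, then being compact Hausdorff it is not discrete, so it contains a non-isolated point $p$, that is, a non-isolated $n$-type. By the omitting types theorem (Corollary \ref{co:OM}) there is a compact $\omega$-model $M_0 \models T$ omitting $p$. On the other hand, by the definition of $S_n(T)$ the type $p$ is realized in some model of $T$, and applying the downward L\"owenheim--Skolem theorem (Theorem \ref{th:LS}) to that model together with a co-valuation realizing $p$ yields a compact $\omega$-model $M_1 \models T$ that realizes $p$. Since isomorphisms map admissible co-valuations to admissible co-valuations preserving satisfaction, isomorphic structures realize the same types; as $M_1$ realizes $p$ while $M_0$ omits it, $M_0 \not\cong M_1$, so $T$ is not $\omega$-categorical.

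Finally, for (2) $\Rightarrow$ (1) I would note that if every type of $T$ is isolated, then every compact $\omega$-model of $T$ trivially realizes only isolated types and is therefore atomic. Given two compact $\omega$-models $(M,V)$ and $(N,W)$ of $T$, consolidating any admissible co-valuation into the one-variable context $\bar{*}$ produces admissible co-valuations $v_{*} \in V^{\bar{*}}$, $w_{*} \in W^{\bar{*}}$ with $[*]_{v_{*}} = M$ and $[*]_{w_{*}} = N$ (admissibility follows since $V$ and $W$ are closed under consolidations by contexts), and $\tp(v_{*})$, $\tp(w_{*})$ are exactly the complete theories of $M$ and $N$. By completeness of $T$ these coincide, so $\tp(v_{*}) = \tp(w_{*})$, and Theorem \ref{th:UniqueAtomic} then supplies an isomorphism of $M$ and $N$ sending $v_{*}$ to $w_{*}$; hence the compact $\omega$-model is unique, which is (1). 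I expect this last direction to be the main obstacle: the uniqueness theorem is phrased relative to a pair of co-valuations of equal type, so the real work is to produce one such pair shared by the two models, and the legitimacy of the ``top'' co-valuation $v_{*}$ carrying the full theory is precisely where completeness of $T$ enters.
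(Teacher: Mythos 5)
Your proof is correct and follows essentially the same route as the paper, which simply cites Theorem \ref{th:OM} for (2)$\Leftrightarrow$(3) and (1)$\Rightarrow$(2) and Theorem \ref{th:UniqueAtomic} for (2)$\Rightarrow$(1); your extra detail on producing the shared pair of top co-valuations $v_{*}, w_{*}$ in the context $\bar{*}$ is precisely how the uniqueness theorem gets applied there. Your caveat that completeness of $T$ is genuinely needed for (2)$\Rightarrow$(1) --- and is left implicit in the paper's statement --- is accurate and worth having made explicit.
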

	
	\begin{proof}
		The equivalence (2)$\Leftrightarrow$(3), as well as (1)$\Rightarrow$(2) follow from Theorem \ref{th:OM}. The implication (2)$\Rightarrow$(1) follows from Theorem \ref{th:UniqueAtomic}.
	\end{proof}
	
\section{\fra \  limits.}

Let us briefly present an extension of  \fra  \ theory developed in Sections 3 and 4.5 of \cite{BaBiVi2}. The reader is referred to this paper for more details on the original treatment.

For sets $A$, $B$, and a relation $R$ of arity $m$ defined both on $A$, and on $B$, we say that a relation $\sqsp \subseteq A \times B$  is \emph{$R$-preserving} if, for every $\bar{b} \in B^m$, we have that $R(\bar{b})$ and $\bar{a} \sqsp \bar{b}$ implies $R(\bar{a})$. It is \emph{$R$-surjective} if, for every $\bar{a} \in A^m$, $R(\bar{a})$ implies that there is $\bar{b} \in B^m$ such that $R(\bar{b})$, and $\bar{a} \sqsp \bar{b}$.  For graphs $(G,\sqcap)$, $(H,\sqcap)$, we say that $\sqsp \subseteq G \times H$ is \emph{edge-witnessing} if, for all $g, g' \in G$, $g \sqcap g'$ implies that there is $h \in H$ with $g,g' \sqsp h$. 

Let $L$ be a countable relational signature. Let $\SSf_L$ be the class of all finite structures in $L \cup \{\sqcap\}$, where $\sqcap$ is a binary relation symbol, such that $\sqcap^A$ is a reflexive graph, for every $A \in \SSf_L$.  We regard $\SSf_L$ as a category with morphisms $\sqsp$ from $B$ to $A$, $A,B \in \SSf_L$, defined as all relations  $\sqsp \subseteq A \times B$ that are are co-bijective, and  $R$-preserving, $R \in L \cup \{\sqcap\}$; composition of morphisms in $\SSf_L$ is composition of relations. In this section, we will consider only subcategories $\KKf \subseteq \SSf_L$ that are closed under isomorphism, and such that edge-witnessing morphisms are \emph{wide}, i.e., for every $A \in \KKf$, there is $B \in \KKf$, and an edge-witnessing morphism from $B$ to $A$.

\begin{remark}
The role of the graph relation $\sqcap$ is to control the structure of $\omega$-posets induced by sequences in subcategories of $\SSf_L$, as  discussed in \cite{BaBiVi2}. In particular, the category $\SSf_\emptyset$ is the category $\BBf$ described in Section 3 of \cite{BaBiVi2}.   

\end{remark}


Let  $\KKf \subseteq \SSf_L$ be a category. We write $\KKf^A_B$ for the collection of all morphisms in $\KKf$ from $B$ to $A$. We say that $\KKf$ has \emph{amalgamation} if for any $A,B_0,B_1 \in \KKf$, and $\sqsp_0 \in \KKf^A_{B_0}$, $\sqsp_1 \in \KKf^A_{B_1}$, there are $D \in \KKf$, and $\sqni_0 \in \KKf^{B_0}_D$, $\sqni_1 \in \KKf^{B_1}_D$ such that $\sqsp_0 \circ \sqni_0=\sqsp_1 \circ \sqni_1$. A sequence $(C_i, \sqsp_i)$ in $\KKf$ is \emph{co-initial} if for every $C \in \KKf$ there are $i \in \NN$ and $\sqsp \in \KKf^C_{C_i}$. It is \emph{absorbing} if for every $i \in \NN$, $C \in \KKf$, and $\sqsp \in \KKf^{C_i}_C$ there are $j \in \NN$, and $\sqni \KKf^C_{C_j}$ such that $\sqsp \circ \sqni=\sqsp^i_j$. It is \emph{\fra} if it is co-initial, and absorbing.  One can easily show (see, e.g., \cite[Proposition 4.30]{BaBiVi2}) that $\KKf$ is directed and has amalgamation iff there exists a \fra \  sequence in $\KKf$.

As before, every sequence $(\KKb_i, \sqsp_i)$ in $\KKf$ induces an  $\omega$-chain poset $\KKb=(\bigsqcup \KKb_i, \sqsp)$. We can consider the induced compact $\omega$-structure $(\Sp \KKb, V )$ in signature $L$, where $V$ is generated by co-valuations $v_i \in \Cov(\KKb_i, M)$ determined by covers $\KKb^\in_i$, $i \in \NN$, and, for $R \in L$ of arity $m$, we define

\[R^{M}(S_1, \ldots, S_m) \iff R^{\KKb_i}(x_1, \ldots, x_m)\]
for all $i \in \NN$, and $x_k$, $k \leq m$, such that $x_k \in S_k$. Clearly, all $R^M$ are closed in $\tau_V$. We call $(\KKb_i, \sqsp_i)$ \emph{$L$-faithful} if the structure $S(v_i)=\KKb_i$, $i \in \NN$. If $(\KKb_i, \sqsp_i)$ is a \fra \ sequence, we call $( \Sp \KKb,V)$ a  \emph{\fra \  limit} of $\KKf$. 

Note that because edge-witnessing morphisms are assumed to be wide in $\KKf$, and, by Proposition \ref{pr:LevelMin-CovMin}, $p^\in \neq \emptyset$, for $p \in \KKb$, we have that the graph $G(v_i)$ is equal to the reduct of $\KKb_i$ to $\{\sqcap\}$ for every \fra \ sequence  $(\KKb_i, \sqsp_i)$. The following fact can be proved in the same way as Proposition \ref{pr:LevelMin-CovMin}.

\begin{proposition}
\label{pr:PiRSurjective}
Let $L$ be a countable signature, and let $\KKf \subseteq \SSf_L$ be a category. If $\pi_{\sqsp}$ is $R$-surjective for all morphisms $\sqsp$ in $\KKf$,  and $R \in L$,  then every sequence in $\KKf$ is $L$-faithful.
\end{proposition}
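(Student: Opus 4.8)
The plan is to unwind the definition of $L$-faithfulness and reduce it to a level-by-level comparison of the induced relations. Since $S(v_i)$ and $\KKb_i$ share the underlying set $\KKb_i$, and their $\sqcap$-reducts coincide by the observation preceding the proposition (the containment $\sqcap^{S(v_i)} \subseteq \sqcap^{\KKb_i}$ being automatic from $\sqcap$-preservation of the morphisms together with the filter property of minimal selectors), faithfulness reduces to proving, for every $R \in L$ of arity $m$, every $i \in \NN$, and every tuple $\bar{x}$ in $\KKb_i$, the equivalence
\[ M \models_{v_i} R(\bar{x}) \iff R^{\KKb_i}(\bar{x}). \]
The implication from left to right is immediate and uses none of the hypotheses: if $M \models_{v_i} R(\bar{x})$ there is a tuple $\bar{S}$ of points with $S_k \in x_k^\in$ and $R^M(\bar{S})$, and evaluating the defining clause of $R^M$ at level $i$, with the level-$i$ witnesses $x_k \in S_k$, yields $R^{\KKb_i}(\bar{x})$.

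For the converse --- the only place the hypothesis enters --- I would argue exactly as in Proposition \ref{pr:LevelMin-CovMin}. Assume $R^{\KKb_i}(\bar{x})$ and put $\bar{x}^{0}=\bar{x}$. Using that $\pi_{\sqsp_{i+n}}$ is $R$-surjective, I construct inductively tuples $\bar{x}^{n} \in \KKb_{i+n}^{m}$ with $R^{\KKb_{i+n}}(\bar{x}^{n})$ and $x^{n}_k \mathrel{\pi_{\sqsp_{i+n}}} x^{n+1}_k$ for each $k$; this purity condition says precisely that $x^{n+1}_k$ lies below no level-$(i+n)$ element other than $x^{n}_k$. As in Proposition \ref{pr:LevelMin-CovMin}, each $\{x^{n}_k\}^{\leq}$ is then a selector, since it meets every level and hence every cap by Remark \ref{re:LevelsEnough}, so by Lemma \ref{le:SelectorFilterAndContainsMin} it contains a minimal selector $S_k$. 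Telescoping the purity condition shows that $S_k \cap \KKb_{i'}=\{x^{i'-i}_k\}$ for every $i' \geq i$; in particular $S_k \in x_k^\in$.

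It remains to verify that $\bar{S}$ witnesses $R^M(\bar{S})$, and this is the step I expect to be the main obstacle, because $R^M$ quantifies over \emph{all} levels and all cell-tuples containing $\bar{S}$. On the finer levels $i' \geq i$ the threads pin each $S_k$ down to the single element $x^{i'-i}_k$, so the only relevant tuple is $\bar{x}^{i'-i}$, for which $R$ holds by construction. On the coarser levels $i' < i$, any tuple $\bar{y}$ with $y_k \in S_k \cap \KKb_{i'}$ satisfies $y_k \geq x_k$: using downward directedness of the filter $S_k$ one finds a thread element below $y_k$, and telescoping the purity upward forces $y_k \geq x^{0}_k = x_k$. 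Thus $\bar{y} \mathrel{\sqsp^{i'}_i} \bar{x}$, and since the connecting morphism is $R$-preserving and $R^{\KKb_i}(\bar{x})$ holds, we obtain $R^{\KKb_{i'}}(\bar{y})$. Hence $R^M(\bar{S})$ holds, so $M \models_{v_i} R(\bar{x})$, which completes the equivalence and the proof. The two delicate points are the telescoping that identifies $S_k$ with the thread on the finer levels and the appeal to $R$-preservation of the morphisms on the coarser ones; everything else is the bookkeeping already carried out in Proposition \ref{pr:LevelMin-CovMin}.
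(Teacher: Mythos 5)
Your proof is correct and follows exactly the route the paper intends: the paper's proof is just the one-line remark that the statement ``can be proved in the same way as Proposition \ref{pr:LevelMin-CovMin}'', and you carry out precisely that argument, building threads via $R$-surjectivity of $\pi_{\sqsp}$ in place of co-injectivity, extracting minimal selectors, and using $R$-preservation of the connecting morphisms to handle the coarser levels.
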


It is known (see \cite[Proposition 4.32]{BaBiVi2}) that any two \fra \ sequences in a category $\KKf \subseteq \SSf_\emptyset$ have homeomorphic spectra. It turns out that more can be said about them in the context of compact $\omega$-structures. The next two theorems indicate that for a second-countable compact $T_1$ space $X$ equipped with closed relations in signature $L$ that is topologically isomorphic to a \fra \ limit of a category $\KKf \subseteq \SSf_L$,  the (unique up to isomorphism) $\omega$-structure induced by an $L$-faithful  \fra \ sequence in $\KKf$ can be thought of as a canonical $\omega$-structure associated with $X$. 

\begin{remark}
Observe that if $w=C_\geq(v)$, then $\geq$ is a morphism from $S(v)$ to $S(w)$ in $\SSf_L$, i.e., it preserves all $R \in L$, and the non-empty intersection graph relation. 
\end{remark}

\begin{lemma}
	\label{le:TypesIso}
Let $L$ be a countable signature. Let $(M,V)$ be a \fra \ limit of an $L$-faithful \fra \  sequence $(\KKb_i, \sqsp_i)$ in a category $\KKf \subseteq \SSf_L$. Then for every $v \in V$ such that $S(v) \in \KKf$, $D \in \KKf$,  and $\sqsp \in \KKf^{S(v)}_{D}$, there are $j \in \NN$, and $\sqni \in \KKf^{D}_{\KKb_j}$ such that $C_{\sqni} (\KKb^\in_j)$ fragments $v$ with pattern $\sqsp$.
\end{lemma}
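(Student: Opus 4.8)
The plan is to realize $\sqsp$ inside the defining sequence by a single amalgamation followed by one use of the absorbing property. Since $v \in V = \langle\{v_i\}\rangle$, I first fix $i$ and a pattern $\alpha$ with $v = C_\alpha(v_i)$; taking $\alpha = \geq^{v}_{v_i}$ and invoking the Remark preceding the lemma, $\alpha$ is a morphism $\KKb_i \to S(v)$ in $\SSf_L$ (it preserves every $R\in L$ and the overlap graph, and is co-bijective). Recall that $v_i = C_{\sqsp^i_j}(v_j)$ for $j\ge i$, so that $v = C_{\alpha\circ\sqsp^i_j}(v_j)$ for all such $j$. Hence it suffices to produce $j\ge i$ and a $\KKf$-morphism $\sqni\in\KKf^{D}_{\KKb_j}$ with
\[ \sqsp\circ\sqni = \alpha\circ\sqsp^i_j, \]
for then $C_{\sqni}(v_j)$ fragments $v$ with pattern $\sqsp$: indeed $C_\sqsp(C_\sqni(v_j)) = (\sqsp\circ\sqni)\circ v_j = (\alpha\circ\sqsp^i_j)\circ v_j = \alpha\circ v_i = v$.

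To obtain such $\sqni$, I would amalgamate the co-span $\alpha:\KKb_i\to S(v)$ and $\sqsp:D\to S(v)$ inside $\KKf$ — this is where the hypothesis $S(v)\in\KKf$ is used, together with amalgamation in $\KKf$ (the standing feature behind the existence of a \fra \ sequence) — producing $E\in\KKf$ and morphisms $\beta\in\KKf^{\KKb_i}_E$, $\gamma\in\KKf^{D}_E$ with $\alpha\circ\beta=\sqsp\circ\gamma$. Then I apply the absorbing property of $(\KKb_i,\sqsp_i)$ to $\beta\in\KKf^{\KKb_i}_E$, which yields $j\ge i$ and $\delta\in\KKf^{E}_{\KKb_j}$ with $\beta\circ\delta=\sqsp^i_j$. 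Setting $\sqni=\gamma\circ\delta\in\KKf^{D}_{\KKb_j}$, I compute
\[ \sqsp\circ\sqni=\sqsp\circ\gamma\circ\delta=\alpha\circ\beta\circ\delta=\alpha\circ\sqsp^i_j, \]
which is exactly the required identity. The rest is the bookkeeping of the previous paragraph.

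The hard part will be the passage from $\SSf_L$ to $\KKf$ in the very first step: the pattern $\alpha$ that realizes $v$ from the level $v_i$ is only guaranteed to be a morphism of $\SSf_L$, whereas both the amalgamation and the absorption must happen in the possibly non-full subcategory $\KKf$. I would resolve this using co-initiality and $L$-faithfulness. By co-initiality applied to $S(v)\in\KKf$ there is a genuine $\KKf$-morphism $\eta:\KKb_k\to S(v)$, and $L$-faithfulness together with $G(v_k)=(\KKb_k,\sqcap)$ forces $C_\eta(v_k)$ to be a co-valuation with $S(C_\eta(v_k))=S(v)$; the homogeneity of the \fra \ limit then conjugates the problem for $v$ to the problem for $C_\eta(v_k)$, whose realizing pattern is the $\KKf$-morphism $\eta$, after which the amalgamation–absorption argument applies and the witness is transported back. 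An alternative route exploits slack: the identity $\sqsp\circ\sqni=\alpha\circ\sqsp^i_j$ is needed only after composing with $v_j$, that is, as covers of $M$, so it is enough that $\sqsp\circ\sqni$ and $\alpha\circ\sqsp^i_j$ induce the \emph{same} consolidation of $v_j$ — a weaker demand that can absorb the discrepancy between $\KKf$- and $\SSf_L$-morphisms. Verifying that one of these routes places $\alpha$ (or an adequate replacement) genuinely in $\KKf$ is the crux; once that is settled, the amalgamation and absorption steps are routine.
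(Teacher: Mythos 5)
Your argument is essentially the paper's own proof: the paper likewise amalgamates $\sqsp$ with the pattern $\geq_1$ realizing $v$ as a consolidation of some $\KKb^\in_i$, and then uses absorption (packaged there as the base case $v=\KKb^\in_i$ via Corollary \ref{co:IsomorphicPosets}) to pull the amalgam back into the sequence, arriving at the same identity $\sqsp\circ\sqni=\geq_1\circ\sqsp^i_j$ and the same final computation. The difficulty you flag at the end --- that the realizing pattern is a priori only an $\SSf_L$-morphism while the amalgamation must take place in $\KKf$ --- is present in the paper's proof as well, which applies amalgamation to this pattern without comment, so on this point your proposal is no less complete than the original.
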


\begin{proof}
	For the case that $v=\KKb_i^\in$, this follows from Corollary \ref{co:IsomorphicPosets}, i.e., from the fact that $\omega$-chains $(\bigsqcup \KKb_i, \sqsp)$ and $(\bigsqcup \KKb^\in_i, \supseteq)$  are isomorphic posets, and the assumption that $(\KKb_i, \sqsp_i)$ is $L$-faithful. Otherwise, we put $\geq_0=\sqsp$, and select $i \in \NN$ such that $\KKb^\in_i$ fragments $v$ with pattern $\geq_1$. Then we use amalgamation in $\KKf$ to obtain an amalgam $E \in \KKf$ of $v$, $\geq_0$, $\geq_1$, together with appropriate $\succeq_0$, $\succeq_1$. Finally, we apply the above case to $\KKb^\in_i$ and $\succeq_1$ to find $j \in \NN$ and $\sqni$ such that $S(C_{\sqni}(\KKb^\in_j))=E$. Because of the way $E$ has been chosen,  $S(C_{\succeq_0 \circ \sqni}(\KKb^\in_j))=D$, and $C_{\succeq_0 \circ \sqni}(\KKb^\in_j)$ fragments $v$ with pattern $\sqsp$. 
	
\end{proof}

\begin{theorem}
	\label{th:FraAreIso}
	Let $L$ be a countable signature.  Let $(M,V)$, $(M',V')$ be \fra \ limits of $L$-faithful \fra \  sequences $(\KKb_i, \sqsp_i)$, $(\KKb'_i, \sqsp'_i)$ in a category $\KKf \subseteq \SSf_L$. Then $(M,V)$, $(M',V')$ are isomorphic. In other words, their exists a unique \fra \ limit of an $L$-faithful  \fra \  sequence in $\KKf$.
\end{theorem}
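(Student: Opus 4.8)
The plan is to run a back-and-forth argument between the two \fra\ sequences, using \fra ness (specifically, absorption and co-initiality, packaged through amalgamation) to build a common refinement that witnesses an isomorphism. The key structural tool is Lemma \ref{le:TypesIso}, which lets me realize any prescribed morphism out of $S(v)$ as a fragmentation by some level of the \emph{other} sequence; this is precisely the device that makes the zig-zag steps go through on both sides. The end goal is to produce cores $(v_i)$ of $V$ and $(w_i)$ of $V'$, indexed by a common chain of contexts, such that $S(v_i)=S(w_i)$ as $L\cup\{\sqcap\}$-structures, the induced orderings on $\bigsqcup [v_i]$ and $\bigsqcup [w_i]$ coincide, and the fragmentation patterns match at each step; Remark \ref{re:CoreIsomorphism} then yields the desired isomorphism.

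First I would set up the back-and-forth. By $L$-faithfulness, $S(\KKb_i^\in)=\KKb_i\in\KKf$ and $S((\KKb'_i)^\in)=\KKb'_i\in\KKf$, so the two sequences live in the same category and every level is a genuine object of $\KKf$. Start with $v_0=\KKb_0^\in$, $w_0=(\KKb'_0)^\in$; using co-initiality I can pass to a common object and arrange $S(v_0)=S(w_0)$ (replacing $\KKb_0$, $\KKb'_0$ by later levels if necessary, or amalgamating). Inductively, suppose $v_n\in V$, $w_n\in V'$ have been constructed with $S(v_n)=S(w_n)=:C_n\in\KKf$, matching patterns so far. On even steps I fragment on the $V$ side: choose $v_{n+1}\in V$ fragmenting $v_n$ (with pattern $\sqsp_n$) and fine enough to absorb the next level $\KKb_{k}^\in$ of the first sequence, so that $(v_i)$ will exhaust a core. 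Set $D=S(v_{n+1})\in\KKf$ and let $\sqsp_n\in\KKf^{C_n}_{D}$ be the induced morphism (by the Remark preceding Lemma \ref{le:TypesIso}, $\sqsp_n$ is indeed a morphism in $\SSf_L$, and it lies in $\KKf$ since $D,C_n\in\KKf$). Now apply Lemma \ref{le:TypesIso} to $w_n$ (with $S(w_n)=C_n$), the object $D$, and the morphism $\sqsp_n$: this produces $j\in\NN$ and $\sqni\in\KKf^{D}_{\KKb'_j}$ such that $w_{n+1}:=C_{\sqni}((\KKb'_j)^\in)$ fragments $w_n$ with pattern $\sqsp_n$ and satisfies $S(w_{n+1})=D=S(v_{n+1})$. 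On odd steps I swap the roles of the two sides, fragmenting on the $V'$ side and using Lemma \ref{le:TypesIso} applied to $v_n$ to match; interleaving the two parities guarantees that $(v_i)$ is a core of $V$ and $(w_i)$ is a core of $V'$.

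It remains to check that $v_i\mapsto w_i$ induces an isomorphism. Since $S(v_i)=S(w_i)$ at every stage and the fragmentation patterns $\sqsp_i$ are common to both chains, the posets $\bigsqcup [v_i]$ and $\bigsqcup [w_i]$, ordered by inclusion, are isomorphic $\omega$-chains via the canonical level-wise bijections; here I would invoke Corollary \ref{co:IsomorphicPosets} to identify each level with its image in the spectrum and Proposition \ref{le:GoodBehavior} to confirm that the inclusion ordering is recovered from the abstract ordering. The equality $S(v_i)=S(w_i)$ in the full signature $L\cup\{\sqcap\}$ gives both that the overlap graphs agree (so the orderings on consecutive levels match) and that $M\models_{v_i}R(\bar x)\iff M'\models_{w_i}R(\bar x)$ for every $R\in L$. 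Thus the induced bijection $\Sp\PPb\to\Sp\RRb$ between the two spectra preserves relations and carries $(v_i)$ to $(w_i)$; by Remark \ref{re:CoreIsomorphism} it is an isomorphism of the two $\omega$-structures, and Remark \ref{re:SpectraAreEnough} identifies each structure with the spectral model, completing the proof.

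I expect the main obstacle to be the bookkeeping that makes the zig-zag \emph{simultaneously} exhaustive on both sides while keeping $S(v_i)=S(w_i)$ exactly, rather than merely up to isomorphism. The subtlety is that Lemma \ref{le:TypesIso} gives a fragmentation by \emph{some} level $\KKb'_j$ with the prescribed induced structure, but I must ensure that the chosen $v_{n+1}$ on one side absorbs the appropriate level of its own sequence (so that $(v_i)$ is genuinely a core), and that the matching $w_{n+1}$ can be taken fine enough on subsequent steps to absorb the next level of $(\KKb'_i)$; this is exactly where the absorbing property of \fra\ sequences is used, and care is needed so that the two exhaustion requirements do not conflict. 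Once the alternation is organized correctly, each individual matching step is a direct application of Lemma \ref{le:TypesIso}.
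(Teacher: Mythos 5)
Your proposal is correct and follows essentially the same route as the paper: a back-and-forth construction of cores $(v_i)$, $(w_i)$ with $S(v_i)=S(w_i)$ and matching fragmentation patterns, where each zig-zag step is supplied by Lemma \ref{le:TypesIso}, the core property is secured by forcing $v_{n+1}$ (resp.\ $w_{n+1}$) to fragment the next level of its own sequence, and the conclusion follows from Remark \ref{re:CoreIsomorphism}. The only cosmetic difference is the base case (the paper starts from the trivial co-valuations $v_0=\{M\}$, $w_0=\{N\}$ rather than the zeroth levels), which does not affect the argument.
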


\begin{proof}
We construct isomorphic cores $(v_{i})$, $(v'_{i})$ of $V$, $V'$ as follows. Let $v_0=\{M\}$, $v'_0=\{M'\}$, and suppose that $v_i$, $v'_i$, $i \leq n$, have been constructed for some $n=2k$. Let $v_{n+1} \in V$ be a co-valuation that fragments $\KKb^\in_{k}$, and fragments $v_n$ with pattern $\sqsp \in \KKf^{S(v_n)}_{S(v_{n+1})}$. There is such $v_{n+1}$ because $\KKf$ is directed. By Lemma \ref{le:TypesIso},  there is $v'_{n+1} \in W$ with $S(v'_{n+1})=S(v_{n+1})$ that fragments $v'_n$ with pattern $\sqsp$. For odd $n$, we proceed analogously, switching the roles of $v_n$ and $v'_n$.
\end{proof}

It is straightforward to construct a formula $\mbox{Structure}_K(X)$ that says that $S(v)$, where $v \in \Val(X,M)$, is isomorphic to a given structure $K$.

\begin{theorem}
\label{th:FraAtomic}
	Let $L$ be a countable signature. 
	Assume that $(\KKb_i, \sqsp_i)$ is an $L$-faithful \fra \  sequence in a category $\KKf \subseteq \SSf_L$. Then its \fra \  limit is an atomic $\omega$-structure. 
\end{theorem}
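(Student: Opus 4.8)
The plan is to derive atomicity from the uniqueness theorem \ref{th:FraAreIso} through the criterion of Corollary \ref{co:MiniSuzuki}, rather than by exhibiting an isolating formula directly. Write $(M,V)$ for the \fra\ limit under consideration and put $T=\Th(M)$. By Corollary \ref{co:MiniSuzuki} it is enough to produce a \emph{countable} family $F$ of types such that (i) $M$ omits $F$, and (ii) every compact $\omega$-model of $T$ that omits $F$ is isomorphic to a \fra\ limit of an $L$-faithful \fra\ sequence in $\KKf$. Indeed, once (ii) holds, Theorem \ref{th:FraAreIso} shows that any two compact $\omega$-models of $T$ omitting $F$ are isomorphic to one another, so the hypothesis of Corollary \ref{co:MiniSuzuki} is met, and that corollary then yields that every such model --- in particular $M$ itself, by (i) --- is an atomic model of $T$.

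First I would record what comes for free from $T$. Since $(\KKb_i,\sqsp_i)$ is co-initial, every $K\in\KKf$ occurs as $S(v)$ for some $v\in V$, so each sentence $\exists_X\mbox{Structure}_K(X)$ belongs to $T$. Moreover, Lemma \ref{le:TypesIso} says precisely that $M$ satisfies the extension axioms
\[ \forall_X\bigl[\mbox{Structure}_K(X)\rightarrow\exists^{\fr}_{\sqsp}\mbox{Structure}_D(Y)\bigr], \qquad K,D\in\KKf,\ \sqsp\in\KKf^{K}_{D}, \]
which therefore lie in $T$ and hold in every $N\models T$. Thus realization of $\KKf$-objects (co-initiality) and genericity (absorption) are automatic in any model of $T$; crucially, the extension axioms provide fragmentations whose connecting patterns are genuine $\KKf$-morphisms. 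Note also that $L$-faithfulness of the sequence reconstructed from a core $(w_i)$ with $S(w_i)\in\KKf$ is automatic: setting $\KKb_i:=S(w_i)$ and reconstructing $N$ as $\Sp(\bigsqcup S(w_i))$ via Remark \ref{re:SpectraAreEnough} returns $v_i=w_i$, whence $S(v_i)=S(w_i)=\KKb_i$.

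What $F$ must force is that the $\KKf$-structured co-valuations of a model are rich enough to recover it from $\KKf$. For each context $X$ I would take the type
\[ G_X=\bigl\{\,\neg\,\exists^{\fr}_{\sqsp}\mbox{Structure}_D(Y): D\in\KKf,\ \sqsp\in\Val(X,Y)\,\bigr\} \]
and set $F=\{G_X: X\text{ a context}\}$. A co-valuation satisfies $G_X$ exactly when it admits \emph{no} fragmentation whose induced structure lies in $\KKf$. In $M$ every $u\in V$ is a consolidation of some $\KKb_j^\in$ and hence is fragmented by $\KKb_j^\in$, which has structure $\KKb_j\in\KKf$; so $M$ omits every $G_X$, giving (i). Conversely, if a compact $\omega$-model $N=(N,W)\models T$ omits all $G_X$, then the $\KKf$-structured co-valuations are cofinal in $W$ under fragmentation. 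Combining this cofinality with directedness (Axiom~(I)), the realization sentences, and the extension axioms, I would run the standard \fra\ back-and-forth bookkeeping \emph{inside} $N$ to build a chain $(w_i)$ with $S(w_i)\in\KKf$, using the extension axioms at each step so that the connecting patterns are $\KKf$-morphisms and the chain is co-initial and absorbing, while using the omission of $G_X$ to guarantee that the chain stays cofinal, i.e.\ is a genuine core of $W$. By the remarks above, $(\KKb_i:=S(w_i),\,\text{patterns})$ is then an $L$-faithful \fra\ sequence in $\KKf$ with $\Sp(\bigsqcup S(w_i))\cong N$, establishing (ii).

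The main obstacle is exactly this extraction step: reconciling the demand that the chain move only along $\KKf$-morphisms (needed so that it is a sequence in $\KKf$ and Theorem \ref{th:FraAreIso} applies) with the demand that it be a \emph{core} of $W$ (so that $N$ is genuinely recovered as its spectrum). One cannot force pattern-coherence by omitting types --- $M$ itself carries consolidations with patterns outside $\KKf$, so any such $F$ would exclude $M$ --- and so the coherence must instead be produced constructively from the extension axioms, with the omission of $F$ supplying only the cofinality that keeps the $\KKf$-morphism refinements from stalling before they generate all of $W$. Verifying that these two requirements can always be met simultaneously (and, where $\pi_{\sqsp}$ fails to be $R$-surjective, that $L$-faithfulness of the reconstructed levels is not lost, cf.\ Proposition \ref{pr:PiRSurjective}) is the technical heart of the argument; once it is in place, the conclusion is immediate from Theorem \ref{th:FraAreIso} and Corollary \ref{co:MiniSuzuki}.
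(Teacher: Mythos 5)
Your proposal is correct and follows essentially the same route as the paper: the same family of types (your $G_X$ are the paper's $D_n$), the same appeal to the expressibility of Lemma \ref{le:TypesIso} as sentences of $T$, the same back-and-forth extraction of a $\KKf$-structured core from a model omitting those types, and the same final appeal to Corollary \ref{co:MiniSuzuki}. The only (immaterial) difference is that the paper builds the core so that its induced sequence is isomorphic to the given $(\KKb_i,\sqsp_i)$ and concludes via Remark \ref{re:SpectraAreEnough}, whereas you build an arbitrary $L$-faithful \fra\ sequence and then invoke Theorem \ref{th:FraAreIso}.
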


\begin{proof}
Let $D_n$, $n \in \NN$, be types defined by
\[ D_n(X_n)=\{ \neg \exists^{\fr}_{\geq} \text{Structure}_K(Y): K \in \KKf, \, \geq \in \Val(X_n,Y) \}.\]
Let $(N,W)$ a compact $\omega$-model of the theory $T$ of the \fra \ limit $(M,V)$ of $(\KKb_i, \sqsp_i)$ that omits all the types $D_n$. Then co-valuations $w \in W$ such that $S(w) \in \KKf$ are co-initial in $W$.  We observe that the statement of Lemma \ref{le:TypesIso} can be expressed as an infinite conjunction of sentences in the logic of co-valuations, i.e., it is implied by $T$. Now, one easily constructs a core $(w_i)$ of $W$ such that $(S(w_i), \geq^{w_i}_{w_{i+1}})$ is isomorphic with $(\KKb_i, \sqsp_i)$. By Remark \ref{re:SpectraAreEnough}, $(N,W)$ is isomorphic with $(M,V)$. Finally, we apply Corollary \ref{co:MiniSuzuki}.

\end{proof}

Let $\IIf \subseteq \SSf_\emptyset$ be the category of all linear  graphs with monotone morphisms, i.e., morphisms $\sqsp \subseteq G \times H$ such that $A^\sqsp$ is connected, provided that $A$ is connected. Let $\PPf \subseteq \SSf_\emptyset$ be the category of all linear graphs with all morphisms. Let $\FFf \subseteq \SSf_\emptyset$ be the category of all fans, i.e., trees with exactly one vertex of degree $\geq 3$, called the root, and spoke-monotone morphisms, i.e., morphisms that are monotone on every maximal path with the root as an end-point.  All these categories are directed and have amalgamation (see \cite[Proposition 5.13]{BaBiVi2},\cite[Theorem 4.14]{BiMa}, and  \cite[Proposition 5.29]{BaBiVi2}), and their respective \fra \  limits are homeomorphic to the arc, the pseudo-arc, and the Lelek fan. Thus, we get
 
\begin{corollary}
The arc, the pseudo-arc, and the Lelek fan can be presented as atomic $\omega$-structures.
\end{corollary}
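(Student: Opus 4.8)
The plan is to obtain the corollary as an immediate application of Theorem \ref{th:FraAtomic} to each of the three categories $\IIf$, $\PPf$, $\FFf \subseteq \SSf_\emptyset$, so that the only work is to verify that each of them meets the hypothesis of that theorem, namely the existence of an $L$-faithful \fra \ sequence. All of the genuinely geometric content -- directedness, amalgamation, and the identification of the resulting spectra -- is already imported from \cite{BaBiVi2} and \cite{BiMa} in the preceding paragraph, so the argument is a matter of assembling these ingredients.

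First I would record that each of $\IIf$, $\PPf$, $\FFf$ is directed and has amalgamation, citing \cite[Proposition 5.13]{BaBiVi2}, \cite[Theorem 4.14]{BiMa}, and \cite[Proposition 5.29]{BaBiVi2} respectively. By the existence result stated in this section (\cite[Proposition 4.30]{BaBiVi2}), each of these categories therefore contains a \fra \ sequence $(\KKb_i, \sqsp_i)$, and hence a well-defined \fra \ limit.

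Next I would check $L$-faithfulness. Since here the signature is empty, the hypothesis of Proposition \ref{pr:PiRSurjective} -- that $\pi_{\sqsp}$ be $R$-surjective for every morphism $\sqsp$ and every $R \in L$ -- holds vacuously, so every sequence in each of the three categories is automatically $L$-faithful. Equivalently, for $L=\emptyset$ faithfulness reduces to the identity of overlap graphs $S(v_i)=(\KKb_i, \sqcap)$, which is guaranteed for any \fra \ sequence by the standing assumption that edge-witnessing morphisms are wide together with the nonemptiness $p^\in \neq \emptyset$ furnished by Proposition \ref{pr:LevelMin-CovMin}. With both hypotheses in hand, Theorem \ref{th:FraAtomic} applies directly and yields that the \fra \ limit of each of $\IIf$, $\PPf$, $\FFf$ is an atomic $\omega$-structure.

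Finally, I would invoke the identifications recorded in the preceding paragraph: the \fra \ limits of $\IIf$, $\PPf$, $\FFf$ are homeomorphic to the arc, the pseudo-arc, and the Lelek fan, respectively. Because $L=\emptyset$ there are no relations to carry, so these homeomorphisms are exactly topological isomorphisms of $\omega$-structures, and each of the three spaces is thereby presented as an atomic $\omega$-structure. I do not expect a serious obstacle; the only point requiring care is confirming that $\IIf$, $\PPf$, $\FFf$ genuinely fall under the standing framework of this section -- in particular the wideness of edge-witnessing morphisms used above for faithfulness -- but this is part of the set-up of these categories rather than an additional hurdle.
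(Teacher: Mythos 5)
Your proposal is correct and follows exactly the route the paper intends: the corollary is an immediate application of Theorem \ref{th:FraAtomic} to $\IIf$, $\PPf$, $\FFf$, with directedness and amalgamation imported from the cited references and $L$-faithfulness automatic for $L=\emptyset$ via the wideness of edge-witnessing morphisms and Proposition \ref{pr:LevelMin-CovMin}. The paper gives no separate proof, treating it as immediate from the preceding paragraph, and your assembly of the ingredients matches that reasoning.
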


\begin{remark}
The proof of Theorem \ref{th:FraAtomic} gives that \fra \ limits are $\omega$-categorical relative to the class of structures that can be obtained from the respective category. For example, the atomic model of the pseudo-arc is the unique chainable $\omega$-compact model of its theory. Also, Proposition \ref{pr:OrderedArc} says that the ordered arc is the unique metrizable $\omega$-compact model of its theory.
\end{remark}

As a matter of fact, we can explicitely characterize $\omega$-structures homeomorphic with the arc that are  \fra \  limits of $\IIf$.  Recall that a \emph{taut} cover of a topological space $X$ is a cover $C$ such that  $\cl{c} \cap \cl{c'} \neq \emptyset$ implies $c \cap c' \neq \emptyset$, for $c,c' \in C$. We will say that a co-valuation $v$ is connected if every $x \in [v]$ is connected. In particular, connected open co-valuations on the arc are consist of open intervals. 
%


\begin{proposition}
\label{pr:ExcAreFra}
Let $(I,V)$ be a compact $\omega$-structure homeomorphic with the arc, and let $(v_i)$ be a core of $V$. The sequence $(G(v_i), \geq^{v_i}_{v_{i+1}})$ is a \fra \  sequence in $\IIf$ iff $(v_i)$ consists of connected taut chain-covers.
\end{proposition}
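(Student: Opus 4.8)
The plan is to prove the two directions of the biconditional separately, with the forward direction being the delicate one. The statement characterizes, among all cores of an $\omega$-structure $(I,V)$ homeomorphic to the arc, exactly those whose induced sequence $(G(v_i),\geq^{v_i}_{v_{i+1}})$ is a \fra{} sequence in the category $\IIf$ of linear graphs with monotone morphisms. I would first pin down the standing assumptions: since $(I,V)$ is homeomorphic to the arc, the spectrum $\Sp\PPb$ of the $\omega$-chain $\PPb=\bigsqcup[v_i]$ is the interval, and by Corollary \ref{co:IsomorphicPosets} and Proposition \ref{pr:HomeoSpectrum} the posets $(\bigsqcup[v_i],\supseteq)$ and $(\bigsqcup[v_i]^\in,\supseteq)$ are order-isomorphic, so I may freely transport between the abstract covers and their topological realizations as open covers of the arc.

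For the easy direction, assume $(v_i)$ consists of connected taut chain-covers. I would verify that $(G(v_i),\geq^{v_i}_{v_{i+1}})$ lands in $\IIf$ and is \fra. Membership in $\IIf$ requires each $G(v_i)$ to be a linear graph (immediate from the chain-cover hypothesis) and each $\geq^{v_i}_{v_{i+1}}$ to be a monotone morphism, i.e.\ to send connected sets to connected sets; this is where connectedness of the $[x]_{v_i}$ is used, since the preimage-image of an interval under inclusion among intervals stays an interval. For the \fra{} property one must check co-initiality and absorption in $\IIf$. Co-initiality says every linear graph $C\in\IIf$ is dominated by some $G(v_i)$ via a monotone morphism; here I would invoke that the $[v_i]$ are cofinal among all open covers of the arc (part of the core/compactness data), refine an arbitrary open chain-cover by some $[v_i]$, and use tautness to upgrade a mere refinement to a monotone morphism. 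Absorption is the amalgamation-style extension property and follows similarly from directedness of $V$ together with the observation (made before Proposition \ref{pr:Pseudoarc}) that a consolidation of a chain-cover refining a chain-cover $C$ is again a chain-cover refining $C$.

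For the hard direction, assume $(G(v_i),\geq^{v_i}_{v_{i+1}})$ is a \fra{} sequence in $\IIf$ and deduce that each $v_i$ is a connected taut chain-cover. That each $[v_i]$ is a chain-cover is essentially the statement that $G(v_i)$ is linear, which is built into membership in $\IIf$. The substantive content is connectedness and tautness of the \emph{actual} open sets $[x]_{v_i}\subseteq I$, which are topological properties that the abstract \fra{} condition must force. The strategy is a uniqueness argument: by Theorem \ref{th:FraAreIso} any two \fra{} limits of $L$-faithful \fra{} sequences in $\IIf$ are isomorphic, and by the easy direction there \emph{exists} a core of some arc-structure consisting of connected taut chain-covers, whose induced sequence is \fra{} in $\IIf$. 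Thus the \fra{} limit of our sequence is isomorphic to one built from connected taut chain-covers. I would then argue that this isomorphism of $\omega$-structures carries the core $(v_i)$ to a core of connected taut chain-covers level by level, and since connectedness of each $[x]_{v_i}$ and tautness of each $[v_i]$ are invariants preserved by the topological isomorphism (they are homeomorphism-invariant properties of the open cover, detectable from $S(v_i)$ and its refinement behaviour), the original $(v_i)$ must already have consisted of connected taut chain-covers.

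The main obstacle is precisely this last transfer in the hard direction: the \fra{} hypothesis is purely combinatorial (it constrains the abstract graphs $G(v_i)$ and the morphisms between them), whereas connectedness and tautness are genuinely topological conditions on the sets $[x]_{v_i}$, and a priori a linear overlap graph $G(v_i)$ could be realized by non-connected pieces. What rescues the argument is that $I$ is \emph{already given} as the arc and that monotonicity of the morphisms in $\IIf$ is exactly the combinatorial shadow of mapping intervals to intervals; the key lemma to establish is that if $v_i$ fails to be connected or taut, then some $G(v_i)$ either fails to be linear or the morphism $\geq^{v_i}_{v_{i+1}}$ fails to be monotone, contradicting \fra{} membership. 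I expect this lemma — translating a topological defect of an individual cover into a combinatorial violation detectable by the category $\IIf$ — to require the most care, likely using tautness to connect the overlap graph of a cover to the overlap graph of its closure and Proposition \ref{pr:StarRefClosure} to control closures under $\rhd$-refinement.
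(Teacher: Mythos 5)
Your overall decomposition matches the paper's, but you have the weight of the two directions inverted, and the direction you dismiss as easy is where all of the paper's actual work lies. The implication from connected taut chain-covers to the \fra\ property is the substantive one: absorption demands that for every $v_i$ and every monotone co-bijective pattern $\geq \in \Val(X_{v_i},Y)$ one produce $j$ and a morphism $\sqni$ from $G(v_j)$ to $Y$ with the exact composition identity $\geq \circ \sqni = \geq^{v_i}_{v_j}$; equivalently, one must exhibit an admissible co-valuation $w$ with $G(w)=Y$ that \emph{fragments} $v_i$ with the prescribed pattern $\geq$, not merely refines it. Your sketch (``use tautness to upgrade a mere refinement to a monotone morphism,'' ``absorption follows from directedness of $V$ together with the observation that a consolidation of a chain-cover refining a chain-cover is again a chain-cover'') never addresses this. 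The paper's proof first builds an auxiliary connected taut chain-cover $C$ (not necessarily admissible) fragmenting $[v_i]$ with pattern $\geq$, takes $v_j \in V$ refining $C$ and fragmenting $v_i$, consolidates $v_j$ along $C$, and then runs an explicit interval-endpoint computation --- this is exactly where connectedness and tautness of the $[x]_{v_i}$ enter --- to show the resulting $w$ fragments $v_i$ with pattern $\geq$. That verification is the heart of the proposition and is absent from your proposal.

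For the converse the paper simply invokes the uniqueness of \fra\ limits, which is the same idea you propose, so your instinct there is sound; but your stated ``key lemma'' --- that a disconnected or non-taut $v_i$ would make some $G(v_i)$ non-linear or some $\geq^{v_i}_{v_{i+1}}$ non-monotone --- is not correct: a cover with a disconnected member can perfectly well have a linear overlap graph and monotone bonding relations, so membership in $\IIf$ does not detect the topological defect; any combinatorial obstruction would have to surface in the absorption property itself. The clean route is the one the paper takes (and you also mention): by Theorem \ref{th:FraAreIso} all \fra\ sequences in $\IIf$ have isomorphic limits, and the forward direction supplies a limit realized by connected taut chain-covers. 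Be careful, though, with the claim that the isomorphism ``carries the core $(v_i)$ to a core of connected taut chain-covers level by level'': the isomorphism of Theorem \ref{th:FraAreIso} is built by interleaving the two cores and does not match them level by level, and an admissible consolidation of a connected taut chain-cover need not itself be connected, so transferring connectedness and tautness back to the given $(v_i)$ requires an argument rather than an appeal to invariance.
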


\begin{proof}
Suppose that $(v_i)$ consists of connected taut chain-covers. Fix $v_i \in V^{X}$, and let $\geq \in \Val(X,Y)$ be monotone. It is an easy excersise to prove that there exists a connected taut chain-cover $C$ (not necessarily in $V$) that fragments $v$ with pattern $\geq$. Fix $v_j \in V$, where $j \geq i$, that refines $C$ with maximal pattern $\geq^C_{v_j}$, and fragments $v$. Consider $w=C_{\geq^C_{v_j}}(v_j)$. It obviously refines $v_i$ with  pattern $\geq$. We show that it also fragments $v_i$ with this pattern. Fix $z=(a_z,b_z) \in [w]$, and $x=(a_x,b_x) \in [v_i]$, $y=(a_y,b_y) \in C$  such that $x \supseteq z$, $y \supseteq z$. Suppose that $x \not \supset y$, i.e., $$a_y\leq a_z<b_z \leq b_x<b_y$$ or $$a_y <a_z \leq a_x< b_x \leq b_y.$$ We consider only the former case, the other one is analogous. Since $C$ fragments $[v_i]$, there must be $y'=(a_{y'},b_x) \in C$ with $$a_x \leq a_{y'}<a_y,$$ i.e., $x \supseteq y' \supseteq z$, and $x \geq y' \geq^C_w z$. As $w$ fragments $v_i$, this shows that every $x \in [v]$ is a union of $y \in C_{\geq^C_w}(w)$ with $x \geq y$. Thus, $(G(v_i), \geq^{v_i}_{v_{i+1}})$  is a \fra \  sequence.

The converse follows from the fact that any two \fra \  sequences have isomorphic limits.
\end{proof}

Finally, let $\LLf \subseteq \SSf_{\leq}$ be the category of all linear graphs with linear orders compatible with the graph relation. Using Proposition \ref{pr:PiRSurjective}, one immediately sees that every \fra \ sequence in $\LLf$ is $<$-faithful. Arguing as for $\IIf$, we get:

\begin{proposition}
Let $(I,\leq, V)$ be a compact $\omega$-structure topologically isomorphic with the ordered arc, and let $(v_i)$ be a core of $V$. The sequence $(S(v_i), \geq^{v_i}_{v_{i+1}})$ is a \fra \  sequence in $\LLf$ iff $(v_i)$ consists of connected taut chain-covers.

\end{proposition}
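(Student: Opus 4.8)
The plan is to follow the proof of Proposition~\ref{pr:ExcAreFra} almost verbatim, the only genuinely new ingredient being the bookkeeping forced by the order relation $\leq \in L$; this is exactly what is meant by ``arguing as for $\IIf$''. Throughout, recall that by Proposition~\ref{pr:PiRSurjective} every \fra \ sequence in $\LLf$ is automatically $<$-faithful, so $L$-faithfulness never has to be verified by hand, and Theorem~\ref{th:FraAreIso} is applicable.

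First I would treat the direction from right to left. Assume $(v_i)$ consists of connected taut chain-covers. Since each $v_i$ is connected, its members are open intervals of $I$, and since it is a chain-cover, $G(v_i)$ is a linear graph; the order relation recorded in $S(v_i)$ by the satisfaction clause for $\leq$ is then precisely the left-to-right order of these intervals. Tautness is what guarantees that two members with touching closures actually overlap, so that the recorded order is a genuine linear order compatible with the overlap graph, i.e.\ $S(v_i) \in \LLf$. The pattern $\geq^{v_i}_{v_{i+1}}$ is co-bijective and $\sqcap$-preserving exactly as in the $\IIf$ case, and it is $\leq$-preserving because consolidating intervals preserves their left-to-right position; hence each pattern is an $\LLf$-morphism, and $(S(v_i), \geq^{v_i}_{v_{i+1}})$ is a sequence in $\LLf$.

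To verify that this sequence is \fra, I would rerun the interval construction of Proposition~\ref{pr:ExcAreFra}. Given $v_i \in V^X$ and a morphism $\geq \in \KKf^{S(v_i)}_{K}$ in $\LLf$ --- which, being an $\LLf$-morphism, is in particular monotone and order-respecting --- one first produces a connected taut chain-cover $C$ that fragments $v_i$ with pattern $\geq$ and whose induced structure is $K$; the sole addition to the $\IIf$ argument is that the intervals of $C$ are arranged in the order prescribed by $K$, which is possible precisely because $\geq$ preserves $\leq$. Choosing $v_j$, $j \geq i$, that refines $C$ with maximal pattern $\geq^C_{v_j}$ and fragments $v_i$, and setting $w = C_{\geq^C_{v_j}}(v_j)$, the endpoint inequalities $a_y \leq a_z < b_z \leq b_x < b_y$ and the symmetric case are analysed verbatim as in Proposition~\ref{pr:ExcAreFra} to show that $w$ fragments $v_i$ with pattern $\geq$; since $S(w)=K$, this is the absorbing property. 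Co-initiality holds because the chain-covers $v_i$ become arbitrarily long, so every finite object of $\LLf$ is a consolidation of some $S(v_j)$.

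For the converse I would argue exactly as for $\IIf$. Applying the forward direction to a standard interval subdivision of $I$ produces a \fra \ sequence in $\LLf$ made of connected taut chain-covers, and Theorem~\ref{th:FraAreIso} shows that any two $L$-faithful \fra \ limits in $\LLf$ are isomorphic; as $\leq \in L$, this isomorphism is a topological isomorphism of ordered arcs, so if $(S(v_i), \geq^{v_i}_{v_{i+1}})$ is \fra \ its limit is this unique ordered arc and the connectedness and tautness are transported onto $(v_i)$. I expect the main obstacle to be the claim in the middle paragraph that the approximating chain-cover $C$ can be chosen simultaneously connected, taut, and order-compatible with the target $K$: confirming that the order constraint imposed by an $\LLf$-morphism never conflicts with the tautness and connectedness requirements is the one place where the argument really departs from the $\IIf$ case, and it is the step I would write out in full.
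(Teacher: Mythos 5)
Your proposal matches the paper's intent exactly: the paper gives no separate proof for this proposition, saying only "Using Proposition \ref{pr:PiRSurjective}, one immediately sees that every \fra\ sequence in $\LLf$ is $<$-faithful. Arguing as for $\IIf$, we get:" — i.e., it defers entirely to the proof of Proposition \ref{pr:ExcAreFra}, which is precisely the argument you rerun (including the endpoint-inequality analysis for absorption and the appeal to uniqueness of \fra\ limits for the converse). The one step you flag as needing care — choosing the intermediate chain-cover $C$ compatible with the prescribed order — is indeed the only genuinely new content, and your handling of it is consistent with what the paper implicitly assumes.
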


\end{document}